\DeclareMathOperator{\Inf}{Inf}
\DeclareMathOperator{\Stab}{Stab}
\DeclareMathOperator*{\Ex}{\mathbb{E}}
\DeclareMathOperator{\round}{round}
\providecommand{\NAND}{\mathsf{NAND}}
\providecommand{\highlight}[1]{\emph{\textbf{#1}}}
\newtheorem{theorem}{Theorem}[section]
\newtheorem{lemma}[theorem]{Lemma}
\theoremstyle{definition}
\newtheorem{definition}[theorem]{Definition}
\theoremstyle{remark}
\newtheorem{example}[theorem]{Example}
\newif\ifanon
\title{Approximate Polymorphisms of Predicates}
\author[1]{Anonymous}
\author[1]{Yaroslav Alekseev}
\author[1,2]{Yuval Filmus}
\affil[1]{Taub Faculty of Computer Science, Technion Israel Institute of Technology, Haifa, Israel}
\affil[2]{Faculty of Mathematics, Technion Israel Institute of Technology, Haifa, Israel}
\begin{document}

\maketitle

\begin{abstract}
A generalized polymorphism of a predicate $P \subseteq \{0,1\}^m$ is a tuple of functions $f_1,\dots,f_m\colon \{0,1\}^n \to \{0,1\}$ satisfying the following property: \emph{If $x^{(1)},\dots,x^{(m)} \in \{0,1\}^n$ are such that $(x^{(1)}_i,\dots,x^{(m)}_i) \in P$ for all $i$, then also $(f_1(x^{(1)}),\dots,f_m(x^{(m)})) \in P$}.

We show that if $f_1,\dots,f_m$ satisfy this property for \emph{most} $x^{(1)},\dots,x^{(m)}$ (as measured with respect to an arbitrary full support distribution $\mu$ on $P$), then $f_1,\dots,f_m$ are close to a generalized polymorphism of $P$ (with respect to the marginals of $\mu$).

Our main result generalizes several results in the literature:
\begin{itemize}
\item Linearity testing (Blum, Luby, and Rubinfeld): $P = \{(0,0,0),(0,1,1),(1,0,1),(1,1,0)\}$.
\item Quantitative Arrow theorems (Kalai; Keller; Mossel): $P = \{x \in \{0,1\}^3 : x \neq (0,0,0),(1,1,1)\}$.
\item Approximate intersecting families (Friedgut and Regev): $P = \{(0,0),(0,1),(1,0)\}$.
\item AND testing (Filmus, Lifshitz, Minzer, and Mossel): $P = \{(0,0,0), (0,1,0), (1,0,0), (1,1,1)\}$.
\item $f$-testing (Chase, Filmus, Minzer, Mossel, and Saurabh): $P = \{(x,f(x)) : x \in \{0,1\}^m\}$.
\end{itemize}
In particular, we extend linearity testing to arbitrary distributions.

We also extend our results to predicates on arbitrary finite alphabets in which all coordinates are ``flexible'' (for each coordinate $j$ there exists $w \in P$ such that $w^{j \gets \sigma} \in P$ for all $\sigma$).
\end{abstract}

\pagebreak

\setcounter{tocdepth}{2}
\tableofcontents

\pagebreak

\section{Introduction}
\label{sec:introduction}

The classical BLR linearity test~\cite{BLR} states that if $f\colon \{0,1\}^n \to \{0,1\}$ satisfies
$\Pr_{x,y \sim \mu_{1/2}}[f(x) \oplus f(y) = f(x\oplus y)] \geq 1-\epsilon$
then there exists a linear function $g\colon \{0,1\}^n \to \{0,1\}$ satisfying $\Pr_{x \sim \mu_{1/2}}[g(x) \neq f(x)] \leq \epsilon$; here $\mu_{1/2}$ is the uniform distribution over $\{0,1\}^n$. Linearity testing is a special case of a more general problem, \emph{approximate polymorphisms}, introduced in~\cite{CFMMS2022}, which unifies several other related results in the literature, including Kalai's quantitative Arrow theorem~\cite{Kalai} and AND~testing~\cite{FLMM2020}.

\begin{definition}[Polymorphism] \label{def:polymorphism}
A function $f\colon \{0,1\}^n \to \{0,1\}$ is a \emph{polymorphism} of a predicate $P \subseteq \{0,1\}^m$ if the following holds: \emph{Given any vectors $x^{(1)},\dots,x^{(m)} \in \{0,1\}^n$ such that $(x^{(1)}_i,\dots,x^{(m)}_i) \in P$ for all $i \in [n]$, also $(f(x^{(1)}), \dots, f(x^{(m)})) \in P$.}

Given a probability distribution $\mu$ on $P$, the function $f$ is a \emph{$(\mu,\epsilon)$-approximate polymorphism of $P$} if
\[
 \Pr_{\substack{(x^{(1)}_1,\dots,x^{(m)}_1) \sim \mu \\ \cdots \\ (x^{(1)}_n,\dots,x^{(m)}_n) \sim \mu}}[(f(x^{(1)}), \dots, f(x^{(m)})) \in P] \geq 1 - \epsilon.
\]
\end{definition}

Using this terminology, we can reformulate the BLR linearity test as follows:
\begin{theorem}[Linearity testing] \label{thm:BLR}
Let $P_{\oplus} = \{(a,b,a \oplus b) : a,b \in \{0,1\}\}$, and let $\mu_\oplus$ be the uniform distribution over $P_\oplus$.

If $f\colon \{0,1\}^n \to \{0,1\}$ is a $(\mu_\oplus,\epsilon)$-approximate polymorphism of $P_\oplus$ then there exists a polymorphism $g\colon \{0,1\}^n \to \{0,1\}$ of $P_\oplus$ such that $\Pr_{x \sim \mu_{1/2}}[g(x) \neq f(x)] \leq \epsilon$.
\end{theorem}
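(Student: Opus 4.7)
The plan is to prove this specific case via Fourier analysis over $\mathbb{F}_2^n$, which directly yields the sharp constant $\epsilon$. First I would switch to the $\{+1,-1\}$ encoding (sending $0 \mapsto +1$ and $1 \mapsto -1$), so that the condition $f(x^{(1)}) \oplus f(x^{(2)}) = f(x^{(3)})$ with $x^{(3)} = x^{(1)} \oplus x^{(2)}$ becomes $f(x) f(y) f(x \oplus y) = 1$ for $f \colon \{0,1\}^n \to \{+1,-1\}$, and the approximation hypothesis translates into
\[
  \Ex_{x,y \sim \mu_{1/2}} \bigl[ f(x) f(y) f(x \oplus y) \bigr] \geq 1 - 2\epsilon.
\]

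Next I would expand $f$ in the Walsh--Fourier basis $\{\chi_S\}_{S \subseteq [n]}$. Using the key identity $\chi_S(x)\chi_S(y) = \chi_S(x \oplus y)$ and orthonormality, the triple expectation collapses to $\sum_S \hat f(S)^3$, giving $\sum_S \hat f(S)^3 \geq 1 - 2\epsilon$. Combining this with Parseval's identity $\sum_S \hat f(S)^2 = 1$ and the elementary bound $\sum_S \hat f(S)^3 \leq \max_S \hat f(S) \cdot \sum_S \hat f(S)^2$, I conclude that some $S^\ast$ satisfies $\hat f(S^\ast) \geq 1 - 2\epsilon$.

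Finally, I would set $g = \chi_{S^\ast}$, which is a genuine polymorphism of $P_\oplus$ (an $\mathbb{F}_2$-linear function in the original encoding). The agreement bound is then immediate from $\hat f(S^\ast) = 1 - 2 \Pr_x[f(x) \neq g(x)]$, yielding $\Pr_x[f(x) \neq g(x)] \leq \epsilon$ exactly as required.

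This argument is admittedly very special to $P_\oplus$: it exploits the fact that characters of $\mathbb{F}_2^n$ are simultaneously orthonormal, $\{+1,-1\}$-valued, and multiplicative with respect to XOR. The main obstacle I foresee is precisely that essentially none of the listed generalizations (Arrow, AND, intersecting families, $f$-testing) enjoy all three structural features simultaneously, so this Fourier proof cannot be the template for the paper's general theorem; a broader approach will have to replace the character $\chi_{S^\ast}$ by some notion of ``dictator-like'' or ``junta-like'' structure extracted from the polymorphism hypothesis on the flexible predicate $P$.
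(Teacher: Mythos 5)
Your proposal is correct and matches the paper's own treatment: Theorem~\ref{thm:BLR} is stated as the classical BLR result, and the paper reproves the uniform case of its generalization (Theorem~\ref{thm:intro-blr}) in Section~\ref{sec:linearity-uniform} by the same Fourier-analytic argument following~\cite{BCHKS96}, of which your proof is the specialization to $m=3$, $b=0$, and $f_1=f_2=f_3=f$. Your closing observation that the character structure is special to $P_\oplus$ and cannot serve as the template for the general theorem also agrees with the paper, which instead builds its general argument around regularity lemmas and the ``It Ain't Over Till It's Over'' theorem.
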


Similarly, Keller's quantitative Arrow theorem~\cite{Keller2012} (which improves on Kalai's result) can be formulated as follows, in the special case of three candidates:
\begin{theorem}[Quantitative Arrow theorem]
\label{thm:keller-arrow}
Let $\mathrm{NAE}_3 = \{(a,b,c) \in \{0,1\}^3 : (a,b,c) \neq (0,0,0),(1,1,1)\}$, and let $\mu$ be the uniform distribution over $\mathrm{NAE}_3$.

If $f\colon \{0,1\}^n \to \{0,1\}$ is a $(\mu,\epsilon)$-approximate polymorphism of $\mathrm{NAE}_3$ then there exists a polymorphism $g\colon \{0,1\}^n \to \{0,1\}$ of $\mathrm{NAE}_3$ such that $\Pr_{x \sim \mu_{1/2}}[g(x) \neq f(x)] = O(\epsilon^{1/3})$.
\end{theorem}

In this paper, we extend \Cref{thm:BLR,thm:keller-arrow} to arbitrary predicates $P \subseteq \{0,1\}^m$ (for all $m$) and to arbitrary distributions $\mu$ on $P$ with full support.

One might hope for a result of the following form: \emph{Any approximate polymorphism of $P$ is close to a polymorphism of $P$.} While this holds for some predicates $P$, it fails for others, as the following counterexample from~\cite{CFMMS2022} demonstrates.

\begin{example}
Let $P_{\NAND} = \{(a, b, \overline{a \land b}) : a,b \in \{0,1\}\}$ and let $\mu_{\NAND}$ be the uniform distribution over $P_{\NAND}$. One can show that the only polymorphisms of $P_{\NAND}$ are dictators: $f(x) = x_i$.

For large $n$, let $f\colon \{0,1\}^n \to \{0,1\}$ be the following function:
\[
 f(x) = \begin{cases}
     x_1 \land x_2 & \text{if } x_1 + \cdots + x_n \leq 0.6 n, \\
     x_1 \lor x_2 & \text{otherwise}.
 \end{cases}
\]
If $x, y \sim \mu_{1/2}(\{0,1\}^n)$ then $x_1 + \cdots + x_n \approx n/2$ and $y_1 + \cdots + y_n \approx n/2$, while $\overline{x_1 \land y_1} + \cdots + \overline{x_n \land y_n} \approx (3/4) n$. Since
\[
 \overline{(x_1 \land x_2) \land (y_1 \land y_2)} = \overline{x_1 \land y_1} \lor \overline{x_2 \land y_2},
\]
the function $f$ is a $(\mu_{\NAND},o(1))$-approximate polymorphism of $P_{\NAND}$. However, $f$ is not close to any exact polymorphism of $P_{\NAND}$.
\end{example}

While we cannot guarantee that an approximate polymorphism of $P$ is close to a polymorphism of $P$, we are able to guarantee that it is close to a \emph{generalized polymorphism} of $P$.

\begin{definition}[Generalized polymorphism] \label{sec:generalized-polymorphism}
A tuple of functions $f_1,\dots,f_m\colon \{0,1\}^n \to \{0,1\}$ is a \emph{generalized polymorphism} of a predicate $P \subseteq \{0,1\}^m$ if the following holds: \emph{Given any vectors $x^{(1)},\dots,x^{(m)} \in \{0,1\}^n$ such that $(x^{(1)}_i,\dots,x^{(m)}_i) \in P$ for all $i \in [n]$, also $(f_1(x^{(1)}), \dots, f_m(x^{(m)})) \in P$.}

Given a probability distribution $\mu$ on $P$, the tuple $f_1,\dots,f_m$ is a \emph{$(\mu,\epsilon)$-approximate generalized polymorphism of $P$} if
\[
 \Pr_{(x^{(1)}_i,\dots,x^{(m)}_i) \sim \mu}[(f_1(x^{(1)}), \dots, f_m(x^{(m)})) \in P] \geq 1 - \epsilon,
\]
where each tuple $(x^{(1)}_i,\dots,x^{(m)}_i)$ is sampled independently according to $\mu$.
\end{definition}

Using this concept, we can state our main theorem:

\begin{restatable}[Main]{theorem}{thmintromain} \label{thm:intro-main}
Let $P \subseteq \{0,1\}^m$ be a non-empty predicate, and let $\mu$ be a distribution on $P$ with full support. For every $\epsilon > 0$ there exists $\delta > 0$ such that the following holds for all $n$.

If $f_1,\dots,f_m\colon \{0,1\}^n \to \{0,1\}$ is a $(\mu,\delta)$-approximate generalized polymorphism of $P$ then there exists a generalized polymorphism $g_1,\dots,g_m\colon \{0,1\}^n \to \{0,1\}$ of $P$ such that
\[
 \Pr_{\mu|_j}[g_j \neq f_j] \leq \epsilon \text{ for all } j \in [m],
\]
where $\mu|_j$ is the marginal distribution of the $j$'th coordinate.
\end{restatable}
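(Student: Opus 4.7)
The plan is to follow a two-stage approach that has become standard in the approximate-polymorphism literature: first establish a \emph{junta theorem} showing that any approximate generalized polymorphism is close to a tuple of functions depending on a bounded number of coordinates, and then upgrade the junta approximation to an exact generalized polymorphism via a \emph{compactness} argument on a finite function space.

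\textbf{Junta stage.} The goal here is to prove that, for some $K = K(\epsilon,P,\mu)$, any $(\mu,\delta)$-approximate generalized polymorphism $(f_1,\dots,f_m)$ admits $K$-juntas $\tilde f_1,\dots,\tilde f_m$ with $\Pr_{\mu|_j}[f_j \neq \tilde f_j] \leq \epsilon$ for each $j$. I would proceed as follows. First, for each marginal $\mu|_j$ introduce a noise operator $T_\rho^{(j)}$ on $(\{0,1\}^n,\mu|_j^{\otimes n})$ and use full support of $\mu$ (which forces a spectral gap controlled by $\min_{w\in P}\mu(w)$) to obtain a hypercontractive inequality. Next, rewrite the approximate polymorphism condition as
\[
 \Ex_{\mu^{\otimes n}}\bigl[1 - 1_P(f_1(x^{(1)}),\dots,f_m(x^{(m)}))\bigr] \leq \delta,
\]
and apply a correlated invariance-style analysis (in the spirit of Mossel's noise-stability arguments for correlated spaces) to express this quantity in terms of the influences and cross-influences of the $f_j$'s. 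A regularity/peeling step then extracts a bounded coordinate set $J$ outside of which the $f_j$'s have negligibly small influence, yielding the claimed $K$-juntas.

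\textbf{Compactness stage.} Once each $f_j$ is $\epsilon'$-close to a junta $\tilde f_j$ depending only on coordinates in a common bounded set $J$ (take unions), the approximate polymorphism defect of $(\tilde f_1,\dots,\tilde f_m)$ is still bounded by some $\delta'' = O(\delta + m\epsilon')$. Since the set of Boolean function tuples on $\{0,1\}^J$ is finite, there is a minimal nonzero defect $\eta = \eta(|J|,P,\mu) > 0$ among tuples that fail to be generalized polymorphisms of $P$. Choosing $\delta,\epsilon'$ small enough that $\delta'' < \eta$ forces $(\tilde f_1,\dots,\tilde f_m)$ to be an exact generalized polymorphism, which we extend to all of $\{0,1\}^n$ by ignoring the remaining coordinates; this is the desired $(g_1,\dots,g_m)$.

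\textbf{Main obstacle.} The technical heart of the proof is the junta stage, particularly the cross-influence analysis for arbitrary full-support distributions $\mu$ on $P$. The correlation structure of $\mu$ across the $m$ coordinates means that the standard product-measure hypercontractivity and invariance principles do not apply directly, and one must carefully track how the spectral gap and hypercontractive constants degrade as $\min_w \mu(w) \to 0$ (essentially the case where some polymorphism constraint becomes "almost rigid"). This degradation will determine the quantitative dependence $\delta(\epsilon)$ in the final theorem; for the five special cases enumerated in the abstract this is handled by bespoke Fourier arguments, and the difficulty here is to subsume all of them in one robust framework that works for any $P$ and any full-support $\mu$.
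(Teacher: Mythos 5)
Your proposal has two genuine gaps, one in each stage.

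\textbf{The junta stage is false for general $P$.} For the parity predicate $P_{m,b}=\{x\in\{0,1\}^m : x_1\oplus\cdots\oplus x_m=b\}$, the constant tuple $f_1=\cdots=f_m=x_1\oplus\cdots\oplus x_n$ is an exact (hence $(\mu,0)$-approximate) generalized polymorphism, and each $f_j$ is $1/2$-far from every junta on fewer than $n$ coordinates. The same happens for $P_==\{(0,0),(1,1)\}$ and for any predicate containing an affine relation. So no junta theorem of the form you state can hold for arbitrary $P$, and this is not a quantitative issue about hypercontractive constants degrading — it is a structural obstruction. The paper addresses exactly this: it first proves a general-distribution linearity-testing theorem (\Cref{thm:intro-blr}), uses it to ``peel off'' affine relations (replacing the corresponding $f_j$ by characters $\chi_{S_j,b_j}$, which are genuinely non-junta), and only then runs a junta-style argument on the remaining coordinates, where $P$ restricted to them has no affine relations.

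\textbf{The compactness stage has a circular dependence even where the junta stage holds.} Your argument needs $\delta'' = O(\delta + m\epsilon') < \eta(|J|)$, where $\eta(|J|)$ is the minimum nonzero defect over junta tuples on $|J|$ coordinates. But $\eta(|J|)$ can be as small as $\min(\mu)^{|J|}$, and $|J|=K(\epsilon')$ blows up as $\epsilon'\to 0$ (typically polynomially). So you would need $\epsilon' < \min(\mu)^{K(\epsilon')}/m$, which fails for $\epsilon'=\epsilon$ bounded away from zero. The paper avoids this by \emph{not} replacing $f_j$ with a junta: the $g_j$ it constructs agrees with $f_j$ on most of the space and is constant only on junta cells that are irregular or near-constant. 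The contradiction is then derived directly from a counting lemma (\Cref{lem:counting}, \Cref{lem:monotone-counting}), which says that if some junta cell survives rounding yet its subfunctions violate $P$, then already the \emph{original} tuple $(f_1,\dots,f_m)$ has defect at least $\min(\mu)^{|J|}\gamma$. That comparison is against $\delta$, which you are free to shrink, not against the closeness parameter $\epsilon$, so no circularity arises. Reworking your compactness step into a rounding-plus-counting argument of that shape is what is needed; as written, it cannot produce a $\delta(\epsilon)>0$ uniform in $n$.
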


In the remainder of the introduction, we first state several other results that we prove in the paper (\Cref{sec:intro-other}), and then review some of the relevant literature (\Cref{sec:intro-literature}).

\subsection{Other results} \label{sec:intro-other}

\Cref{thm:intro-main} has an unspecified dependence between $\delta$ and $\epsilon$; the dependence arising from our current proof is of tower type.
We can improve this dependence in the case of linearity testing.

\begin{restatable}[Linearity testing for general distributions]{theorem}{thmintroblr} \label{thm:intro-blr}
Let $P_{m,b} = \{ (a_1,\dots,a_m) \in \{0,1\}^m : a_1 \oplus \cdots \oplus a_m = b \}$, where $m \ge 3$ and $b \in \{0,1\}$, and let $\mu$ be a distribution on $P_{m,b}$ with full support. The following holds for $\delta = \Theta(\epsilon)$.

If $f_1,\dots,f_m\colon \{0,1\}^n \to \{0,1\}$ is a $(\mu,\delta)$-approximate generalized polymorphism of $P_{m,b}$ then there exists a generalized polymorphism $g_1,\dots,g_m\colon \{0,1\}^n \to \{0,1\}$ of $P_{m,b}$ such that $\Pr_{\mu|_j}[g_j \neq f_j] \leq \epsilon$ for all $j \in [m]$.

Moreover, there exist a set $S \subseteq [m]$ and $b_1,\dots,b_m \in \{0,1\}$ such that $g_j(x) = \bigoplus_{i \in S} x_i \oplus b_j$.

Furthermore, if $f_i = f_j$ and $\mu|_i = \mu|_j$ then $g_i = g_j$.
\end{restatable}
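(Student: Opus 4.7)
First, I would substitute $f_1 \leftarrow f_1 \oplus b$ to reduce to $b = 0$; the target form $g_j(x) = \bigoplus_{i \in S} x_i \oplus b_j$ transforms so that the constraint becomes $\bigoplus_j b_j = 0$. Because $\mu$ has full support on $P_{m,0}$, the marginal $\mu'$ of $\mu$ on coordinates $2, \ldots, m$ is an arbitrary full-support distribution on $\{0,1\}^{m-1}$ and $x^{(1)} = \bigoplus_{j \geq 2} x^{(j)}$ holds coordinate-wise, so the test rewrites as
\[\Pr_{(y_2, \ldots, y_m) \sim (\mu')^n}\Bigl[f_1\bigl(\textstyle\bigoplus_j y_j\bigr) = \bigoplus_{j=2}^m f_j(y_j)\Bigr] \geq 1 - \delta.\]

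Next, I would self-correct each $f_j$ in its native marginal measure by defining
\[g_j(x) := \operatorname*{arg\,max}_{a \in \{0,1\}}\,\Pr_{(y^{(k)})_{k \neq j} \sim \mu(\cdot \mid x^{(j)} = x)^n}\Bigl[\textstyle\bigoplus_{k \neq j} f_k(y^{(k)}) = a\Bigr].\]
A Markov inequality applied to the per-$x$ test rejection rate yields $\Pr_{\mu|_j}[g_j \neq f_j] \leq 2\delta$ directly in the correct measure, giving the linear $\epsilon = O(\delta)$ bound; by a union bound, $(g_1, \ldots, g_m)$ is itself a $(\mu, O(m\delta))$-approximate generalized polymorphism.

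The core step is to promote this tuple to an \emph{exact} generalized polymorphism. Once accomplished, a direct calculation shows that every exact generalized polymorphism of $P_{m,0}$ has the form $\bigoplus_{i \in S} x_i \oplus b_j$ with common $S \subseteq [n]$ and $\bigoplus_j b_j = 0$ (because $\bigoplus_j \bigoplus_{i \in S_j} y_j^{(i)}$ must be constant on every coordinate-wise XOR-zero tuple, which forces $S_1 = \cdots = S_m$), finishing the proof. To show each $g_j$ is exactly affine, I would adapt the BLR linearity argument to prove $g_1(x) \oplus g_1(x') = g_1(x \oplus x')$ for all $x, x' \in \{0,1\}^n$ (and symmetrically for each $g_j$). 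The classical uniform-measure proof uses that the shift $y \mapsto y \oplus t$ preserves the uniform distribution, coupling the plurality events at $x$, $x'$, and $x \oplus x'$ via shared randomness. Under non-uniform $\mu$, the conditional distribution $\mu(\cdot \mid x^{(j)} = x)^n$ depends on $x$, so this substitution fails; I would replace it with a coupling argument that jointly samples triples of witness tuples from the three conditional distributions at $x, x', x \oplus x'$ in a coordinated way enabling the required cancellations, using the strict positivity of $\mu$ to ensure uniformly bounded density ratios between the conditionals.

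The furthermore clause is immediate from the construction, since $g_j$ depends only on $f_j$ and on the conditional distributions of $\mu$ that produce the native marginal $\mu|_j$; so $f_i = f_j$ and $\mu|_i = \mu|_j$ force $g_i = g_j$. The principal obstacle is the coupling step in the linearity argument---extending BLR-style self-correction to a non-translation-invariant measure while preserving the linear $\delta = \Theta(\epsilon)$ dependence.
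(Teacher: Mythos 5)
Your proposal follows the self-correction route, but the step you flag as the principal obstacle is indeed a genuine gap, and the ``coupling argument'' you sketch does not close it. The difficulty is not merely technical: for a general full-support $\mu$, the conditional product measures $\mu(\cdot \mid x^{(j)} = x)^n$ at two different points $x, x'$ can have total variation distance exponentially close to $1$ as soon as $x$ and $x'$ differ on a linear fraction of coordinates. Bounded single-coordinate density ratios do \emph{not} give bounded ratios for the $n$-fold product, so there is no coupling of the witness tuples at $x$, $x'$, and $x \oplus x'$ under which the BLR-style cancellations go through with noticeable probability. This is precisely the known obstruction (observed already by David et al.\ and by Dinur, Filmus, and Harsha in the biased and sliced settings) that forces a different proof strategy. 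The paper sidesteps self-correction entirely: it decomposes $\mu = q\pi + (1-q)\nu$ with $\pi$ uniform, samples a random subset $S \sim \mu_q^n$ and a $\nu$-restriction $\alpha$ outside $S$, applies the classical Fourier-analytic uniform BLR to the restricted functions $f_j|_{S^c \gets \alpha|_j}$ to recover local characters $\chi_{A(S,\alpha)}$, and then stitches these local characters into a global one $\chi_A$ using two agreement theorems (the Dinur--Filmus--Harsha direct-product agreement theorem over $S$, and a folklore agreement lemma for mixing product Markov chains over $\alpha$). The linear dependence $\delta = \Theta(\epsilon)$ comes from tracking the expected error through these agreement steps, not from a margin argument for a plurality vote.

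A secondary flaw: you claim the ``furthermore'' clause is immediate because ``$g_j$ depends only on $f_j$ and on the conditional distributions of $\mu$ that produce $\mu|_j$.'' But your own construction defines $g_j$ via the other functions $f_k$, $k \ne j$, together with the conditional law of $\mu$ given coordinate $j$; knowing $f_i = f_j$ and $\mu|_i = \mu|_j$ does not force the remaining data (the other $f_k$'s or the conditional laws given coordinates $i$ versus $j$) to agree. In the paper this clause is instead deduced at the end from the explicit character form of the $g_j$'s: if $f_i = f_j$ and $\mu|_i = \mu|_j$ then $\Pr_{\mu|_i}[g_i \neq g_j] \le 2\epsilon$, and since distinct characters are $\Omega(1)$-far under a fixed full-support product measure, this forces $g_i = g_j$ once $\epsilon$ is small.
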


Another difference between \Cref{thm:intro-main} and \Cref{thm:intro-blr} is that we obtain the additional guarantee that if $f_i = f_j$ and the corresponding marginals of $\mu$ coincide, then $g_i = g_j$. We are also able to obtain this guarantee in the case of monotone predicates. The guarantee implies that if all marginals of $\mu$ are identical then an approximate polymorphism is close to an exact polymorphism.

\begin{restatable}[Monotone case]{theorem}{thmintromonotone}
\label{thm:intro-monotone}
Let $P \subseteq \{0,1\}^m$ be a non-empty monotone predicate: if $x \in P$ and $y \leq x$ (pointwise) then $y \in P$. Let $\mu$ be a distribution on $P$ with full support. For every $\epsilon > 0$ there exists $\delta > 0$ such that the following holds for all $n$.

If $f_1,\dots,f_m\colon \{0,1\}^n \to \{0,1\}$ is a $(\mu,\delta)$-approximate generalized polymorphism of $P$ then there exists a generalized polymorphism $g_1,\dots,g_m\colon \{0,1\}^n \to \{0,1\}$ of $P$ such that $\Pr_{\mu|_j}[g_j \neq f_j] \leq \epsilon$ for all $j \in [m]$.
Moreover, $g_j \leq f_j$ pointwise for all $j \in [m]$.

Furthermore, if $f_i = f_j$ and $\mu|_i = \mu|_j$ then $g_i = g_j$.
\end{restatable}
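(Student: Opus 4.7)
The plan is to bypass Theorem~\ref{thm:intro-main} and its tower-type bound by exploiting two features of the monotone setting: since $P$ is downward-closed and non-empty, $0^m \in P$, which lets us (i) convert the approximate polymorphism hypothesis into a direct influence bound on each $f_j$, and (ii) round any approximation to an exact generalized polymorphism via a pointwise ``and'' with $f_j$.

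The first step is a quantitative junta extraction: I would show that each $f_j$ is $O(\epsilon)$-close to a junta $h_j$ on $K = \exp(O(1/\epsilon^C))$ coordinates, and that the tuple $h_1,\dots,h_m$ remains an approximate generalized polymorphism of $P$. Starting from the $(\mu,\delta)$-approximate polymorphism hypothesis, I would use monotonicity of $P$ to derive a total-influence bound on each $f_j$ with respect to $\mu|_j$: downward flips of a column preserve membership in $P$, so any upward flip of $x^{(j)}_i$ that both flips $f_j$ and causes a polymorphism violation contributes to the $\delta$ error, and summing over $i$ bounds $\sum_i \Inf_i(f_j)$ by a quantity linear in $\delta$ and the reciprocal of the minimum mass of $\mu$. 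A Bourgain--Kindler--Safra-type junta theorem then yields the required junta approximation with the claimed exponential dependence on $1/\epsilon$.

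The second step rounds to an exact polymorphism. On the combined junta coordinate set $J$ of size at most $mK$, the generalized polymorphism condition is a constraint on a fixed finite-dimensional product space: for any non-polymorphism tuple of junta functions, the violation probability under $\mu$ is at least some $\delta_{\min}(P,\mu,K) > 0$. By taking $\delta$ small enough that the approximate-polymorphism error of $h_1,\dots,h_m$ falls below $\delta_{\min}$, we conclude $h_1,\dots,h_m$ is already an exact generalized polymorphism. I then set $g_j := f_j \land h_j$. By monotonicity of $P$, if $(h_1(x^{(1)}),\dots,h_m(x^{(m)})) \in P$ and $g_j \leq h_j$ pointwise, then $(g_1(x^{(1)}),\dots,g_m(x^{(m)})) \in P$ as well, so $g_1,\dots,g_m$ is an exact generalized polymorphism of $P$. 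Automatically $g_j \leq f_j$ pointwise, and
\[
\Pr_{\mu|_j}[g_j \neq f_j] = \Pr_{\mu|_j}[f_j = 1 \land h_j = 0] \leq \Pr_{\mu|_j}[h_j \neq f_j] \leq \epsilon.
\]

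The main obstacle is the junta extraction with exponential dependence: the translation from approximate polymorphism to total influence must be clean, and must avoid the intermediate regularity step of the general argument, which is exactly what is responsible for the tower-type bound in Theorem~\ref{thm:intro-main}. The constant $C = C(P,\mu)$ will track the combinatorial complexity of $P$ and the minimum probability of $\mu$. For the ``furthermore'' clause, note that both the junta extraction (which depends only on $f_j$ and $\mu|_j$) and the subsequent AND with $f_j$ produce identical outputs when $f_i = f_j$ and $\mu|_i = \mu|_j$, giving $g_i = g_j$.
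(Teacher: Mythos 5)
There is a genuine gap in Step 1, and it is fatal to the approach. You claim that the $(\mu,\delta)$-approximate polymorphism hypothesis implies a total-influence bound on each $f_j$, via the observation that upward flips of a column which flip $f_j$ and break the polymorphism condition contribute to the $\delta$ error. The problem is that an upward flip of $x^{(j)}_i$ which flips $f_j(x^{(j)})$ need not cause a polymorphism violation at all, and whether it does depends entirely on the values of the \emph{other} functions $f_k$. Concretely, take $P_{\NAND} = \{(0,0),(0,1),(1,0)\}$ with the uniform distribution $\mu$, and set $f_1(x) = \bigoplus_{i=1}^n x_i$, $f_2 \equiv 0$. Then $(f_1(x^{(1)}), f_2(x^{(2)})) \in \{(0,0),(1,0)\} \subseteq P_{\NAND}$ always, so $(f_1,f_2)$ is an \emph{exact} generalized polymorphism ($\delta = 0$), yet $f_1$ has total influence $\Theta(n)$ under $\mu|_1 = \mu_{1/3}$ and is maximally far from every junta. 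Any argument that converts the approximate-polymorphism hypothesis into a junta (or even a bounded-total-influence) conclusion about each $f_j$ individually is therefore doomed; the conclusion being proved is just false. Note that the theorem itself is not contradicted --- in this example $g_1 = f_1$, $g_2 = 0$ works --- the point is that $g_j$ need not be anything like a junta, so a junta-extraction strategy cannot produce it.

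This is precisely the structural issue the paper's proof is designed to handle. Instead of approximating $f_j$ by a junta $h_j$ and then rounding, the paper applies Jones' regularity lemma to obtain a shallow decision tree $T$, and defines $g_j$ by taking $g_j|_\rho \equiv 0$ at leaves $\rho$ where $f_j|_\rho$ is irregular or has small expectation, and $g_j|_\rho = f_j|_\rho$ otherwise. The resulting $g_j$ is not a junta: on the bulk of the cube it literally equals $f_j$, so high-influence or parity-like $f_j$ is accommodated. The counting lemma (driven by It Ain't Over Till It's Over) is then what rules out violations: if some leaf has all $f_j|_\rho$ regular and non-negligible yet the rounded assignment is outside $P$, the joint violation probability is bounded below, contradicting the choice of $\delta$. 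Your Step 2 (setting $g_j := f_j \wedge h_j$ and invoking monotonicity of $P$) is sound and is in the same spirit as the paper's zeroing-out step --- both produce $g_j \leq f_j$ pointwise --- but it can only be run once you have a correct replacement for the junta $h_j$, and that replacement cannot be a junta.
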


\paragraph{Intersecting families}

Friedgut and Regev~\cite{FR18} proved the following result on almost intersecting families (see also \cite{DT16}).

\begin{restatable}[Friedgut--Regev]{theorem}{thmintrofriedgutregev} \label{thm:intro-friedgut-regev}
Fix $0 < p < 1/2$. For every $\epsilon > 0$ there exist $\delta > 0$ and $j \in \mathbb{N}$ such that the following holds for all $n$ such that $pn$ is an integer.

If $\mathcal{F} \subseteq \binom{[n]}{pn}$ contains a $\delta$-fraction of the edges of the Kneser graph then there exists an intersecting family $\mathcal{G} \subseteq \binom{[n]}{pn}$ depending on $j$ points such that $|\mathcal{F} \setminus \mathcal{G}| \leq \epsilon \binom{n}{pn}$.
\end{restatable}

We provide an alternative proof of \Cref{thm:intro-friedgut-regev} by applying \Cref{thm:intro-monotone} to the NAND predicate. Our proof (which uses the same reduction from $\binom{[n]}{pn}$ to the product measure $\mu_p$ on $\{0,1\}^n$) is arguably simpler.

\paragraph{Polymorphisms over larger alphabets}

So far, we have considered predicates over the binary alphabet. However, the notion of polymorphisms makes sense for every finite alphabet. We conjecture that \Cref{thm:intro-main} extends to this setting. While we are unable to prove this conjecture in full generality, we are able to prove the following special case.

\begin{restatable}[Larger alphabets]{theorem}{thmintroalphabet} \label{thm:intro-alphabet}
Let $\Sigma$ be a finite set, let $P \subseteq \Sigma^m$, and let $\mu$ be a distribution on $P$ with full support. Suppose that for each $j \in [m]$ there exists $w \in P$ such that $w$ remains in $P$ even if we modify its $j$'th coordinate arbitrarily. For every $\epsilon > 0$ there exists $\delta > 0$ such that the following holds for all $n$.

If $f_1,\dots,f_m\colon \Sigma^n \to \Sigma$ is a $(\mu,\delta)$-approximate generalized polymorphism of $P$ then there exists a generalized polymorphism $g_1,\dots,g_m\colon \Sigma^n \to \Sigma$ of $P$ such that $\Pr_{\mu|_j}[g_j \neq f_j] \leq \epsilon$ for all $j \in [m]$.
\end{restatable}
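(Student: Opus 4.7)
The plan is to reduce to \Cref{thm:intro-main} via a one-hot encoding of the alphabet, with the flexibility condition used crucially to make the decoding step work.

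First, I would encode the setup: for each $\sigma \in \Sigma$ take the standard basis vector $e_\sigma \in \{0,1\}^{|\Sigma|}$; this yields an encoding $\phi \colon \Sigma^m \hookrightarrow \{0,1\}^{m|\Sigma|}$, a Boolean predicate $P^* := \phi(P)$, a distribution $\mu^* := \phi_* \mu$ on $P^*$ (with full support), and for each $f_j$ a tuple of Boolean indicators $F_{j,\sigma}(x) := \mathbf{1}[f_j(x) = \sigma]$. Unpacking definitions, $(F_{j,\sigma})_{j,\sigma}$ is an approximate generalized polymorphism of $P^*$ relative to $\mu^*$. I would then apply \Cref{thm:intro-main} (with precision $\epsilon / |\Sigma|$) to obtain Boolean generalized polymorphisms $G_{j,\sigma}$ of $P^*$ close to $F_{j,\sigma}$ in the appropriate marginals.

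Next, I would decode using flexibility. For each $x \in \Sigma^n$ and each $j \in [m]$, complete $x$ to an $m$-tuple $(x^{(1)}, \ldots, x^{(m)}) \in (\Sigma^n)^m$ by setting $x^{(j)} = x$ and $x^{(k)}$ to the constant function with value $w^{(j)}_k$ for $k \ne j$, using the flexibility witness $w^{(j)}$ at coordinate $j$. By flexibility, every column of this completion lies in $P$, so its encoding lies in $P^*$; the exact polymorphism property of $(G_{k,\sigma})$ applied to this completion forces the $j$-block of the output to be one-hot, defining $g_j(x)$ as the unique $\sigma$ with the corresponding $G_{j,\sigma}$ evaluating to $1$. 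Consistency across choices of witness is automatic since the output depends only on $x$. That $(g_j)_j$ is a generalized polymorphism of $P$ reduces to the polymorphism property of $(G_{k,\sigma})$ applied to any genuine $P$-configuration, and the closeness $\Pr_{\mu|_j}[g_j \ne f_j] \le \epsilon$ follows by a union bound over $\sigma \in \Sigma$, since any disagreement $g_j(x) \ne f_j(x)$ entails $G_{j,\sigma} \ne F_{j,\sigma}$ on the corresponding input for some $\sigma$.

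The main obstacle will be setting up the encoded Boolean problem so that \Cref{thm:intro-main} genuinely applies. The indicator $F_{j,\sigma}$ is naturally a function of the full one-hot encoding of $x^{(j)}$, an element of $\{0,1\}^{|\Sigma| n}$, rather than of a single $\{0,1\}^n$ input; matching this to the hypotheses of \Cref{thm:intro-main} requires care in choosing the dimensions and in verifying that the induced column distribution on $\{0,1\}^{m|\Sigma|}$ is i.i.d.\ from $\mu^*$. Once this framing is in place, the flexibility condition drives the decoding step as described, and the \emph{flexible-coordinate} hypothesis is precisely what is needed to realize every symbol of $\Sigma$ in each output block.
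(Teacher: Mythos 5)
Your reduction to \Cref{thm:intro-main} via one-hot encoding has a structural gap that I do not think can be repaired, and you have half-noticed it: the ``requires care in choosing the dimensions'' caveat is in fact a fatal obstruction, not a matter of bookkeeping. In \Cref{def:polymorphism}/\Cref{sec:generalized-polymorphism}, a generalized polymorphism of a predicate on $M$ coordinates is a tuple of $M$ functions, \emph{each of which takes its own separate input vector}; the predicate constraint couples the inputs only columnwise. With $P^* \subseteq \{0,1\}^{m|\Sigma|}$, the framework therefore assigns each pair $(j,\sigma)$ its own Boolean input $y^{(j,\sigma)} \in \{0,1\}^N$, and the function at coordinate $(j,\sigma)$ is only allowed to read $y^{(j,\sigma)}$. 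But the indicator $F_{j,\sigma}(x) = \mathbf{1}[f_j(x)=\sigma]$ genuinely depends on the whole word $x^{(j)}$, whose one-hot encoding is spread across all the vectors $y^{(j,\sigma')}$ for $\sigma' \in \Sigma$. There is no way to express $F_{j,\sigma}$ as a function of $y^{(j,\sigma)}$ alone, so the family $(F_{j,\sigma})$ is not of the form to which \Cref{thm:intro-main} applies.

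One might hope to force all $y^{(j,\sigma)}$ with the same $j$ to coincide, so that each $F_{j,\sigma}$ can read the full encoding. This is incompatible with the framework: the columns $(y^{(j,\sigma)}_\ell)_{j,\sigma}$ are sampled i.i.d.\ from $\mu^*$, and every point in the support of $\mu^*$ has each $j$-block one-hot (one coordinate equal to $1$, the rest $0$), hence never constant across $\sigma$. So the event $y^{(j,\sigma)} = y^{(j,\sigma')}$ for $\sigma \neq \sigma'$ has probability zero under the distribution the theorem speaks about, and the ``closeness'' conclusion $\Pr_{\mu^*|_{(j,\sigma)}}[G_{j,\sigma} \neq F_{j,\sigma}]$ you would like to extract does not even refer to the quantity you need. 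The paper's actual proof (\Cref{sec:general-alphabets}) avoids this entirely: it stays in $\Sigma$-world and reruns the regularity-plus-rounding argument directly, using a general-alphabet notion of regularity (\Cref{def:alphabet-regularity}), the alphabet versions of It Ain't Over Till It's Over (\Cref{thm:it-aint-over-alphabet}) and Jones' regularity lemma (\Cref{thm:jones-alphabet}), and a rounding step $\round_j^{\rho,\eta}$ whose soundness (\Cref{lem:alphabet-rounding,lem:alphabet-counting}) relies on exactly the flexibility hypothesis you invoke, but at the level of restrictions $\rho$ rather than via any Boolean encoding.
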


\paragraph{Input/output predicates}

The proofs of \Cref{thm:intro-monotone,thm:intro-alphabet} immediately generalize to a setting which involves two predicates $P,Q$. Given two predicates $P \subseteq \Sigma^m$ and $Q \subseteq \Delta^m$, a tuple $f_1,\dots,f_m\colon \Sigma^n \to \Delta$ is a $(P,Q)$-generalized polymorphism if the following holds: \emph{Given any vectors $x^{(1)},\dots,x^{(m)} \in \Sigma^n$ such that $(x^{(1)}_i,\dots,x^{(m)}_i) \in P$ for all $i \in [n]$, we have $(f_1(x^{(1)}),\dots,f_m(x^{(m)})) \in Q$.}
This setting arises natural in the study of promise CSPs~\cite{AGH17,BG21,BBKO21}, where such polymorphisms are often called \emph{weak polymorphisms}.

In the case of \Cref{thm:intro-monotone}, we require $P$ to be monotone but $Q$ can be arbitrary, and similarly, in the case of \Cref{thm:intro-alphabet}, the stated condition need only hold for $P$. While we do not work out these generalizations explicitly, they follow immediately from the proofs. In contrast, the proof of \Cref{thm:intro-main} does rely on the assumption $P = Q$.

\subsection{Related work} \label{sec:intro-literature}

\paragraph{Arrow's theorem}

Arrow's celebrated theorem~\cite{Arrow} can be expressed in the language of polymorphisms. Let $m \geq 3$. For every permutation $\pi \in S_m$, let $I(\pi) \in \{0,1\}^{\binom{m}{2}}$ be the following vector: $I(\pi)(i,j) = [\pi(i) > \pi(j)]$. Arrow's theorem states that if $f_{i,j}$ is a generalized polymorphism of $P_m := \{ I(\pi) : \pi \in S_m \}$ and each $f_{i,j}$ is \emph{unanimous} (satisfies $f_{i,j}(b,\dots,b) = b$ for $b \in \{0,1\}$) then there exists $k \in [n]$ such that $f_{i,j}(x) = x_k$ for all $i,j$.

Kalai~\cite{Kalai} considered the case $m = 3$. He showed that if $f_{1,2},f_{2,3},f_{3,1}$ is a $(\mu,\epsilon)$-approximate polymorphism of $P_3$, where $\mu$ is the uniform distribution, and furthermore the $f_{i,j}$ are \emph{balanced} (satisfy $\Pr_{\mu_{1/2}}[f_{i,j} = 1] = 1/2$), then there exists $k \in [n]$ such that either $f_{i,j}(x) = x_k$ for all $i,j$, or $f_{i,j} = 1 - x_k$ for all $i,j$. In this case the predicate $P_3$ consists of all triples $(a,b,c)$ such that $a,b,c$ are not all equal.
Keller~\cite{Keller2010} extended Kalai's result to arbitrary $m \ge 3$ and to arbitrary distributions, under various (severe) restrictions on the $f_{i,j}$.

Mossel~\cite{Mossel2012} determined all generalized polymorphisms of $P_m$ for all $m \ge 3$ (without assuming unanimity), and proved that approximate generalized polymorphisms of these predicates (with respect to the uniform distribution) are close to exact generalized polymorphisms. His techniques in fact work for arbitrary distributions (the missing piece, reverse hypercontractivity for arbitrary distributions, was proved in~\cite{MOS13}).
Keller~\cite{Keller2012} improved on Mossel's result in the case of the uniform distribution by determining the optimal dependence between $\epsilon$ and $\delta$.

\paragraph{Linearity testing}

Blum, Luby and Rubinfeld~\cite{BLR} were the first to propose the BLR test. They analyzed it using self-correction. Bellare et al.~\cite{BCHKS96} later gave a different argument using Fourier analysis. The test was generalized to arbitrary prime fields by Kiwi~\cite{Kiwi03}.

David et al.~\cite{DDGKS17} extended the BLR test to the setting of constant weight inputs, which is analogous to the setting of \Cref{thm:intro-friedgut-regev}. Translated to the setting of \Cref{thm:intro-blr}, they extended the BLR test to a natural distribution $\mu$ with $\Pr[\mu|_i = 1] = p$ for all $i$. Dinur et al.~\cite{DFH2025} analyzed the related ``affine test'' $f(x \oplus y \oplus z) = f(x) \oplus f(x \oplus y) \oplus f(x \oplus z)$ for the same distribution. Both of these works used \emph{agreement theorems} to reduce the biased case to the unbiased case: David et al.\ used the agreement theorem of Dinur and Steurer~\cite{DS14}, and Dinur et al.\ proved their own agreement theorem, which we also use to prove \Cref{thm:intro-blr}.

\paragraph{Functional predicates}

Filmus et al.~\cite{FLMM2020}, prompted by work of Nehama~\cite{Nehama} on judgment aggregation, proved \Cref{thm:intro-main} for the predicate $P_\land = \{ (a,b,a\land b) : a,b \in \{0,1\} \}$ and various distributions. Their analysis combined Bourgain's tail bound~\cite{Bourgain02,KKO18} with a study of the ``one-sided noise operator''.

Chase et al.~\cite{CFMMS2022} proved \Cref{thm:intro-main} for all predicates of the form $P_f = \{ (a_1,\dots,a_m, f(a_1,\dots,a_m)) : a_1,\dots,a_m \in \{0,1\} \}$ for an arbitrary function $f\colon \{0,1\}^m \to \{0,1\}$, with respect to the uniform distribution. They asked whether \Cref{thm:intro-main} extends to arbitrary predicates, a question we answer in the affirmative in this paper. Their general approach was similar to the approach taken in this paper, combining Jones' regularity lemma with the It Ain't Over Till It's Over theorem.

\subsection*{Paper organization}

We give an outline of our proof techniques in \Cref{sec:outline}. We prove \Cref{thm:intro-monotone} in \Cref{sec:monotone}, \Cref{thm:intro-main} in \Cref{sec:main}, \Cref{thm:intro-blr} in \Cref{sec:linearity}, \Cref{thm:intro-friedgut-regev} in \Cref{sec:intersecting}, and \Cref{thm:intro-alphabet} in \Cref{sec:general-alphabets}. Our proofs require several versions of Jones' regularity lemma~\cite{Jones2016}, proved in \Cref{sec:regularity}. We close the paper in \Cref{sec:open-questions} with a few open questions.

\paragraph{Acknowledgments}
This research was supported by ISF grant no.\ 507/24. 

We thank Elchanan Mossel for pointing us to his unpublished work with Schramm~\cite{MosselSchramm2009}.

We thank Yumou Fei for pointing out a mistake in the proof of \Cref{thm:intro-monotone}. The original proof attempted to use a more efficient version of Jones' regularity lemma, which outputs a decision tree rather than a junta (cf.~\cite{Mossel20}), in order to obtain better parameters.

\section{Proof outline} \label{sec:outline}

In this section we give a brief overview of the proof of \Cref{thm:intro-main}. We start with the proof in the monotone case (corresponding to \Cref{thm:intro-monotone}), which is more intuitive, and then describe the general case.

\subsection{Monotone case}
\label{sec:outline-monotone}

\paragraph{Triangle removal lemma}

The proof of \Cref{thm:intro-monotone} has the same general outline as the proof of the triangle removal lemma~\cite{RSz78} using Sz\'emeredi's regularity lemma~\cite{Sz78}.

\begin{theorem}[Triangle removal lemma]
For every $\epsilon > 0$ there exists $\delta > 0$ such that the following holds for all $n$.

If $G$ is a graph on $n$ vertices with at most $\delta \binom{n}{3}$ triangles, then we can make $G$ triangle-free by removing at most $\epsilon \binom{n}{2}$ edges.
\end{theorem}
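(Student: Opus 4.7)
The plan is to follow the classical Ruzsa--Szemer\'edi argument, combining Szemer\'edi's regularity lemma with the counting lemma for regular pairs. First I would fix a small parameter $\eta = \eta(\epsilon)$ and a density threshold $d = d(\epsilon)$ (both of order $\epsilon$), and apply the regularity lemma to $G$ with regularity parameter $\eta$ to obtain an equitable partition $V_1,\dots,V_k$ of the vertex set such that all but at most $\eta \binom{k}{2}$ pairs $(V_i,V_j)$ are $\eta$-regular. The number $k$ of parts will be bounded by a tower function of $1/\eta$, but this only affects the final dependence of $\delta$ on $\epsilon$.

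Next I would perform a clean-up step: remove all edges inside a single part, all edges between irregular pairs, and all edges between pairs $(V_i,V_j)$ whose edge density is less than $d$. A standard counting shows that the total number of edges removed is at most
\[
 \binom{k}{2} \left(\frac{n}{k}\right)^2 \cdot \frac{1}{k} + \eta \binom{k}{2} \left(\frac{n}{k}\right)^2 + d \binom{k}{2} \left(\frac{n}{k}\right)^2 \leq \epsilon \binom{n}{2},
\]
provided $\eta$ and $d$ are chosen as small constant multiples of $\epsilon$.

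The key step is then to show that the cleaned graph $G'$ is triangle-free. Suppose for contradiction that $G'$ contains a triangle with vertices lying in parts $V_i, V_j, V_k$ (necessarily distinct, since we removed intra-part edges). Then the three pairs $(V_i,V_j), (V_j,V_k), (V_i,V_k)$ are all $\eta$-regular and carry density at least $d$ in $G'$, hence also in $G$. The counting lemma (an easy consequence of $\eta$-regularity together with $\eta \ll d$) then produces at least $(d - 2\eta)^3 (n/k)^3 / 2$ triangles of $G$ sitting across these three parts, which is a positive constant (depending on $\epsilon$ and on $k$) times $n^3$. Setting $\delta$ to be smaller than this constant gives a contradiction with the hypothesis that $G$ has at most $\delta \binom{n}{3}$ triangles, so $G'$ is triangle-free and we are done.

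The main obstacle is quantitative rather than conceptual: the bound on $k$ coming from Szemer\'edi's regularity lemma is of tower type in $1/\eta$, and since the counting lemma forces $\delta$ to be smaller than a polynomial in $d/k$, the final dependence of $\delta$ on $\epsilon$ is of tower type. Improving this dependence is a major open problem and is not addressed by the plan above; we only aim for the existence of some $\delta>0$ as stated.
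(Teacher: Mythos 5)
Your proposal is correct and follows essentially the same route as the paper's sketch: apply Szemer\'edi's regularity lemma with parameter $\eta = \Theta(\epsilon)$, clean up by deleting intra-part edges, edges across irregular pairs, and edges across low-density pairs (total $\le \epsilon \binom{n}{2}$ edges), and then invoke the triangle counting lemma to show any surviving triangle would force $\Omega_\epsilon(n^3/k^3)$ triangles in $G$, contradicting the hypothesis for suitable $\delta$. The only cosmetic difference is that you keep the regularity parameter $\eta$ and the density threshold $d$ separate, whereas the paper sets both to $\eta = \epsilon/3$; both choices work.
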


The proof requires Sz\'emeredi's regularity lemma, which we state in a qualitative fashion.

\begin{theorem}[Regularity lemma for graphs]
For every $\epsilon > 0$ there is $M > 1/\epsilon$ such that the following holds for all graphs $G$.

The vertex set of $G$ can be partitioned into $M$ parts $V_1,\dots,V_M$ of almost equal size such that all but an $\epsilon$-fraction of pairs $(i,j)$  are \emph{$\epsilon$-regular}: there exists $p_{i,j} \in [0,1]$ such that the edges of $G$ connecting $V_i$ to $V_j$ ``behave like'' a random bipartite graph with edge density $p_{i,j}$, up to an error $\epsilon$.
\end{theorem}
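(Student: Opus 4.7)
The plan is to follow Szemer\'edi's original energy-increment (or ``index-increment'') argument. For a partition $\mathcal{P} = (V_1,\dots,V_k)$ of $V(G)$, I define the \emph{index}
\[
\mathrm{ind}(\mathcal{P}) = \sum_{i,j} \frac{|V_i|\,|V_j|}{n^2}\, d(V_i,V_j)^2,
\]
where $d(V_i,V_j)$ is the edge density between $V_i$ and $V_j$. Since densities lie in $[0,1]$, one has $0 \le \mathrm{ind}(\mathcal{P}) \le 1$, and a straightforward Cauchy--Schwarz check shows that refining $\mathcal{P}$ can only increase the index. The $p_{i,j}$ demanded by the theorem will eventually be taken to be the densities $d(V_i,V_j)$.

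The central lemma I would prove is the \emph{energy-increment step}: if $\mathcal{P}$ has $k$ parts and is \emph{not} $\epsilon$-regular (at least an $\epsilon$-fraction of pairs $(V_i,V_j)$ are irregular), then there is a refinement $\mathcal{P}'$ of $\mathcal{P}$ into at most $k \cdot 2^k$ parts with $\mathrm{ind}(\mathcal{P}') \ge \mathrm{ind}(\mathcal{P}) + \epsilon^5$. For each irregular pair I pick witness subsets $A_{ij}\subseteq V_i$ and $A_{ji}\subseteq V_j$ whose pairwise density deviates from $d(V_i,V_j)$ by more than $\epsilon$, and then split each $V_i$ simultaneously by its membership in all of the $A_{ij}$'s, producing $\le 2^k$ atoms per part. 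A defect-form Cauchy--Schwarz computation contributes at least $\epsilon^4 \cdot \frac{|V_i||V_j|}{n^2}$ to the index for each irregular pair, and summing over the at least $\epsilon\binom{k}{2}$ irregular pairs yields the claimed $\epsilon^5$ jump.

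Next, I would iterate. Start from a trivial equipartition into $\lceil 2/\epsilon \rceil$ pieces (so that the requirement $M > 1/\epsilon$ is met even at termination), and at each stage either declare the current partition $\epsilon$-regular or apply the energy-increment step. Since the index is bounded by $1$ and rises by $\epsilon^5$ at every step, the procedure terminates after at most $\epsilon^{-5}$ steps, while the number of parts grows by a factor of at most $2^k$ each time; this produces the familiar tower-of-twos bound $M \le T(O(\epsilon^{-5}))$.

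Finally, to make the parts ``almost equal'' in size, I would perform a clean-up: after termination, subdivide each $V_i$ into blocks of a uniform size $\lfloor n/M \rfloor$, collecting the fewer-than-$M$ leftover vertices into an exceptional class that is then dispersed arbitrarily among the blocks. The hard part of the proof is the bookkeeping in the increment step: verifying that the defect-Cauchy--Schwarz calculation on each irregular pair gives a clean quantitative $\epsilon^5$ gain (rather than an asymptotic one), and confirming that both the simultaneous refinement by many witness sets and the final equalization perturb the densities so little that $\epsilon$-regularity, possibly with a slightly relaxed parameter, survives to the end.
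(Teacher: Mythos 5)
The paper does not prove this theorem at all: it is Szemer\'edi's classical regularity lemma (cited as \cite{Sz78}), and the authors explicitly state it ``in a qualitative fashion'' purely to set up the analogy between the triangle removal lemma and their own proof strategy, where Jones' regularity lemma plays the role of Szemer\'edi's. So there is no paper proof to compare against.

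That said, your sketch is a correct outline of the standard energy-increment proof. The index functional, the defect-form Cauchy--Schwarz step giving an $\epsilon^5$-type gain, the simultaneous refinement of each part by all its witness sets, the termination after $O(\epsilon^{-5})$ iterations, and the resulting tower-type bound on $M$ are all as in the classical argument. Two minor remarks. First, be careful that the paper's (qualitative) statement asks for \emph{exactly} $M$ parts with $M$ depending only on $\epsilon$, not ``at most $M$''; your clean-up step of chopping into blocks of size $\lfloor n/M \rfloor$ handles this, but you should verify that the final equalization does not destroy too many regular pairs (this is exactly the bookkeeping you flag as ``the hard part''). Second, the statement also requires $M > 1/\epsilon$, which you handle by starting from an equipartition into at least $2/\epsilon$ pieces; since refinement only increases the number of parts, this is fine. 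With those details filled in, the proof is the standard one, and it is genuinely different in character from anything the paper itself does --- the paper treats this theorem as a black box and never touches the graph setting.
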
 

Given the regularity lemma, the proof of the triangle removal lemma is quite simple. Given a graph $G$, we apply the regularity lemma with an appropriate parameter $\eta > 0$, obtaining a partition $V_1,\dots,V_M$ in which all but an $\eta$-fraction of pairs is $\eta$-regular. We remove all edges between $V_i$ and $V_j$ if either (i) $i = j$, or (ii) the pair $(i,j)$ is not $\eta$-regular, or (iii) $p_{i,j} \leq \eta$. In total, we have removed at most an $(1/M+2\eta)$-fraction of edges. Choosing $\eta$ so that $\eta = \epsilon/3$, this fits within our budget.

For an appropriate choice of $\delta > 0$, the resulting graph is triangle-free. Indeed, any remaining triangle must involve three different parts $V_i,V_j,V_k$. Since the triangle survived the pruning process, we must have $p_{i,j},p_{i,k},p_{j,k} \geq \eta$. The \emph{triangle counting lemma} shows that the subgraph of $G$ induced by $V_i,V_j,V_k$ contains $\Omega(\eta^3)$ of the possible triangles between these vertices, and so $G$ contains $\Omega((\eta/M)^3) \binom{n}{3}$ triangles. Choosing $\delta$ appropriately, this contradicts the assumption on $G$.

\paragraph{Approximate polymorphisms of monotone predicates}

The proof of \Cref{thm:intro-monotone} follows the same plan. Sz\'emeredi's regularity lemma is replaced by Jones' regularity lemma~\cite{Jones2016}.\footnote{The same regularity lemma had been proved earlier by Mossel and Schramm~\cite{MosselSchramm2009} and (independently) by O'Donnell, Servedio, Tang, and Wan~\cite{ODSTW2010}, but Jones' paper is the only one which is publicly available. Moseel~\cite{Mossel20} proved a version of the regularity lemma for several functions at once.}

\begin{theorem}[Jones' regularity lemma]
For every $\epsilon,\tau > 0$, $d \in \mathbb{N}$, and $p \in (0,1)$ there exists $M \in \mathbb{N}$ such that the following holds for all $n$ and all functions $f\colon \{0,1\}^n \to \{0,1\}$.

There exists a set $J \subseteq [n]$ of size at most $M$ such that
\[
 \Pr_{x \sim \mu_p}[f|_{J \gets x} \text{ is not $(d,\tau)$-regular with respect to $\mu_p$}] \leq \epsilon,
\]
where $\mu_p$ is the product distribution on $\{0,1\}^J$ with $\Pr[x_j = 1] = p$, and a function $g\colon \{0,1\}^{J^c} \to \{0,1\}$ is $(d,\tau)$-regular if $\Inf_i[g^{\leq d}] \leq \tau$ for all $i \in J^c$, where the influence is computed with respect to $\mu_p$.
\end{theorem}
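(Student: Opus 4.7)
The plan is to prove Jones' regularity lemma by an energy-increment argument, organized as a decision-tree construction of the set $J$ and concluded by a flattening step.

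\textbf{Potential.} For any function $g$ on a hypercube with $\mu_p$-Fourier expansion $g = \sum_U \hat{g}(U)\chi_U$, define
\[
 \Phi(g) := \sum_U \hat{g}(U)^2\bigl(d - \min(|U|, d)\bigr) \in [0, d\|g\|_2^2].
\]
Using $\Ex_{x_i}[\widehat{g|_{i \gets x_i}}(T)^2] = \hat{g}(T)^2 + \hat{g}(T\cup\{i\})^2$, a direct Fourier computation (with the telescoping $\min(|U|,d) - \min(|U|-1,d) = \mathbb{1}[|U| \le d]$) yields the key identity
\[
 \Ex_{x_i \sim \mu_p}\bigl[\Phi(g|_{i \gets x_i})\bigr] - \Phi(g) = \Inf_i[g^{\leq d}],
\]
so restricting a single coordinate boosts the expected potential by exactly the level-$\le d$ influence of that coordinate.

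\textbf{Iterative construction.} I would build a decision tree $T$ of restrictions of $f$, starting from the trivial one-leaf tree with restriction $f$ itself. While some positive-probability leaf $\ell$ has restriction $f_\ell := f|_{J_\ell \gets x_{J_\ell}}$ that fails to be $(d,\tau)$-regular, pick $i_\ell \in [n]\setminus J_\ell$ with $\Inf_{i_\ell}[f_\ell^{\leq d}] > \tau$ and branch $\ell$ on $i_\ell$. Organize the branchings in phases that sweep all currently bad leaves simultaneously.

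\textbf{Termination.} Let $\Phi(T) := \sum_\ell \Pr[\ell]\,\Phi(f_\ell)$, which is non-negative and bounded by $d$. By the increment identity, branching at leaf $\ell$ increases $\Phi(T)$ by $\Pr[\ell]\,\Inf_{i_\ell}[f_\ell^{\leq d}] > \Pr[\ell]\,\tau$. Summing over any phase whose bad-leaf mass exceeds $\epsilon$ therefore boosts $\Phi(T)$ by more than $\epsilon\tau$, so the procedure stops within $d/(\epsilon\tau)$ phases, producing a tree of depth $O(d/(\epsilon\tau))$ that queries at most $M := 2^{O(d/(\epsilon\tau))}$ distinct variables.

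\textbf{Flattening to a junta.} Take $J$ to be the union of all variables queried in the tree, so $|J| \le M$. For a $1-\epsilon$ fraction of $x_J$, the reached leaf $\ell(x_J)$ is good and $f_\ell$ is $(d,\tau)$-regular; but the flat restriction $f|_{J \gets x_J}$ is a \emph{further} restriction of $f_\ell$ by the coordinates in $J \setminus J_\ell$. I expect this flattening to be the main obstacle: further restriction can transfer Fourier mass from above level $d$ into the level-$\leq d$ truncation, thereby inflating the very influences we are trying to control. I would resolve this by running the iteration with tightened parameters---a smaller threshold $\tau' \ll \tau$ and a larger degree $d' > d$ calibrated to the expected tree depth---so that the leaves enjoy a stronger regularity that survives further restriction, and invoking hypercontractivity for $\mu_p$ to bound the mass that can ``leak down'' under the additional restriction. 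The bound $M$ then depends only on $\epsilon,\tau,d,p$.
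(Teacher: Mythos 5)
Your potential function
\[
 \Phi(g) = \sum_U \hat{g}(U)^2\bigl(d - \min(|U|,d)\bigr)
\]
is a direct low-degree analogue of the paper's potential, which instead uses the noise stability $\Stab_\rho[g]$ with $\rho = 1-1/d$ and then converts noisy influences to level-$\le d$ influences at the end. Your increment identity $\Ex_{x_i}[\Phi(g|_{i \gets x_i})] - \Phi(g) = \Inf_i[g^{\le d}]$ is correct, the bound $\Phi \in [0,d]$ is correct, and the termination argument for the \emph{decision tree} statement is sound. So far this is the paper's proof of its decision tree version (Theorem 3.3 / \Cref{thm:jones-general-tree} in the paper) with an essentially interchangeable choice of potential.

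The genuine gap is exactly where you flag it: the flattening step. The tree guarantees regularity of $f|_{J_\ell \gets x_{J_\ell}}$ at each good leaf $\ell$, but the statement requires regularity of $f|_{J \gets x_J}$ where $J$ is the union of all queried variables. These differ by a further restriction, and further restriction can indeed destroy $(d,\tau)$-regularity (e.g.\ a product of $x_i$ with a large parity is regular, but restricting the parity variables can expose a dictator). Your proposed fix --- running with tightened $\tau' \ll \tau$, $d' > d$, and invoking hypercontractivity --- does not work as stated: there is no bound, uniform in the restriction and the function, on how much Fourier mass can leak below level $d$ under a further restriction of bounded size; the leakage depends on the structure of $f$ above level $d$, which regularity does not control.

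The correct resolution, and the one the paper uses, is much simpler and does not need hypercontractivity: the increment identity shows your potential is \emph{monotone} under restriction. One therefore runs the iteration directly on complete juntas, not on decision trees. At step $t$, if more than an $\epsilon$-fraction of $x_{J_t}$ make $f|_{J_t \gets x_{J_t}}$ irregular, pick for each such $x_{J_t}$ a high-influence coordinate and put it into $J_{t+1}$; the potential of the complete restriction tree $C(J_{t+1})$ is at least that of the partial tree $T'$ obtained from $C(J_t)$ by one level of splitting (monotonicity), and $\Phi(T') \ge \Phi(C(J_t)) + \epsilon\tau$. This terminates in $O(d/(\epsilon\tau))$ steps, but each step can add up to $2^{|J_t|}$ new coordinates, so the resulting $M$ is of tower type in $1/(\epsilon\tau)$, \emph{not} $2^{O(d/(\epsilon\tau))}$ as you claim. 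The statement only asserts existence of some $M = M(\epsilon,\tau,d,p)$, so the tower bound suffices; your exponential bound is not attainable by this argument (and is stronger than what the paper proves for the junta version --- the exponential bound holds only for the decision-tree version, where the flattening problem never arises).
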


Jones' lemma is proved by a simple potential function argument. In applications we need a variant of this lemma for several functions, and also allowing an initial set as a starting point. We also need two extensions of the lemma, one to functions $f\colon \{0,1\}^n \to [0,1]$, for our alternative proof of \Cref{thm:intro-friedgut-regev}, and another to functions $f\colon \Sigma^n \to \Sigma$, for proving \Cref{thm:intro-alphabet}. 
We prove all of these different versions of Jones' regularity lemma in \Cref{sec:regularity}.

The notion of regularity promised by Jones' regularity lemma is geared toward a result known as \emph{It Ain't Over Till It's Over}~\cite{MOO10}, which is key to our counting lemma.

\begin{theorem}[It Ain't Over Till It's Over]
For every $p,q \in (0,1)$ and $\epsilon > 0$ there exist parameters $d \in \mathbb{N}$ and $\tau,\delta > 0$ such that the following holds for all $n$ and all functions $f\colon \{0,1\}^n \to \{0,1\}$ which are $(d,\tau)$-regular with respect to $\mu_p$.

Let $\rho$ be a random restriction obtained as follows: for each coordinate independently, leave it free with probability $q$, and otherwise sample it according to $\mu_p$. If $\Ex_{\mu_p}[f] \geq \epsilon$ then
\[
 \Pr_\rho[\Ex_{\mu_p}[f|_\rho] \geq \delta] \geq 1 - \epsilon.
\]
\end{theorem}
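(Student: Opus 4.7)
The plan is to follow the standard MOO template behind the original ``It Ain't Over Till It's Over'' theorem: truncate $f$ to its low-degree part, invoke the invariance principle to transfer the analysis to a Gaussian polynomial, and then apply Gaussian reverse hypercontractivity (or Carbery--Wright anti-concentration) to rule out small values. Throughout I work in the $\mu_p$-biased Fourier basis $\{\chi_S\}$, setting $\tilde f := f^{\leq d}$, $F_\rho := \Ex_{\mu_p}[f|_\rho]$, and $\tilde F_\rho := \Ex_{\mu_p}[\tilde f|_\rho]$.

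First I would control the truncation error. Since integrating over a free coordinate annihilates every character depending on it, a short Fourier computation gives
\[
 \Ex_\rho\bigl[(F_\rho - \tilde F_\rho)^2\bigr] \;=\; \sum_{|S|>d} (1-q)^{|S|}\, \hat f(S)^2 \;\leq\; (1-q)^d,
\]
because a set $S$ survives partial averaging precisely when it avoids the free coordinates, with probability $(1-q)^{|S|}$. For $d$ sufficiently large in $1/q$ and the eventual $\delta$, Markov then makes $|F_\rho - \tilde F_\rho| < \delta/2$ outside a $\rho$-set of measure at most $\epsilon/2$, so it suffices to lower bound $\tilde F_\rho$ with high probability.

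Next I would analyze $\tilde F_\rho$ viewed as a function of the pair $(\sigma, x)$, where $\sigma \in \{0,1\}^n$ records which coordinates are fixed and $x$ holds their $\mu_p$-values. Expansion in the biased basis yields
\[
 \tilde F_{\sigma,x} \;=\; \sum_{|S|\leq d,\, S \subseteq \{\sigma=1\}} \hat f(S)\, \chi_S(x),
\]
a multilinear polynomial of degree at most $d$ with mean $\Ex_{\mu_p}[f] \geq \epsilon$ and $L^2$ norm at most $1$. Since averaging only shrinks influences, its coordinate influences are bounded by those of $\tilde f$, hence by $\tau$. The MOO invariance principle then implies that the distribution of $\tilde F_\rho$ is within L\'evy distance a small power of $\tau$ of the Gaussian polynomial $\tilde G$ obtained by replacing each biased character by a standard Gaussian of matching second moment. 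Finally, a Carbery--Wright style anti-concentration argument (or, equivalently, the reverse hypercontractive inequality applied to the nonnegative proxy $\max(\tilde F_\rho, 0)$) yields a threshold $\delta = \delta(\epsilon, d, p, q) > 0$ for which $\Pr[\tilde G < \delta] \leq \epsilon/4$. Chaining these three estimates gives $\Pr_\rho[F_\rho < \delta/2] \leq \epsilon$, as required.

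The main obstacle I expect is quantitative, at the Gaussian anti-concentration step: $\tilde G$ is not a priori nonnegative and its second moment need not be bounded away from zero, so Carbery--Wright alone does not immediately convert the mean lower bound into a one-sided tail bound. The remedy is to exploit $F_\rho \in [0,1]$ and smooth the indicator $\mathbf{1}_{\cdot < \delta}$ into a Lipschitz test function before invoking invariance, and then to invoke Gaussian reverse hypercontractivity in the spirit of \cite{MOS13}. Balancing the three sources of loss---truncation in $(1-q)^d$, invariance in a small power of $\tau$, and Gaussian anti-concentration in $\delta$---produces the interplay among $d$, $\tau$, and $\delta$ demanded by the statement, and explains why the parameters are obtained only qualitatively rather than with a clean explicit rate.
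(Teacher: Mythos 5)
The paper does not prove this theorem: it is imported wholesale from~\cite{MOO10}, with the remark that the explicit parameters ``can be read from the proof in~\cite{MOO10}.'' So there is no paper-internal proof to compare against; your proposal amounts to a reconstruction of the argument in~\cite{MOO10}. Your truncation estimate is correct: since $F_\rho = \sum_{S \subseteq \{\text{fixed}\}} \hat f(S)\,\chi_S(\rho)$, orthogonality gives $\Ex_\rho[(F_\rho - \tilde F_\rho)^2] = \sum_{|S|>d}(1-q)^{|S|}\hat f(S)^2 \le (1-q)^{d+1}$. The invariance step is also in the right spirit, though you should check the influence of the ``indicator'' variables $\sigma_i$ as well as the ``value'' variables $x_i$ (both are bounded by $\Inf_i[f^{\le d}] \le \tau$ after a short computation), and the polynomial in $(\sigma, x)$ has degree up to $2d$, not $d$.

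The genuine gap is the final anti-concentration step. Knowing only that $\tilde G$ has mean at least $\epsilon$, degree at most $2d$, $L^2$-norm at most $1$, and small influences does \emph{not} yield $\Pr[\tilde G < \delta] \le \epsilon/4$: a Gaussian random variable with mean $\epsilon$ and variance $\Theta(1)$ already meets all of those constraints and yet lies below $0$ on a constant fraction of the space. The constraint $F_\rho \in [0,1]$ must be used essentially, and in two distinct ways: the left tail $\Pr[\tilde G < -\theta]$ is controlled via the truncation error and $F_\rho \ge 0$ (so a large negative part of $\tilde G$ would contradict invariance), while $\Pr[\tilde G \in [0,\delta)]$ requires a dichotomy in $\operatorname{Var}[\tilde G]$ --- if the variance is below a threshold like $\epsilon^3$, Chebyshev alone gives concentration around the mean $\ge \epsilon$, and only when the variance is bounded away from zero does Carbery--Wright apply. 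Your proposed remedy, invoking reverse hypercontractivity ``in the spirit of~\cite{MOS13},'' is the wrong tool here: reverse hypercontractivity controls the positive correlation of a pair of dense sets under a noise operator, and does not provide a one-sided lower tail bound for a restricted conditional expectation. Once the variance dichotomy and two-sided use of $F_\rho \in [0,1]$ are in place, the cascade of parameter choices you anticipate ($d$ large in $1/q$, then $\tau$ small in $d$, then $\delta$ small in $\epsilon, d, \tau$) does close the argument, which matches the quantitative form stated in the paper.
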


Applying the theorem to both $f$ and $1-f$, it implies that if $f$ is regular and its expectation is in $[\epsilon,1-\epsilon]$, then even if we sample all but a $q$-fraction of inputs, its expectation still lies in $[\delta,1-\delta]$ (where $\delta$ could be much smaller than $\epsilon$), with probability $1-\epsilon$. This explains its moniker.

\medskip

We outline the proof of \Cref{thm:intro-monotone} in the special case of the predicate $P_{\NAND} = \{(0,0), (1,0), (0,1)\}$; the general case involves no further complications. Let us recall what we would like to prove.

\begin{theorem}[NAND testing]
Let $\mu$ be a distribution on $P_{\NAND}$ with full support.
For every $\epsilon > 0$ there exists $\delta > 0$ such that the following holds for all $n$.

If $f_1,f_2\colon \{0,1\}^n \to \{0,1\}$ is a $(\mu,\delta)$-approximate generalized polymorphism of $P_{\NAND}$ then there exists a generalized polymorphism $g_1,g_2\colon \{0,1\}^n \to \{0,1\}$ of $P_{\NAND}$ such that
\[
 \Pr_{\mu|_j}[g_j \neq f_j] \leq \epsilon \text{ for } j \in \{1, 2\}.
\]
Furthermore, if $f_1 = f_2$ and $\mu|_1 = \mu|_2$ then $g_1 = g_2$.
\end{theorem}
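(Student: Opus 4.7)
The plan mirrors the triangle-removal analogy laid out in \Cref{sec:outline-monotone}. First, I would apply a joint, two-function version of Jones' regularity lemma to $(f_1, f_2)$ with parameters $d, \tau, \eta_0$ chosen as functions of $\epsilon$, obtaining a common junta $J \subseteq [n]$ of bounded size $M = M(\epsilon)$ such that for each $j \in \{1,2\}$, all but an $\eta_0$-fraction of restrictions $\rho \sim \mu|_j^{\otimes J}$ yield an $f_j|_\rho$ that is $(d,\tau)$-regular with respect to $\mu|_j$. Because the definition of $g_j$ below is going to be symmetric in $j$, I use the same $J$ for both coordinates.

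Second, I would define $g_j$ by restriction pruning: for each $\rho \in \{0,1\}^J$, set
\[
 g_j|_\rho = \begin{cases} f_j|_\rho & \text{if } f_j|_\rho \text{ is } (d,\tau)\text{-regular w.r.t.\ } \mu|_j \text{ and } \Ex_{\mu|_j}[f_j|_\rho] \ge \eta, \\ 0 & \text{otherwise,} \end{cases}
\]
for a threshold $\eta = \eta(\epsilon)$. By construction $g_j \le f_j$ pointwise, and by choosing $\eta_0, \eta \ll \epsilon$ we get $\Pr_{\mu|_j}[g_j \ne f_j] \le \eta_0 + \eta \le \epsilon$. Since $g_j$ depends only on $f_j$ and $\mu|_j$, the assumption $f_1 = f_2$, $\mu|_1 = \mu|_2$ automatically yields $g_1 = g_2$.

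Third, I would argue that $(g_1, g_2)$ is an exact generalized polymorphism of $P_{\NAND}$. Suppose for contradiction that there exist compatible $x, y \in \{0,1\}^n$ (with $x_i \land y_i = 0$ for every $i$) such that $g_1(x) = g_2(y) = 1$. Writing $\rho_1 = x|_J$ and $\rho_2 = y|_J$, neither restriction was pruned, so $f_1|_{\rho_1}, f_2|_{\rho_2}$ are both $(d,\tau)$-regular with expectation at least $\eta$. Now invoke a two-function correlated It Ain't Over Till It's Over: sample $(x', y') \sim \mu^{\otimes n}$ conditioned on $x'|_J = \rho_1$, $y'|_J = \rho_2$, which outside $J$ is just $(n-|J|)$ independent draws from $\mu$. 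Regularity plus nontrivial expectation of both restrictions should force $\Pr[f_1(x') \land f_2(y') = 1] \ge \gamma$ for some $\gamma = \gamma(\eta, d, \tau) > 0$. Multiplying by $\mu^{\otimes J}(\{(x_i, y_i)_{i \in J}\}) \ge \mu_{\min}^{M}$, where $\mu_{\min} = \min_{w \in P_{\NAND}} \mu(w) > 0$ by full support, gives $\Pr_{\mu^{\otimes n}}[f_1(x') \land f_2(y') = 1] \ge \gamma \cdot \mu_{\min}^M$. Choosing $\delta$ below this positive constant contradicts the $(\mu,\delta)$-polymorphism hypothesis.

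The main obstacle is the correlated, multi-function counting step: a version of It Ain't Over Till It's Over in which $x'$ and $y'$ are jointly sampled from $\mu$ coordinatewise rather than independently. The paper alludes to exactly this kind of extension when discussing variants of Jones' lemma and the counting lemma, and the regularity notion produced in step one must be calibrated so that the statement applies directly. Everything else is parameter bookkeeping: pick $d, \tau, \eta_0, \eta$ so that the total pruning mass fits into $\epsilon$, then pick $\delta < \gamma \cdot \mu_{\min}^M$.
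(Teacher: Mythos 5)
Your overall plan---Jones' regularity on $(f_1,f_2)$, prune irregular or low-expectation restrictions to define $g_j$, then bound the distance and run a counting argument---is precisely the paper's (see \Cref{sec:outline-monotone}). The distance bookkeeping and the observation that $g_j$ is determined by $(f_j,\mu|_j)$ (so $f_1 = f_2$ and $\mu|_1 = \mu|_2$ gives $g_1 = g_2$) are all correct.

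The genuine gap is the counting step, and it is the heart of the proof. You say that regularity and nontrivial expectation of $f_1|_{\rho_1}$ and $f_2|_{\rho_2}$ ``should force'' $\Pr_{\mu^{J^c}}[f_1|_{\rho_1}(y^{(1)}) \land f_2|_{\rho_2}(y^{(2)}) = 1] \ge \gamma$, invoking an unspecified ``two-function correlated It Ain't Over Till It's Over.'' But this is not a routine extension. Under $(y^{(1)}_i, y^{(2)}_i) \sim \mu$ the two inputs are coupled (indeed $y^{(1)}_i \land y^{(2)}_i = 0$ always), so $f_1(y^{(1)})$ and $f_2(y^{(2)})$ are not independent and the product-of-expectations heuristic is simply false without further work. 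The paper's resolution is the two-step sampling in \Cref{lem:monotone-counting-NAND}: sample a restriction $\rho \in \{(0,0),(1,0),(0,1),(\ast,0),(0,\ast)\}^{J^c}$ such that each star type appears with some small probability $q$ in each coordinate, calibrated so that the conditional marginal of $\rho|_j$ on $\{0,1\}$ equals $\mu|_j$. Conditioned on $\rho$, the free coordinates of $\phi_1 := f_1|_{\rho_1,\rho|_1}$ and of $\phi_2 := f_2|_{\rho_2,\rho|_2}$ are \emph{disjoint}, so the two events are genuinely independent; the ordinary one-sided \Cref{thm:it-aint-over} applied to each separately shows both conditional expectations stay at least $\delta_j$ with probability $1-O(\epsilon)$ over $\rho$, giving $\gamma = (1-O(\epsilon))\delta_1\delta_2$. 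Without this decoupling restriction (or some substitute such as reverse hypercontractivity for $\mu$, which you do not invoke), the counting step in your writeup is an assertion rather than an argument.
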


Suppose we are given $f_1,f_2$. Apply Jones' regularity lemma with $p := \mu|_1$, $\epsilon := \epsilon/4$, and appropriate $d,\tau$ to $f_1$ and with $p := \mu|_2$ to $f_2$, obtaining a set $J$ of size at most $M$, for a parameter $M$ depending on $\mu,\epsilon,d,\tau$. Then
\[
 \Pr_{(x^{(1)},x^{(2)}) \sim \mu^J}[f_1|_{J \gets x^{(1)}} \text{ and } f_2|_{J \gets x^{(2)}} \text{ are $(d,\tau)$-regular}] \geq 1 - \epsilon/2.
\]

We define $g_1,g_2$ by zeroing outputs of $f_1,f_2$ as follows. We set $g_j|_{J \gets x^{(j)}} \equiv 0$ if either (i) $f_j|_{J \gets x^{(j)}}$ is not regular, or (ii) $\Ex[f_j|_{J \gets x^{(j)}}] \leq \epsilon/2$. By construction, $\Pr[g_j \neq f_j] \leq \epsilon$. Also, if $f_1 = f_2$ and $\mu|_1 = \mu|_2$ then $g_1 = g_2$.

For an appropriate choice of $\delta$, the resulting pair $(g_1,g_2)$ is a generalized polymorphism of $P_\NAND$. Indeed, if this is not the case, then there exist some $x^{(1)},x^{(2)} \in \{0,1\}^J$ such that $g_1|_{J \gets x^{(1)}},g_2|_{J \gets x^{(2)}}$ are both non-zero. If this happens then $g_1|_{J \gets x^{(1)}},g_2|_{J \gets x^{(2)}}$ are both $(d,\tau)$-regular and their expectations are at least $\epsilon/2$. We will prove a counting lemma which states that for an appropriate choice of $d,\tau$,
\[
 \Pr_{(y^{(1)},y^{(2)}) \sim \mu^{J^c}}[g_1|_{J \gets x^{(1)}}(y^{(1)}) = g_2|_{J \gets x^{(2)}}(y^{(2)}) = 1] \geq \gamma,
\]
where $\gamma$ depends on $\mu$ and $\epsilon$. Since $g_1,g_2$ agree with $f_1,f_2$ on these inputs, this implies that
\[
 \Pr_{(z^{(1)},z^{(2)}) \sim \mu^n}[f_1(z^{(1)}) = f_2(z^{(2)}) = 1] \geq \min(\mu)^M \gamma, \text{ where } \min(\mu) = \min_{w \in P_\NAND} \mu(w).
\]
Choosing $\delta$ to be smaller than the right-hand side, we obtain a contradiction to the assumption that $(f_1,f_2)$ is a $(\mu,\delta)$-approximate generalized polymorphism.

\smallskip

It remains to prove the counting lemma.

\begin{lemma}[Counting lemma for NAND]
Let $\mu$ be a distribution on $P_\NAND$ with full support.
For every $\epsilon > 0$ there exist $d \in \mathbb{N}$ and $\tau,\gamma > 0$ such that the following holds for all $n$.

If $\phi_1,\phi_2\colon \{0,1\}^n \to \{0,1\}$ are $(d,\tau)$-regular (with respect to $\mu|_j$) and have expectation at least $\epsilon/2$ (with respect to $\mu|_j$) then
\[
 \Pr_{(y^{(1)},y^{(2)}) \sim \mu^n}[\phi_1(y^{(1)}) = \phi_2(y^{(2)}) = 1] \geq \gamma.
\]
\end{lemma}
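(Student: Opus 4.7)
The plan is to combine the It Ain't Over Till It's Over theorem with a reverse hypercontractive inequality for the joint distribution $\mu$. First, I would couple the two random restrictions required by IAOTIO through a \emph{single} random free set. Sample $S \subseteq [n]$ by including each coordinate independently with probability $q \in (0,1)$ (to be chosen as a function of $\mu$ and $\epsilon$), and for each $i \notin S$ sample $(z^{(1)}_i, z^{(2)}_i) \sim \mu$ jointly. The marginal law of this restriction on side $j$ is exactly that required by IAOTIO applied to $\phi_j$ with respect to $\mu|_j$. Since $\phi_j$ is $(d,\tau)$-regular and has expectation at least $\epsilon/2$, choosing $d, \tau, q$ appropriately gives a $\delta_0 = \delta_0(\mu, \epsilon) > 0$ such that, with probability at least $1 - \epsilon/4$, the restriction $\phi_j|_\rho$ has expectation at least $\delta_0$ under $\mu|_j$. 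A union bound then gives probability at least $1 - \epsilon/2$ that both restrictions are simultaneously ``non-degenerate''.

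Second, conditional on this good event, let $\psi_j := \phi_j|_\rho\colon \{0,1\}^S \to \{0,1\}$. The counting lemma reduces to showing
\[
\Pr_{(y^{(1)}_S, y^{(2)}_S) \sim \mu^S}\bigl[\psi_1(y^{(1)}_S) = \psi_2(y^{(2)}_S) = 1\bigr] \geq \gamma_0
\]
for some $\gamma_0 = \gamma_0(\mu, \epsilon) > 0$. For this I would invoke reverse hypercontractivity for the Markov kernel $T f(y) = \Ex[f(x) \mid y]$ associated to $\mu$. Although $\mu$ is not supported on $(1,1)$, both of its marginals have full support on $\{0,1\}$, and the bipartite support graph of $\mu$ is connected, so $T$ is irreducible. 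By the Mossel--Oleszkiewicz--Sen reverse hypercontractive inequality, tensorized to $\mu^S$, we obtain $\Ex_{\mu^S}[\psi_1(y^{(1)}_S)\psi_2(y^{(2)}_S)] \geq \delta_0^{C(\mu)}$ for some constant $C(\mu)$, so $\gamma_0 := \delta_0^{C(\mu)}$ works. Combining the two steps, the lemma holds with $\gamma := \tfrac{1}{2}\gamma_0$.

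The main obstacle I anticipate is cleanly matching the hypotheses of reverse hypercontractivity to a Markov kernel whose support misses a corner of $\{0,1\}^2$. Since $\mu$ has full support on $P_{\NAND}$, the ratio between the minimum atom of $\mu$ and the product of its marginals is bounded away from $0$, which is the quantitative input required by the MOS inequality; alternatively one can reduce to the setting of full-support product chains by a small explicit change of variables. An appealing feature of this approach is that both the coupled IAOTIO step and the reverse hypercontractive bound go through essentially unchanged for the general form of \Cref{thm:intro-monotone}, with $P_{\NAND}$ replaced by an arbitrary full-support $\mu$ on a monotone predicate $P$; only the constant $C(\mu)$ and the target event ``some forbidden pattern'' change.
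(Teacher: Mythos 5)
Your proposal breaks at Step~2. The Mossel--Oleszkiewicz--Sen reverse hypercontractive inequality (specifically the form you need, \cite[Lemma 8.3]{MOS13}) requires the base distribution on the pair space to have \emph{full} support, and $\mu$ is supported only on $P_{\NAND} = \{(0,0),(0,1),(1,0)\}$: the $(1,1)$ corner has probability zero. This is not a technical wrinkle that the ``bounded ratio of atoms to marginals'' fixes --- the inequality you want is actually false for this $\mu$. Concretely, take $\psi_1 = \psi_2 = y_1$ (a dictator on a free coordinate). Then $\Ex_{\mu|_j}[\psi_j] = \Pr[\mu|_j = 1] > 0$ can exceed any given $\delta_0$, yet
\[
 \Pr_{(y^{(1)}_S,y^{(2)}_S) \sim \mu^S}\bigl[\psi_1(y^{(1)}_S) = \psi_2(y^{(2)}_S) = 1\bigr] = \Pr_\mu\bigl[y^{(1)}_1 = y^{(2)}_1 = 1\bigr] = \mu(1,1) = 0.
\]
Your Step~1 only guarantees $\Ex[\psi_j] \ge \delta_0$; it does not guarantee that the restrictions $\psi_j$ are regular, so the dictator scenario is genuinely possible after restriction, and no hitting bound of the form $\delta_0^{C(\mu)}$ can hold.

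The underlying issue is that coupling the two IAOTIO applications through a \emph{single} shared free set $S$ leaves $\psi_1$ and $\psi_2$ acting on the \emph{same} coordinates, where the joint distribution $\mu$ can never put both sides at~$1$. The paper's restriction $\nu$ is engineered to avoid exactly this: each position is assigned a restriction of the form $s_j = (*,0,\dots,0)$ with the star in a \emph{single} slot $j$, so after restricting, $\phi_1|_{\rho|_1}$ and $\phi_2|_{\rho|_2}$ depend on \emph{disjoint} sets of free coordinates. The events $\phi_j|_{\rho|_j} = 1$ then become independent, and the lower bound $\gamma = (1-m\epsilon)\delta_1 \cdots \delta_m$ is just a product of probabilities. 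No hitting lemma for a non-full-support kernel is needed, and that is precisely what makes the argument go through. (The paper does use \cite[Lemma 8.3]{MOS13} elsewhere --- in \Cref{lem:hitting} --- but only for distributions with genuine full support on all of $\{0,1\}^m$, which is not the case here.)
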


In order to prove this lemma, we will sample $\mu$ in two steps. In the first step, we sample a restriction $\rho$:
\[
 \rho =
 \begin{cases}
     (0,0) & \text{w.p. } (1 - q)\mu(0,0) - q, \\
     (1,0) & \text{w.p. } (1 - q)\mu(1,0), \\
     (0,1) & \text{w.p. } (1 - q)\mu(0,1), \\
     (\ast, 0) & \text{w.p. } q, \\
     (0, \ast) & \text{w.p. } q.
 \end{cases}
\]
We choose the parameter $q > 0$ to be such that all probabilities are positive.

In the second step, if we sampled $(\ast, 0)$, then we sample the first coordinate using $\mu|_1$. Similarly, if we sampled $(0, \ast)$, then we sample the second coordinate using $\mu|_2$. The end result has the same distribution as $\mu$ by design. Also, the distribution of $\rho|_1$ given that $\rho|_1 \neq \ast$ is the same as $\mu|_1$, and similarly for $\rho|_2$.

We apply It Ain't Over Till It's Over to the function $\phi_1$ with $\epsilon := \epsilon/2$ and $q := q$ to obtain $d_1,\tau_1,\delta_1$, and to the function $\phi_2$ to obtain $d_2,\tau_2,\delta_2$. Choosing $d = \max(d_1,d_2)$ and $\tau = (\tau_1,\tau_2)$, It Ain't Over Till It's Over implies that
\[
 \Pr_{(u^{(1)},u^{(2)}) \sim \rho^n}[\Ex_{\mu|_1}[\phi_1|_{\rho|_1}] \geq \delta_1 \text{ and } \Ex_{\mu|_2}[\phi_2|_{\rho|_2}] \geq \delta_2] \geq 1 - \epsilon.
\]
After apply the restriction $\rho$, the events $\phi_1|_{\rho|_1} = 1$ and $\phi_2|_{\rho|_2} = 1$ are independent.
It follows that
\[
 \Pr_{(y^{(1)},y^{(2)}) \sim \mu^n}[\phi_1(y^{(1)}) = \phi_2(y^{(2)}) = 1] \geq (1-\epsilon) \delta_1 \delta_2.
\]
Setting $\gamma := (1-\epsilon) \delta_1 \delta_2$ completes the proof.

\subsection{General case}
\label{sec:outline-general}

In the monotone case, we obtained $g_j$ from $f_j$ by zeroing out certain ``subfunctions'' $f_j|_{J \gets x^{(j)}}$. In the general case, we also need to fix some subfunctions to one. This suggests the following counting lemma, for a given predicate $P \subseteq \{0,1\}^m$.

\begin{lemma}[Counting lemma for $P$]
Let $\mu$ be a distribution on $P$ with full support. For every $\epsilon > 0$ there exist $d \in \mathbb{N}$ and $\tau,\gamma > 0$ such that the following holds for all $n$.

Let $\phi_1,\dots,\phi_m\colon \{0,1\}^n \to \{0,1\}$ be functions such that $\phi_j$ is $(d,\tau)$-regular with respect to $\mu|_j$ for all $j \in [m]$. Define a function $\chi_\epsilon\colon [m] \to \{0,1,*\}$ as follows:
\[
 \chi_\epsilon(j) =
 \begin{cases}
     0 & \text{if } \Ex_{\mu|_j}[\phi_j] \leq \epsilon, \\
     1 & \text{if } \Ex_{\mu|_j}[\phi_j] \geq 1-\epsilon, \\
     * & \text{otherwise}.
 \end{cases}
\]
Let $\alpha\colon [m] \to \{0,1\}$ be any assignment consistent with $\chi_\epsilon$. Then
\[
 \Pr_{(y^{(1)},\dots,y^{(m)}) \sim \mu^n}[(\phi_1(y^{(1)}), \dots, \phi_m(y^{(m)})) = \alpha] \geq \gamma.
\]
\end{lemma}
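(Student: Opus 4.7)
The plan is to extend the NAND counting-lemma argument from \Cref{sec:outline-monotone} to an arbitrary predicate $P$ and a mixed target $\alpha$, by combining three ingredients: reducing $\alpha$ to the all-ones case via complementation, decomposing $\mu$ into restriction patterns that leave each coordinate free with positive probability, and applying It Ain't Over Till It's Over along each column.

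First I would reduce to the all-ones target. For each $j \in [m]$, set $\psi_j = \phi_j$ if $\alpha_j = 1$ and $\psi_j = 1 - \phi_j$ if $\alpha_j = 0$. Complementation flips the sign of every nonzero Fourier coefficient, so every single-coordinate influence of $\phi_j^{\leq d}$ is preserved; hence $\psi_j$ remains $(d,\tau)$-regular with respect to $\mu|_j$. The consistency of $\alpha$ with $\chi_\epsilon$ forces $\Ex_{\mu|_j}[\psi_j] \geq \epsilon$, so it suffices to exhibit $\gamma > 0$ with $\Pr_{\mu^n}[\psi_j(y^{(j)}) = 1 \text{ for all } j] \geq \gamma$.

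Next I would build a two-step sampling of $\mu$: write $\mu = \sum_\rho c_\rho\,\mathrm{Res}_\rho$, where $\rho$ ranges over patterns in $\{0,1,\ast\}^m$, $c_\rho \geq 0$, and $\mathrm{Res}_\rho$ fixes the non-$\ast$ coordinates to the values in $\rho$ and independently resamples each starred coordinate $j$ from $\mu|_j$; the crucial requirement is that $q_j := \sum_{\rho : \rho|_j = \ast} c_\rho > 0$ for every $j$. For $P_{\mathrm{NAND}}$ this is the explicit decomposition of \Cref{sec:outline-monotone}. Given such a decomposition, I draw $n$ patterns $\rho_1,\dots,\rho_n$ i.i.d.\ from the pattern distribution, so the set of positions $i$ with $\rho_i|_j = \ast$ is a random subset of $[n]$ of density $q_j$. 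Feeding $q_j$ and $\epsilon/m$ into It Ain't Over Till It's Over produces parameters $d_j \in \mathbb{N}$ and $\tau_j,\delta_j > 0$ such that, provided $\psi_j$ is $(d_j,\tau_j)$-regular, $\Pr_\rho[\Ex_{\mu|_j}[\psi_j|_\rho] \geq \delta_j] \geq 1 - \epsilon/m$. Taking $d := \max_j d_j$ and $\tau := \min_j \tau_j$ gives the uniform parameters in the statement.

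Conditional on the pattern, the starred entries in column $j$ are resampled independently of those in any other column (within each row only one coordinate is starred, and rows are independent), so on the good event
\[
 \Pr_{\mu^n}[\psi_j(y^{(j)}) = 1 \ \forall j \mid \rho] \;=\; \prod_{j=1}^m \Ex_{\mu|_j}[\psi_j|_\rho] \;\geq\; \prod_{j=1}^m \delta_j,
\]
and a union bound over the $m$ failure events costs a further $\epsilon$, yielding $\gamma := (1-\epsilon)\prod_j \delta_j$. The main obstacle I anticipate is the pattern decomposition step: for predicates where fixing all but one coordinate already determines that coordinate (the extreme being $P = \{(0,0),(1,1)\}$), no pattern can star a single coordinate and the naive construction collapses. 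I expect the resolution to require allowing starred blocks of several coordinates at once, resampled jointly from a conditional of $\mu$ rather than as a product of marginals, together with a reverse-hypercontractivity estimate for the resulting correlated product space in the style of \cite{MOS13} to replace the bare column-wise factorization used above; this is plausibly the source of the tower-type dependence of $\delta$ on $\epsilon$ reported in \Cref{thm:intro-main}.
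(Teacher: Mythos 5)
The lemma you are trying to prove is actually \emph{false} as stated, and the paper says so explicitly in the discussion immediately following it. For $P_= = \{(0,0),(1,1)\}$ with uniform $\mu$, take $\phi_1 = \phi_2 = x_1 \oplus \cdots \oplus x_n$: both are $(d,\tau)$-regular for every $d < n$ and have expectation $1/2$, so $\chi_\epsilon \equiv *$ and $\alpha = (0,1)$ is consistent with it. But $y \sim \mu^n$ forces $y^{(1)} = y^{(2)}$, so $\phi_1(y^{(1)}) = \phi_2(y^{(2)})$ always and the probability in the conclusion is exactly $0$. The same happens for $P_\oplus$ with all three functions equal to the full parity. You correctly identified that your pattern decomposition collapses for exactly such predicates (no pattern can star a single coordinate), but you diagnosed this as a technical obstacle to be fixed by jointly resampling blocks plus reverse hypercontractivity. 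That cannot work: the statement is false, not merely hard, and no analytic estimate can recover it without changing the hypotheses.

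The counting lemma the paper actually proves (\Cref{lem:counting}) adds two hypotheses that are essential and absent from your attempt: $P$ has no affine relations (ruling out $P_=, P_\neq, P_\oplus$), and $(\phi_1,\dots,\phi_m)$ is a $(\mu,\gamma)$-approximate generalized polymorphism of $P$ (which also lets the lemma assert $\alpha \in P$ as part of the conclusion). Even with the first hypothesis, your one-shot pattern decomposition does not exist in general, since individual coordinates may still be inflexible — e.g.\ $P = \{(1,0,0),(0,1,0),(0,0,1)\}$ — so the paper instead argues by a double induction on $m$ and on the number of non-$*$ entries of $\chi_\epsilon$. The base case ($\chi_\epsilon \equiv *$) uses the inductive hypothesis to force $P|_{[m]\setminus\{j\}} = \{0,1\}^{m-1}$ for all $j$, then the no-affine-relations hypothesis to extract a single flexible coordinate $j_0$, then applies It Ain't Over Till It's Over only to column $j_0$ and \Cref{lem:hitting} (which is where reverse hypercontractivity from~\cite{MOS13} enters, but playing a role quite different from your ``starred blocks'' suggestion) to the remaining $m-1$ columns, contradicting the approximate-polymorphism hypothesis. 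Your sketch is essentially correct in the special case where every coordinate is flexible, which is what the paper's \Cref{lem:monotone-counting-NAND} and \Cref{lem:alphabet-counting} handle; the novelty of \Cref{lem:counting} is precisely managing the inflexible coordinates via induction, which your proposal does not do.
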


Given such a counting lemma, we could hope to complete the proof as in the monotone setting, this time obtaining $g_j$ from $f_j$ by setting subfunctions to constants according to $\chi_\epsilon$. There are two main issues with this plan:
\begin{enumerate}
    \item The counting lemma does not hold for all predicates $P$.

    A simple example is the predicate $P_= = \{(0,0),(1,1)\}$ with respect to the uniform distribution. If $\phi_1 = \phi_2$ then the lemma holds only for $\alpha \in \{(0,0),(1,1)\}$. The same problem occurs for $P_{\neq} = \{(0,1),(1,0)\}$.

    A more complicated example is the predicate $P_\oplus = \{(a,b,a\oplus b) : a,b \in \{0,1\}\}$ with respect to the uniform distribution. If $\phi_1(x) = \phi_2(x) = \phi_3(x) = x_1 \oplus \cdots \oplus x_n$ (which is $(d,\tau)$-regular for any $d < n$) then the lemma holds only for $\alpha \in P_\oplus$.

    These examples are not surprising: the argument in the monotone case shows that every approximate generalized polymorphism is close to a generalized polymorphism in which moreover each function is a \emph{junta}, implying in particular that every polymorphism is close to a junta. However, this is not the case for the predicates $P_=,P_{\neq},P_\oplus$. 
    
    \item It is not clear how to handle subfunctions which are not regular.

    In the monotone case, it was safe to zero them out, but for general predicates, there is no safe direction.
\end{enumerate}

It turns out that the counting lemma does hold (under the additional assumption that $(\phi_1,\dots,\phi_m)$ is a $(\mu,\gamma)$-approximate generalized polymorphism) as long as $P$ satisfies no affine relations: there is no non-empty set $S$ such that $P|_S = \{ x \in \{0,1\}^S : \bigoplus_{j \in S} x_j = b \}$. If $P$ satisfies the premise of \Cref{thm:intro-alphabet} (for each $j \in [m]$ there is $x^{(j)} \in P$ which remains in $P$ after flipping the $j$'th coordinate) then this can be proved along the lines of the counting lemma for NAND. The general case requires an argument similar to the one used in~\cite{CFMMS2022}, and also uses some ideas from~\cite{Mossel2012}.

In order to handle predicates with affine relations, we first prove \Cref{thm:intro-blr}, a generalization of linearity testing for arbitrary distributions. The proof uses the techniques of~\cite{DFH2025}. \Cref{thm:intro-blr} shows that if $j$ is a coordinate involved in an affine relation, then $f_j$ is close to a (possibly negated) XOR, and we fix $g_j$ to be this XOR. We then remove coordinates from $P$ until all affine relations disappear, enabling us to use the counting lemma.

\smallskip

We handle irregular subfunctions using an approach similar to the proof of the counting lemma for NAND. We find a way to sample $\mu$ in two steps, first sampling a restriction $\rho$ on the coordinates $J^c$ which leaves at most one coordinate free, and then sampling the free coordinate (if any) according to the correct marginal. We show that
\begin{enumerate}[(a)]
\item With constant probability over the choice of $\rho$, if we define $g_j|_{J \gets x^{(j)}}$ by ``rounding'' $f_j|_{J \gets x^{(j)}, J^c \gets \rho}$ according to $\chi_\eta$ (for an appropriate $\eta$) then $(g_1,\dots,g_m)$ is a generalized polymorphism of $P$.
\item For every $(x^{(1)},\dots,x^{(m)}) \in P^J$ such that all $f_j|_{J \gets x^{(j)}}$ are regular, the coloring $\chi_\eta$ (defined according to $f_j|_{J \gets x^{(j)}, J^c \gets \rho}$) is compatible with the coloring $\chi_\epsilon$ (defined according to $f_j|_{J \gets x^{(j)}}$) with probability $1-\epsilon$. 
\end{enumerate}
The second property guarantees that on average $\Pr_{\mu|_j}[g_j \neq f_j] = O(\epsilon)$, allowing us to find a restriction $\rho$ for which both $(g_1,\dots,g_m)$ is a generalized polymorphism of $P$ and $\Pr_{\mu|_j}[g_j \neq f_j] = O(\epsilon)$, completing the proof.

\paragraph{Flexible coordinates}

During the proof of \Cref{thm:intro-main}, we distinguish between two types of coordinates. A coordinate $j \in [m]$ is \emph{flexible} if there exists a partial input, leaving only the $j$'th coordinate unset, both of whose completions belong to the predicate. If no such input exists, then the coordinate is \emph{inflexible}.

If the predicate is monotone and no coordinate is constant, then all of its coordinates are flexible. For functional predicates, which are predicates of the form $\{ (x,f(x)) : x \in \{0,1\}^m \}$ for some $f\colon \{0,1\}^m \to \{0,1\}$, the first $m$ coordinates are flexible, and the final one is not. For the predicates $\{(1,0,0),(0,1,0),(0,0,1)\}$ and $P_{m,b}$ (all $m$-ary vectors with parity $b$), all coordinates are inflexible.

The counting lemma implies that if $(\phi_1,\dots,\phi_m)$ is a regular generalized polymorphism of a predicate without affine relations then $\phi_j$ is almost constant for every inflexible coordinate $j$. This is in fact the only part of the counting lemma which is employed in the rest of the proof.

The counting lemma is proved by induction on $m$ and on the number of coordinates $j$ such that $\phi_j$ is far from constant. It uses the following dichotomy for \emph{almost full predicates}, which are predicates $P$ such that $P|_{[m] \setminus \{j\}} = \{0,1\}^{[m] \setminus \{j\}}$ for all $j$:

\smallskip

\emph{Either one of the coordinates that $P$ depends on is flexible, or $P$ satisfies an affine relation.}

\smallskip

The lack of an analogous property for larger alphabets precludes us from extending \Cref{thm:intro-main} to that setting in full generality. Instead, \Cref{thm:intro-alphabet} assumes that all coordinates are flexible, and its proof is a simplification of the proof of \Cref{thm:intro-main}.

\section{Monotone case} \label{sec:monotone}

In this section we prove \Cref{thm:intro-monotone}.

\thmintromonotone*

The proof closely follows the outline in \Cref{sec:outline-monotone}: we first prove an appropriate counting lemma using It Ain't Over Till It's Over, and then deduce the result using Jones' regularity lemma. Both of these results use the concept of $(d,\tau)$-regularity.

\begin{definition}[Regularity]
Let $d \in \mathbb{N}$, $\tau > 0$, and $p \in (0, 1)$, and recall that $\mu_p$ is the product distribution with $\Pr_{x \sim \mu_p}[x_i = 1] = p$.

A function $f\colon \{0,1\}^n \to \{0,1\}$ is $(d,\tau)$-regular with respect to $\mu_p$ if $\Inf_i[f^{\leq d}] \leq \tau$ for all $i \in [n]$, where
\[
 \Inf_i[f^{\leq d}] = \sum_{\substack{|S| \leq d \\ i \in S}} \hat{f}(S)^2,
\]
and $\hat{f}(S)$ is the Fourier expansion of $f$ with respect to $\mu_p$, that is
\[
 f(x) = \sum_{S \subseteq [n]} \hat{f}(S) \prod_{i \in S} \frac{x_i - p}{\sqrt{p(1-p)}}.
\]
\end{definition}

We already stated It Ain't Over Till It's Over in \Cref{sec:outline-monotone}. Here we restate it with explicit parameters, which can be read from the proof in~\cite{MOO10}.

\begin{theorem}[It Ain't Over Till It's Over] \label{thm:it-aint-over}
For every $p,q \in (0,1)$ and $\epsilon > 0$ the following holds for some constant $C = C(p,q)$ and $d = \Theta(\log(1/\epsilon))$, $\tau = \Theta(\epsilon^C)$, and $\delta = \Theta(\epsilon^C)$.

Let $\rho$ be a random restriction obtained by sampling each coordinate $i \in [n]$ independently according to the following law:
\[
 \rho_i =
 \begin{cases}
    0 & \text{w.p. } (1-p)(1-q), \\
    1 & \text{w.p. } p(1-q), \\
    * & \text{w.p. } q.
 \end{cases}
\]
If $f\colon \{0,1\}^n \to \{0,1\}$ is $(d,\tau)$-regular with respect to $\mu_p$ and $\Ex_{\mu_p}[f] \geq \epsilon$ then
\[
 \Pr_\rho[\Ex_{\mu_p}[f|_\rho] \geq \delta] \geq 1-\epsilon.
\]
\end{theorem}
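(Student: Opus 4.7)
The plan is to follow the Mossel-O'Donnell-Oleszkiewicz approach~\cite{MOO10}, reducing the statement to a Gaussian anti-concentration inequality via truncation and the invariance principle. Let $M(\rho) := \Ex_{\mu_p}[f|_\rho]$; by Fubini, $\Ex_\rho[M(\rho)] = \Ex_{\mu_p}[f] \geq \epsilon$, so the target is a lower bound on $\Pr_\rho[M(\rho) \geq \delta]$. The key observation is that only Fourier characters of $f$ whose support $S$ lies entirely in the fixed coordinates of $\rho$ survive the averaging, so the Fourier expansion inherited from $f$ satisfies
\[
 \Ex_\rho[M(\rho)^2] = \sum_S \hat{f}(S)^2 (1-q)^{|S|},
\]
dampening high-degree contributions geometrically.

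First, I would truncate $f$ at degree $d = \Theta(\log(1/\epsilon))$: the identity above gives $\Ex_\rho[(M - M^{\leq d})^2] \leq (1-q)^d \cdot \|f\|_2^2 \leq \mathrm{poly}(\epsilon)$, so a Markov bound lets me replace $M$ by $M^{\leq d}$ up to an additive $\delta$-error on all but an $\epsilon/2$-fraction of restrictions. After truncation, $M^{\leq d}$ is a degree-$d$ polynomial on the restriction space whose individual influences inherit the $\tau$-bound from the $(d,\tau)$-regularity of $f$. Second, I would apply the invariance principle to this low-degree, low-influence polynomial, matching its distribution with that of a degree-$d$ Gaussian polynomial $F$ having mean $\geq \epsilon$ and bounded variance; a Carbery-Wright anti-concentration bound on $F$ (or equivalently a hypercontractive lower-tail inequality for degree-$d$ Gaussian polynomials) then shows $\Pr[F \geq \delta] \geq 1 - \epsilon/2$, and transferring back through the invariance and truncation steps yields the claim.

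The main obstacle is parameter accounting: a direct application of invariance introduces a L\'evy-distance error of order $\tau^{\Omega(1/d)}$, which for $d = \Theta(\log(1/\epsilon))$ would naively force $\tau$ to be sub-polynomial in $\epsilon$. To obtain the asserted $\tau = \Theta(\epsilon^C)$ with $C$ depending only on $p,q$, I expect one must either use a sharper anti-concentration tailored to the second-moment structure of $M^{\leq d}$, or substitute a direct reverse-hypercontractive argument on the $p$-biased cube applied to the indicator $\mathbb{1}[M^{\leq d} < \delta]$, bypassing invariance altogether. The constants arising from the hypercontractive inequality for $\mu_p$ and from the $q$-dependent noise rate are then absorbed into the global constant $C = C(p,q)$.
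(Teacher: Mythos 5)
The paper does not prove \Cref{thm:it-aint-over}; it imports it verbatim from~\cite{MOO10} and only restates the parameters, so there is no internal proof to compare against. Your high-level plan --- writing $M(\rho) = \Ex_{\mu_p}[f|_\rho]$, noting $\Ex_\rho[M] = \Ex_{\mu_p}[f]$ and $\Ex_\rho[M^2] = \sum_S \hat f(S)^2(1-q)^{|S|}$, truncating at degree $d = \Theta(\log(1/\epsilon))$ so that $\Ex_\rho[(M - M^{\le d})^2] \le (1-q)^d$, and then passing to a Gaussian analogue via invariance --- is indeed the skeleton of the MOO10 argument and the Fourier bookkeeping is correct.

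The concrete gap is in the final step. You invoke ``Carbery--Wright anti-concentration'' (or, as an asserted equivalent, ``a hypercontractive lower-tail inequality'') to conclude $\Pr[F \ge \delta] \ge 1 - \epsilon/2$ for the Gaussian surrogate $F$ with $\Ex[F] \ge \epsilon$ and $\mathrm{Var}(F) \le 1$. Neither tool gives this. Carbery--Wright bounds the probability that $F$ lands in a small window around a point; it says nothing about the one-sided event $\{F < \delta\}$. And a generic lower-tail bound of the form $\Pr[F < \Ex[F] - t\sqrt{\mathrm{Var}(F)}] \le \exp(-ct^{2/d})$ fails to close the argument: with $\Ex[F]\approx\epsilon$, $\mathrm{Var}(F)$ as large as $\Theta(\epsilon)$, and $d = \Theta(\log(1/\epsilon))$, the bound evaluates to a fixed constant, not $O(\epsilon)$. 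Indeed, a degree-$1$ Gaussian $F = \epsilon + \sqrt{\epsilon}\,Z$ has $\Pr[F < 0]$ tending to $1/2$, so mean-plus-variance information alone is categorically insufficient. The ingredient you are missing is the structural fact that $M(\rho) \in [0,1]$ because $f$ is Boolean (it is used as $M \ge 0$). The MOO10 argument pushes this constraint through the invariance principle (applied to a smooth mollifier of the lower-tail indicator, not to raw point anticoncentration): the Gaussian surrogate inherits near-nonnegativity, and it is the combination of near-nonnegativity, $\Ex[F] \ge \epsilon$, low degree, and small variance forced by hypercontractivity ($\Stab_{1-q}[f] \le \Ex[f]^{1+c(p,q)}$) that kills the lower tail. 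Your proposal never mentions boundedness of $M$, and without it the argument cannot be completed. Your concern about the $\tau^{\Omega(1/d)}$ loss in invariance is legitimate, and it is precisely this interplay (trading off $d$, $\tau$, and the mollifier scale) that gives the $C = C(p,q)$ exponent in the stated parameters.
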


In order to obtain the best possible parameters, we use Jones' regularity lemma in the following form, essentially proved in~\cite{CFMMS2022}. For definiteness, we reprove it in \Cref{sec:regularity}.

\begin{theorem}[Jones' regularity lemma] \label{thm:jones-was-tree}
For every $\epsilon, \tau > 0$, $m, d \in \mathbb{N}$ and $p_1,\dots,p_m \in (0,1)$ there exists a parameter $M$ for which the following holds.

For all functions $f_1,\dots,f_m\colon \{0,1\}^n \to \{0,1\}$ there exists a set $J$ of size at most $M$ such that for all $j$,
\[
 \Pr_{x \sim \mu_j}[f_j|_{J \gets x} \text{ is $(d,\tau)$-regular with respect to $\mu_{p_j}$}] \geq 1 - \epsilon.
\]
\end{theorem}

\subsection{Counting lemma} \label{sec:monotone-counting-lemma}

We start by stating and proving the counting lemma that we use.

\begin{lemma}[Counting lemma for monotone predicates] \label{lem:monotone-counting}
Let $P \subseteq \{0,1\}^m$ be a non-empty monotone predicate in which no coordinate is constant (i.e., for every $j \in [m]$ there is $x \in P$ with $x_j = 1$). Let $\mu$ be a distribution on $P$ with full support.
For every $\epsilon > 0$ there exists a constant $C = C(P,\mu)$ such that the following holds for $d = \Theta(\log(1/\epsilon))$, $\tau = \Theta(\epsilon^C)$, and $\gamma = \Theta(\epsilon^{Cm})$.

Let $\phi_1,\dots,\phi_m\colon \{0,1\}^n \to \{0,1\}$ be functions such that $\phi_j$ is $(d,\tau)$-regular with respect to $\mu|_j$. Define $\alpha\colon [m] \to \{0,1\}$ as follows:
\[
 \alpha(j) =
 \begin{cases}
     0 & \text{if } \Ex_{\mu|_j}[\phi_j] \leq \epsilon, \\
     1 & \text{otherwise}.
 \end{cases}
\]
If $\alpha \notin P$ then
\[
 \Pr_{(y^{(1)},\dots,y^{(m)}) \sim \mu^n}[(\phi_1(y^{(1)}), \dots, \phi_m(y^{(m)})) \notin P] \geq \gamma.
\]
\end{lemma}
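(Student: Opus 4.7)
The plan is to follow the NAND counting lemma strategy sketched in \Cref{sec:outline-monotone}. Since $P$ is monotone and non-empty we have $\mathbf{0} \in P$; and since no coordinate is constant, for each $j \in [m]$ some $w \in P$ has $w_j = 1$, so by downward-closure $e_j \in P$. In particular, the partial assignment ``$\mathbf{0}^{j \gets *}$'' (every coordinate set to $0$ except the $j$-th, which is left free) has both of its completions in $P$, for every $j$.

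First I would set up the restriction. Pick a constant $q > 0$ depending on $\mu$ and $m$ small enough that $(1-q)\mu(\mathbf{0}) \geq (m-1)q$, and define a distribution on one-coordinate configurations by letting $\Pr[\rho_i = w] = (1-q)\mu(w)$ for $w \in P \setminus \{\mathbf{0}\}$, $\Pr[\rho_i = \mathbf{0}] = (1-q)\mu(\mathbf{0}) - (m-1)q$, and $\Pr[\rho_i = \mathbf{0}^{j \gets *}] = q$ for each $j \in [m]$. Sample $\rho = (\rho_1,\dots,\rho_n)$ i.i.d.\ and then fill in each $*$ by an independent draw from the appropriate marginal $\mu|_j$. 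A direct calculation shows (i) the completed distribution is $\mu^n$, and (ii) the restriction $\rho^{(j)}$ induced on $\phi_j$'s input is i.i.d.\ of exactly the form required by \Cref{thm:it-aint-over}: each coordinate is $*$ with probability $q$, and otherwise sampled from $\mu|_j$.

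Next, because each $\rho_i$ leaves at most one position free, the free-bit index sets $I_j = \{i : \rho_i = \mathbf{0}^{j \gets *}\}$ are pairwise disjoint, so conditional on $\rho$ the random variables $\phi_j(y^{(j)})$ are mutually independent. For each $j$ with $\alpha(j) = 1$ (equivalently, $\Ex_{\mu|_j}[\phi_j] > \epsilon$), I apply \Cref{thm:it-aint-over} with $p := \mu|_j(1)$, $q := q$, and $\epsilon/(2m)$ in place of $\epsilon$, obtaining $d_j = \Theta(\log(1/\epsilon))$, $\tau_j = \Theta(\epsilon^{C_j})$, and $\delta_j = \Theta(\epsilon^{C_j})$ (with $C_j = C(\mu|_j, q)$) such that $\Pr_\rho[\Ex_{\mu|_j}[\phi_j|_{\rho^{(j)}}] \geq \delta_j] \geq 1 - \epsilon/(2m)$. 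Taking $d := \max_j d_j$, $\tau := \min_j \tau_j$, and a union bound yields a ``good'' event on $\rho$ of probability $\geq 1/2$ on which these lower bounds hold simultaneously for every $j$ with $\alpha(j) = 1$.

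On the good event, independence gives
\[
\Pr\bigl[\phi_j(y^{(j)}) = 1 \text{ for every } j \text{ with } \alpha(j) = 1 \,\big|\, \rho\bigr] \;\geq\; \prod_{j:\alpha(j)=1} \delta_j.
\]
When this event occurs, the output $(\phi_1(y^{(1)}),\dots,\phi_m(y^{(m)}))$ dominates $\alpha$ pointwise; since $P$ is downward-closed and $\alpha \notin P$, the output lies outside $P$. Combining yields $\Pr[(\phi_1(y^{(1)}),\dots,\phi_m(y^{(m)})) \notin P] \geq \tfrac12 \prod_j \delta_j = \Theta(\epsilon^{Cm})$ with $C = \max_j C_j = C(P,\mu)$, as required. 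The main obstacle is getting the coupling right in the first step: setting up the restriction so that the distribution is valid (this uses $e_j \in P$ for every $j$), its completion is exactly $\mu^n$, and the free bits of different $y^{(j)}$'s land on disjoint coordinate sets so that the $\phi_j(y^{(j)})$'s are independent given $\rho$. Once this coupling is in place, the remainder is a direct product of independent applications of It Ain't Over Till It's Over, with monotonicity of $P$ used only at the end to convert pointwise domination into non-membership in $P$.
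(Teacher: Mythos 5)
Your overall strategy — a single carefully designed product restriction $\rho$ that leaves at most one coordinate free per position, followed by one application of It Ain't Over per coordinate $j$ with $\alpha(j)=1$, and a downward-closure argument at the end — is sound, and it is genuinely different from the paper's route. The paper first proves a counting lemma for the pure $\NAND$ predicate $\{x : x \neq (1,\dots,1)\}$ and then derives the general monotone case by observing that $\alpha \notin P$ must dominate some maxterm $1_S$ with $|S| \geq 2$, applying the $\NAND$ lemma to the restricted distribution $\mu|_S$. Your one-shot construction saves the maxterm reduction, at the price of building the combined restriction $\rho$ directly over all $m$ coordinates; the rest of the argument (disjoint free-bit sets, conditional independence, union bound, pointwise domination of $\alpha$ plus downward-closure) matches the paper's and is correct.

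However, the restriction weights you wrote down are wrong: the completed distribution is \emph{not} $\mu$. You assign probability $(1-q)\mu(w)$ to every nonzero $w \in P$, $(1-q)\mu(\mathbf 0) - (m-1)q$ to $\mathbf 0$, and $q$ to each $\mathbf 0^{j\gets *}$. But $\mathbf 0^{j\gets *}$ completes to $e_j$ with probability $p_j = \mu|_j(1)$, so the completed mass at $e_j$ is $(1-q)\mu(e_j) + q p_j$, which equals $\mu(e_j)$ only if $p_j = \mu(e_j)$, i.e.\ only if $e_j$ is the \emph{only} element of $P$ with $j$-th coordinate~$1$. This fails for essentially every monotone $P$ with elements of weight $\geq 2$ (e.g.\ $P = \{0,1\}^3 \setminus \{(1,1,1)\}$ with $\mu$ uniform). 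Since the lemma's conclusion is a statement about $\mu^n$, this breaks the argument. The fix is the same bookkeeping the paper does in \Cref{lem:monotone-counting-NAND}: set $\Pr[\rho_i = w] = \mu(w)$ for $w \in P \setminus \{\mathbf 0, e_1,\dots,e_m\}$, $\Pr[\rho_i = e_j] = \mu(e_j) - p_j q$, $\Pr[\rho_i = \mathbf 0] = \mu(\mathbf 0) - \sum_j (1-p_j) q$, and $\Pr[\rho_i = \mathbf 0^{j\gets *}] = q$, with $q > 0$ small enough that all weights stay positive (possible since $\mathbf 0, e_j \in P$ and $\mu$ has full support). One readily checks that this completes to $\mu$ and that the induced restriction on the $j$-th coordinate is $*$ with probability $q$ and otherwise $\mu|_j$-distributed, so the rest of your proof then goes through unchanged.
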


We first prove the lemma in the special case where $P$ is a NAND predicate, and then deduce the general case.

\begin{lemma}[Counting lemma for NAND] \label{lem:monotone-counting-NAND}
Let $m \ge 2$ and let $P_{\NAND} = \{ x \in \{0,1\}^m : x \neq (1,\dots,1) \}$. Let $\mu$ be a distribution on $P_{\NAND}$ with full support.
For every $\epsilon > 0$ there exists a constant $C = C(P,\mu)$ such that the following holds for $d = \Theta(\log(1/\epsilon))$, $\tau = \Theta(\epsilon^C)$, and $\gamma = \Theta(\epsilon^{Cm})$.

Let $\phi_1,\dots,\phi_m\colon \{0,1\}^n \to \{0,1\}$ be functions such that $\phi_j$ is $(d,\tau)$-regular with respect to $\mu|_j$ and $\Ex_{\mu|_j}[\phi_j] \geq \epsilon$. Then
\[
 \Pr_{(y^{(1)},\dots,y^{(m)}) \sim \mu^n}[(\phi_1(y^{(1)}), \dots, \phi_m(y^{(m)})) = (1,\dots,1)] \geq \gamma.
\]
\end{lemma}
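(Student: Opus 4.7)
My plan is to lift the two-step sampling used in the $m=2$ NAND outline to arbitrary $m$. The goal is to produce a random restriction $\rho \in (\{0,1,*\}^m)^n$, sampled coordinate-wise from some single distribution on $\{0,1,*\}^m$, with three properties: (a) filling in each $*$ independently from the matching marginal of $\mu$ reproduces $\mu^n$ exactly; (b) at every position $i$ at most one of $\rho^{(1)}_i,\dots,\rho^{(m)}_i$ is $*$, so the sets $T_j := \{i : \rho^{(j)}_i = *\}$ are pairwise disjoint; (c) the marginal of $\rho^{(j)}_i$ is the restriction format used in \Cref{thm:it-aint-over}, namely $*$ with probability $q$ and otherwise distributed as $\mu|_j$. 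Given (a)--(c), applying It Ain't Over Till It's Over to each $\phi_j$ individually, union-bounding, and exploiting conditional independence will close the proof.

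For the construction I put mass $q$ on each of the $m$ single-star atoms $s_j := (0,\dots,0,*,0,\dots,0)$ (with $*$ in position $j$), and mass $1-mq$ on a ``corrected'' distribution $\tilde\mu$ on $P_\NAND$ defined by
\[
 \tilde\mu(w) = \frac{\mu(w)}{1-mq} \ \text{ if } |w| \geq 2, \qquad
 \tilde\mu(e_j) = \frac{\mu(e_j) - q\mu|_j(1)}{1-mq}, \qquad
 \tilde\mu(0^m) = \frac{\mu(0^m) - q\sum_j \mu|_j(0)}{1-mq}.
\]
Full support of $\mu$ makes every numerator positive for sufficiently small $q = q(\mu) > 0$, and the $\tilde\mu$-values sum to $1$ by a one-line identity. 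Since the single-star atoms set all non-starred coordinates to $0$, every completion lies in $P_\NAND$, giving (b) automatically. A direct computation verifies (a) and (c): the $*$ probability is exactly $q$, and the contributions from the $m-1$ off-$j$ single-star atoms (which force $\rho^{(j)}_i = 0$) combine with the $\tilde\mu$-piece to give precisely $(1-q)\mu|_j$ on the non-star event.

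With this sampling in place, I apply \Cref{thm:it-aint-over} to each $\phi_j$ with the above $q$ and with failure parameter $\epsilon' := \min(\epsilon, 1/(2m))$. This yields $d = \Theta(\log(1/\epsilon))$ and $\tau, \delta = \Theta(\epsilon^C)$ (the exponent $C$ depending on $q$ and on the biases $p_j = \mu|_j(1)$, hence on $P$ and $\mu$) such that $\Pr_\rho[\Ex_{\mu|_j}[\phi_j|_{\rho^{(j)}}] \geq \delta] \geq 1 - \epsilon'$. A union bound produces an event of probability at least $1/2$ on which $\Ex_{\mu|_j}[\phi_j|_{\rho^{(j)}}] \geq \delta$ for every $j$. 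On this event the disjointness of the $T_j$ from (b) makes the fresh fills independent across $j$, so $\phi_1(y^{(1)}),\dots,\phi_m(y^{(m)})$ are conditionally independent given $\rho$, yielding
\[
 \Pr\bigl[\phi_1(y^{(1)}) = \cdots = \phi_m(y^{(m)}) = 1 \,\big|\, \rho\bigr] = \prod_{j=1}^m \Ex_{\mu|_j}[\phi_j|_{\rho^{(j)}}] \geq \delta^m.
\]
Removing the conditioning gives the desired $\gamma = \Theta(\delta^m) = \Theta(\epsilon^{Cm})$.

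The main obstacle is steps (a)--(c): I need a single joint restriction distribution that reproduces $\mu$, leaves disjoint free sets across the $m$ functions, and simultaneously hands each individual $\rho^{(j)}$ in the product form required by \Cref{thm:it-aint-over}. The single-star-plus-$\tilde\mu$ ansatz above is the natural candidate, and the verifications reduce to elementary algebraic identities using full support of $\mu$; once these are in hand, the rest is conditional independence plus a union bound on top of It Ain't Over Till It's Over.
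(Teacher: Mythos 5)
Your proposal is correct and follows essentially the same path as the paper's proof: the paper samples the restriction coordinate-wise from exactly the distribution you describe (written out explicitly rather than as your mixture $q\cdot\text{(single-star atoms)} + (1-mq)\tilde\mu$, but these are identical), then applies It Ain't Over Till It's Over to each $\phi_j$ with $p := p_j$ and the same $q$, union-bounds, and uses conditional independence of the fills to conclude $\gamma = (1-m\epsilon)\delta_1\cdots\delta_m$. Your replacement of $\epsilon$ by $\epsilon' = \min(\epsilon, 1/(2m))$ in the union bound is a harmless cosmetic change absorbed into the constants.
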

\begin{proof}
The proof closely follows the proof of the counting lemma in \Cref{sec:outline-monotone}.

Let $q > 0$ be a small enough parameter. We sample a random restriction $\rho \in (\{0,1,*\}^m)^n$ by sampling each coordinate independently according to the following law, where $p_j = \Pr[\mu|_j = 1]$, $0$ is the zero vector, $e_1 = (1,0,\dots,0)$, $s_1 = (*,0,\dots,0)$, and $e_j,s_j$ are defined analogously by making the special coordinate the $j$'th one:
\begin{itemize}
\item $\rho_i = 0$ with probability $\mu(0) - \sum_j (1-p_j) q$.
\item For each $j$, $\rho_i = e_j$ with probability $\mu(e_j) - p_j q$.
\item For each $w \neq 0,e_1,\dots,e_m$, $\rho_i = w$ with probability $\mu(w)$.
\item For each $j$, $\rho_i = s_j$ with probability $q$.
\end{itemize}

Given $\rho$, we can obtain a sample of $\mu$ by sampling a star in position $j$ according to $\mu|_j$. We denote this distribution by $\mu|_\rho$.

Another crucial property of $\rho$ is that the distribution of $\rho|_j$ given that $\rho|_j \neq *$ is the same as the distribution of $\mu|_j$. Indeed,
\[
 \Pr[\rho|_j = 1 \mid \rho|_j \neq *] = \frac{p_j - p_j q}{1 - q} = p_j.
\]

Apply \Cref{thm:it-aint-over} (It Ain't Over Till It's Over) to each $\phi_j$ with $p := p_j$, $q := q$, $\epsilon := \epsilon$ to obtain $d_j,\tau_j,\delta_j$ such that if $\phi_j$ is $(d_j,\tau_j)$-regular and $\Ex_{\mu|_j}[\phi_j] \geq \epsilon$ then
\[
 \Pr_{\rho}[\Ex_{\mu|_j}[\phi_j|_{\rho|_j}] \geq \delta_j] \geq 1 - \epsilon.
\]

Choose $d = \max(d_1,\dots,d_m)$, $\tau = \min(\tau_1,\dots,\tau_m)$, so that each $\phi_j$ is $(d_j,\tau_j)$-regular. Applying the union bound,
\[
 \Pr_{\rho}[\Ex_{\mu|_j}[\phi_j|_{\rho|_j}] \geq \delta_j \text{ for all } j] \geq 1 - m\epsilon,
\]
and so
\[
 \Pr_{(y^{(1)},\dots,y^{(m)}) \sim \mu^n}[(\phi_1(y^{(1)}), \dots, \phi_m(y^{(m)})) = (1,\dots,1)] \geq (1-m\epsilon) \delta_1 \cdots \delta_m.
\]
The proof concludes by taking $\gamma = (1-m\epsilon) \delta_1 \cdots \delta_m$.
\end{proof}

We prove \Cref{lem:monotone-counting} by applying \Cref{lem:monotone-counting-NAND} to each maxterm of $P$ (recall that a maxterm is $x \notin P$ such that $y \in P$ for all $y \lneq x$).

\begin{proof}[Proof of \Cref{lem:monotone-counting}]
Let $\mathcal{M}$ be the collection of maxterms of $P$. Since no coordinate is constant, each maxterm is of the form $1_S$ for $|S| \geq 2$.

For each $1_S \in \mathcal{M}$, apply \Cref{lem:monotone-counting-NAND} with $\mu = \mu|_S$ and $\epsilon := \epsilon$ to obtain $d_S,\tau_S,\gamma_S$ such that the following holds: if $\phi_j$ is $(d_S,\tau_S)$-regular and $\alpha_j = 1$ for all $j \in S$ then
\[
 \Pr_{(y^{(1)},\dots,y^{(m)}) \sim \mu_n}[\phi_j = 1 \text{ for all } j \in S] \geq \gamma_S.
\]

We choose $d = \max(d_S : 1_S \in \mathcal{M})$,  $\tau = \min(\tau_S : 1_S \in \mathcal{M})$, and $\gamma = \min(\gamma_S : 1_S \in \mathcal{M})$.

If $\alpha \notin P$ then there exists a maxterm $1_S \in \mathcal{M}$ such that $\alpha_j = 1$ for all $j \in S$. \Cref{lem:monotone-counting-NAND} implies that
\[
 \Pr_{(y^{(1)},\dots,y^{(m)}) \sim \mu_n}[\phi_j(y^{(j)}) = 1 \text{ for all } j \in S] \geq \gamma_S \geq \gamma.
\]
Since $1_S$ is a maxterm, this completes the proof.
\end{proof}

\subsection{Main result} \label{sec:monotone-main}

In this section we complete the proof of \Cref{thm:intro-monotone} using Jones' regularity lemma. We first assume that no coordinate of $P$ is constant, and then show how to get rid of this assumption.

\begin{proof}[Proof of \Cref{thm:intro-monotone} assuming no coordinate of $P$ is constant]

Since no coordinate of $P$ is constant, we can apply \Cref{lem:monotone-counting} (the counting lemma) with $\epsilon := \epsilon/2$ to obtain $d,\tau,\gamma$ for which the lemma holds.

We apply \Cref{thm:jones-was-tree} (Jones's regularity lemma) with $\epsilon := \epsilon/2$, $p_1,\dots,p_m$ given by $p_j = \Pr[\mu|_j = 1]$, and the values of $d,\tau$ obtained from \Cref{lem:monotone-counting} to obtain a set $J$ of size at most $M$.

We define the functions $g_j$ as follows. For each setting $x \in \{0,1\}^J$,
\[
 g_j|_{J \gets x}(y) =
 \begin{cases}
     0 & \text{if } f_j|_{J \gets x} \text{ is not $(d,\tau)$-regular or }\Ex_{\mu|_j}[f_j|_{J \gets x}] \leq \epsilon/2, \\
     f_j|_{J \gets x}(y) & \text{otherwise}.
 \end{cases}
\]

If we sample $x$ according to $\mu|_j$ then according to Jones' regularity lemma, $f_j|_{J \gets x}$ is $(d,\tau)$-regular (with respect to $\mu|_j$) with probability at least $1-\epsilon/2$. This shows that $\Pr_{\mu|_j}[g_j \neq f_j] \leq \epsilon/2 + \epsilon/2 = \epsilon$.

It remains to show that $(g_1,\dots,g_m)$ is an extended polymorphism of $P$ for small enough $\delta$. If this is not the case, then there exists an assignment $(x^{(1)},\dots,x^{(m)}) \in P^J$ such that $(g_1|_{J \gets x^{(1)}}, \dots, g_m|_{J \gets x^{(m)}})$ is not an extended polymorphism of $P$.

Apply \Cref{lem:monotone-counting} (the counting lemma) to $\phi_j = f_j|_{J \gets x^{(j)}}$. Observe that for all $(y^{(1)},\dots,y^{(m)}) \in P^{J^c}$,
$(g_1|_{J \gets x^{(1)}}(y^{(1)}),\allowbreak \dots,\allowbreak g_m|_{J \gets x^{(m)}}(y^{(m)})) \leq \alpha$, where $\alpha$ is the assignment defined in the lemma. Since $(g_1|_{J \gets x^{(1)}},\allowbreak \dots,\allowbreak g_m|_{J \gets x^{(m)}})$ is not an extended polymorphism of $P$ and $P$ is monotone, necessarily $\alpha \notin P$. Therefore the lemma shows that
\[
 \Pr_{(y^{(1)},\dots,y^{(m)}) \sim \mu^{J^c}}[(f_1|_{J \gets x^{(1)}}(y^{(1)}),\dots,f_m|_{J \gets x^{(m)}}(y^{(m)})) \notin P] \geq \gamma,
\]
implying that $(f_1,\dots,f_m)$ is not a $(\mu,\delta)$-approximate generalized polymorphism for $\delta = \min(\mu)^M \gamma/2$, where $\min(\mu) = \min_{w \in \mu}(\mu(w))$. Choosing this value of $\delta$ completes the proof.
\end{proof}

The proof of \Cref{thm:intro-monotone} in full generality readily follows. Let $C_0$ be the set of coordinates $j$ such that $x_j = 0$ for all $x \in P$. The premise of the theorem implies that $f_j(0,\dots,0) = 0$ for all $j \in C_0$.

Let $Q$ be the predicate obtained by removing the coordinates in $C_0$. We apply the foregoing to the predicate $Q$, obtaining functions $g_j$ for all $j \notin C_0$. Defining $g_j = f_j$ for all $j \in C_0$ completes the proof.

\section{Main theorem} \label{sec:main}

In this section we prove \Cref{thm:intro-main}.

\thmintromain*

The proof relies on \Cref{thm:intro-blr}, the special case of \Cref{thm:intro-main} for the predicates $P_{m,b} = \{ x \in \{0,1\}^m : x_1 \oplus \cdots \oplus x_m = b \}$ for $m \geq 3$.

The proof also relies on a two-sided version of It Ain't Over Till It's Over, which immediately follows from the one-sided version, \Cref{thm:it-aint-over}, by applying it to both $f$ and $1-f$.

\begin{theorem}[It Ain't Over Till It's Over] \label{thm:it-aint-over-two-sided}
For every $p,q \in (0,1)$ and $\epsilon > 0$ the following holds for some constant $C = C(p,q)$ and $d = \Theta(\log(1/\epsilon))$, $\tau = \Theta(\epsilon^C)$, and $\delta = \Theta(\epsilon^C)$.

Let $\rho$ be a random restriction obtained by sampling each coordinate $i \in [n]$ independently according to the following law:
\[
 \rho_i =
 \begin{cases}
    0 & \text{w.p. } (1-p)(1-q), \\
    1 & \text{w.p. } p(1-q), \\
    * & \text{w.p. } q.
 \end{cases}
\]
If $f$ is $(d,\tau)$-regular with respect to $\mu_p$ and $\epsilon \leq \Ex_{\mu_p}[f] \leq 1-\epsilon$ then
\[
 \Pr_\rho[\delta \leq \Ex_{\mu_p}[f|_\rho] \leq 1-\delta] \geq 1-\epsilon.
\]
\end{theorem}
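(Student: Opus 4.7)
The proof is essentially a two-fold application of the one-sided version (\Cref{thm:it-aint-over}) to $f$ and to $1-f$, combined with a union bound, so there is no substantive obstacle; the plan is mostly to make sure the parameters line up.

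First I would observe that $1-f\colon \{0,1\}^n \to \{0,1\}$ has the same Fourier expansion as $f$ except on the empty set: if $f = \sum_S \hat f(S) \chi_S$ in the $\mu_p$-basis, then $\widehat{1-f}(\emptyset) = 1 - \hat f(\emptyset)$ and $\widehat{1-f}(S) = -\hat f(S)$ for $S \neq \emptyset$. In particular $\Inf_i[(1-f)^{\leq d}] = \Inf_i[f^{\leq d}]$ for every $i \in [n]$ and every $d \geq 1$, so $1-f$ is $(d,\tau)$-regular with respect to $\mu_p$ whenever $f$ is. Likewise the restriction operation is linear, so $(1-f)|_\rho = 1 - f|_\rho$ and hence $\Ex_{\mu_p}[(1-f)|_\rho] = 1 - \Ex_{\mu_p}[f|_\rho]$.

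Next I would invoke \Cref{thm:it-aint-over} twice, each time with error parameter $\epsilon/2$ in place of $\epsilon$, obtaining some $d = \Theta(\log(1/\epsilon))$, $\tau = \Theta(\epsilon^C)$, $\delta = \Theta(\epsilon^C)$ (the asymptotic forms are preserved by the halving). Since $\epsilon \leq \Ex_{\mu_p}[f] \leq 1-\epsilon$, both $\Ex_{\mu_p}[f] \geq \epsilon/2$ and $\Ex_{\mu_p}[1-f] \geq \epsilon/2$. Applying the one-sided theorem to $f$ gives $\Pr_\rho[\Ex_{\mu_p}[f|_\rho] \geq \delta] \geq 1 - \epsilon/2$, while applying it to $1-f$ (which is equally regular by the observation above) gives $\Pr_\rho[\Ex_{\mu_p}[(1-f)|_\rho] \geq \delta] \geq 1 - \epsilon/2$, i.e.\ $\Pr_\rho[\Ex_{\mu_p}[f|_\rho] \leq 1-\delta] \geq 1-\epsilon/2$.

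A union bound over these two failure events of probability at most $\epsilon/2$ each then yields
\[
 \Pr_\rho\bigl[\delta \leq \Ex_{\mu_p}[f|_\rho] \leq 1-\delta\bigr] \geq 1 - \epsilon,
\]
which is the desired conclusion. The only thing to double-check is that the parameters $d$, $\tau$, $\delta$ produced for $f$ and for $1-f$ can be chosen equal (they can, since they depend only on $p$, $q$, and the error parameter, and we fed in $\epsilon/2$ on both sides), so no further minimization across two parameter choices is required.
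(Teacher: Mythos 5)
Your proof is correct and matches the paper's intended argument exactly: the paper states that the two-sided version ``immediately follows from the one-sided version, \Cref{thm:it-aint-over}, by applying it to both $f$ and $1-f$,'' which is precisely what you carry out. Your bookkeeping (halving $\epsilon$, noting that $1-f$ inherits regularity since only the empty Fourier coefficient changes, union bound) is the standard way to fill in the details the paper omits.
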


We also use a junta version of Jones' regularity lemma, which we prove in \Cref{sec:regularity}.

\begin{theorem}[Jones' regularity lemma] \label{thm:jones-junta}
For every $\epsilon,\tau > 0$, $d,m \in \mathbb{N}$, and $p_1,\dots,p_m \in (0,1)$ there exists a function $\mathcal{M}\colon \mathbb{N} \to \mathbb{N}$ such that the following holds for all $n$ and all functions $f\colon \{0,1\}^n \to \{0,1\}$.

For every $J_0 \subseteq [n]$ there exists a set $J \supseteq J_0$ of size at most $\mathcal{M}(|J_0|)$ such that for all $j$,
\[
 \Pr_{x \sim \mu_{p_j}}[f_j|_{J \gets x} \text{ is not $(d,\tau)$-regular}] \leq \epsilon.
\]
\end{theorem}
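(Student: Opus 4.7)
The plan is to prove this multi-function junta version of Jones' regularity lemma (for $f_1,\dots,f_m\colon\{0,1\}^n\to\{0,1\}$) by a standard greedy / potential-function argument in the spirit of \cite{Jones2016}: starting from $J_0$, we grow $J$ one variable at a time, adding any variable that witnesses non-regularity for some $f_j$, and use a potential bounded by $m$ to bound the number of iterations. For each $j \in [m]$ I would define
\[
 \Phi_j(J) := \sum_{S \subseteq [n]} \hat{f}_j(S)^2 \cdot a(|S \setminus J|), \qquad a(k) := \max(1 - k/d,\, 0),
\]
where the Fourier expansion of $f_j$ is taken with respect to $\mu_{p_j}$. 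Since $a$ takes values in $[0,1]$ and is non-increasing, $\Phi_j(J) \le \|f_j\|_2^2 \le 1$ and $\Phi_j$ is non-decreasing in $J$; hence $\sum_j \Phi_j(J) \le m$.

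The key computation is that, by orthonormality of the Fourier characters in $L^2(\mu_{p_j})$,
\[
 \Ex_{x \sim \mu_{p_j}^J}\bigl[\widehat{f_j|_{J \gets x}}(T)^2\bigr] = \sum_{S\,:\, S \cap ([n] \setminus J) = T} \hat{f}_j(S)^2,
\]
which summed over $T \ni i$ with $|T| \le d$ (using that $i \in [n]\setminus J$) gives $\Ex_x[\Inf_i[(f_j|_{J \gets x})^{\le d}]] = \sum_{S\,:\, i \in S,\; |S \setminus J| \le d} \hat{f}_j(S)^2$. The weight function $a$ is engineered so that $a(k-1) - a(k) = 1/d$ precisely for $k \in \{1,\dots,d\}$ and $0$ otherwise; tracing what happens when a single variable $i$ joins $J$ (only sets $S \ni i$ change, and each has $|S \setminus J|$ drop by one) yields the crucial identity
\[
 \Phi_j(J \cup \{i\}) - \Phi_j(J) = \frac{1}{d}\, \Ex_{x \sim \mu_{p_j}^J}\bigl[\Inf_i[(f_j|_{J \gets x})^{\le d}]\bigr].
\]

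Given this identity, the proof concludes greedily: while there exist $j \in [m]$ and $i \in [n] \setminus J$ with $\Pr_{x \sim \mu_{p_j}^J}[\Inf_i[(f_j|_{J \gets x})^{\le d}] > \tau] > \epsilon$, add $i$ to $J$. By Markov (using $\Inf_i[g^{\le d}] \le \|g\|_2^2 \le 1$) any such $i$ satisfies $\Ex_x[\Inf_i[(f_j|_{J \gets x})^{\le d}]] > \epsilon \tau$, so the iteration strictly increases $\Phi_j$ (and hence $\sum_{j'} \Phi_{j'}$) by more than $\epsilon \tau / d$, while the other $\Phi_{j'}$ do not decrease. The bound $\sum_j \Phi_j(J) \le m$ then forces the loop to halt after at most $md / (\epsilon \tau)$ additions, so $|J| \le |J_0| + md / (\epsilon \tau)$ and we may take $\mathcal{M}(k) := k + \lceil md/(\epsilon \tau) \rceil$. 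The main design choice—rather than a genuine obstacle—is choosing the ramp $a(k) = \max(1-k/d,0)$: one needs $a(k-1)-a(k)$ to be bounded (so $\Phi_j$ stays bounded by $1$) yet active on exactly the ``low-degree'' range $k \in \{1,\dots,d\}$ in order to capture the low-degree influence; any such rescaled ramp works and the argument is fully parallel to the decision-tree variant \Cref{thm:jones-tree}.
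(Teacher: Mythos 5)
The potential function and the key identity you derive are both correct: the Efron--Stein/Fourier averaging formula $\Ex_{x}[\widehat{f_j|_{J\gets x}}(T)^2] = \sum_{S : S\cap J^c = T}\hat f_j(S)^2$ is right, the ramp weights $a(k)=\max(1-k/d,0)$ give $a(k-1)-a(k)=1/d$ exactly on $k\in\{1,\dots,d\}$, and the resulting identity $\Phi_j(J\cup\{i\})-\Phi_j(J)=\tfrac{1}{d}\Ex_x[\Inf_i[(f_j|_{J\gets x})^{\le d}]]$ is a clean and correct analogue of \Cref{lem:splitting}. The problem is entirely in the termination logic.

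Your loop stops once, for every $j$ and every $i\in[n]\setminus J$, $\Pr_{x}[\Inf_i[(f_j|_{J\gets x})^{\le d}]>\tau]\le\epsilon$. But the statement of \Cref{thm:jones-junta} requires $\Pr_{x}[f_j|_{J\gets x}\text{ not regular}]=\Pr_x[\exists i:\Inf_i[(f_j|_{J\gets x})^{\le d}]>\tau]\le\epsilon$. You have bounded $\max_i\Pr_x[\cdot]$ when you need $\Pr_x[\max_i\cdot]$, and these are far apart precisely when the offending coordinate depends on $x$. A concrete failure: take $J_0=[k]$ with $k$ large enough that $\max(p,1-p)^k<\epsilon$, choose $n\ge 2^k+k$, and let $f(x,y)=y_{i(x)}$ where $x\mapsto i(x)$ is an injection from $\{0,1\}^{J_0}$ into $[n]\setminus J_0$. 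At $J=J_0$ every subfunction $f|_{J\gets x}$ is a dictator, hence not $(d,\tau)$-regular for any reasonable $\tau<1$, so $\Pr_x[\text{not regular}]=1$; yet for each fixed $i$, $\Pr_x[\Inf_i>\tau]\le\max(p,1-p)^k<\epsilon$, so your loop never fires and returns $J=J_0$. Thus your proof produces a $J$ that does not satisfy the conclusion.

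This is not a technical slip but the heart of the matter. The paper's proof (via \Cref{thm:jones-general-noisy-junta}) avoids the problem by, at each step, adding a possibly \emph{different} variable for each restriction $x\in\{0,1\}^{J_t}$ where regularity fails—i.e.\ it adds up to $2^{|J_t|}$ variables per step—so the resulting $\mathcal{M}$ grows much faster than linearly (roughly a tower). Your proposal adds a single variable per step and would give $\mathcal{M}(k)=k+O(md/\epsilon\tau)$, matching the \emph{depth} bound of the decision-tree version \Cref{thm:jones-tree}. The paper explicitly remarks at the end of \Cref{sec:regularity} that the junta bound is genuinely worse than the tree-depth bound, which is strong evidence that no argument of the form you propose can give a linear $\mathcal{M}$. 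If you want a single-variable-per-step scheme you would need a termination condition involving $\Pr_x[\exists i:\Inf_i>\tau]$, but then the increment of the potential is controlled only by $\min_x \mu^J(x)$ times $\tau$, which decays with $|J|$ and gives no dimension-free bound.
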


The proof consists of several parts:
\begin{itemize}
    \item We prove a counting lemma for predicates without affine relations in \Cref{sec:main-counting}.
    \item We outline the rounding procedure in \Cref{sec:main-rounding}.
    \item We prove \Cref{thm:intro-main} under the assumption that no coordinates are constant or duplicate in \Cref{sec:main-generic}.
    \item We prove \Cref{thm:intro-main} in full generality in \Cref{sec:main-conclusion}.
\end{itemize}

\subsection{Counting lemma} \label{sec:main-counting}

A predicate $P \subseteq \{0,1\}^m$ has no \emph{affine relations} if there do not exist a non-empty subset $S$ and $b \in \{0,1\}$ such that $\bigoplus_{i \in S} w_i = b$ for all $w \in P$. In particular, this implies that no coordinate of $P$ is constant, and no two coordinates are always equal or always non-equal.

\begin{lemma}[Counting lemma for predicates without affine relations] \label{lem:counting}
Let $P$ be a predicate without affine relations, and let $\mu$ be a distribution on $P$ with full support. For every $\epsilon > 0$ there exist $d \in \mathbb{N}$ and $\tau,\gamma > 0$ such that the following holds for all $n$.

Let $\phi_1,\dots,\phi_m\colon \{0,1\}^n \to \{0,1\}$ be functions such that $(\phi_1,\dots,\phi_m)$ is a $(\mu,\gamma)$-approximate generalized polymorphism of $P$ and $\phi_j$ is $(d,\tau)$-regular with respect to $\mu|_j$ for all $j \in [m]$. Define a function $\chi_\epsilon\colon [m] \to \{0,1,*\}$ as follows:
\[
 \chi_\epsilon(j) =
 \begin{cases}
     0 & \text{if } \Ex_{\mu|_j}[\phi_j] \leq \epsilon, \\
     1 & \text{if } \Ex_{\mu|_j}[\phi_j] \geq 1-\epsilon, \\
     * & \text{otherwise}.
 \end{cases}
\]

Let $\alpha\colon [m] \to \{0,1\}$ be any assignment consistent with $\chi_\epsilon$. Then $\alpha \in P$ and
\[
 \Pr_{(y^{(1)},\dots,y^{(m)}) \sim \mu^n}[(\phi_1(y^{(1)}), \dots, \phi_m(y^{(m)})) = \alpha] > \gamma.
\]
\end{lemma}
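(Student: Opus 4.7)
The plan is to prove both assertions by induction on $m$; the conclusion $\alpha\in P$ will follow automatically from the probability bound, since if some $\alpha\notin P$ consistent with $\chi_\epsilon$ were hit with probability exceeding $\gamma$, this would contradict the hypothesis that $(\phi_1,\dots,\phi_m)$ is a $(\mu,\gamma)$-approximate generalized polymorphism. So the core task is to lower-bound, for every $\alpha$ consistent with $\chi_\epsilon$, the probability of hitting $\alpha$.

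First I would reduce to the case where $P$ is \emph{almost full}, meaning $P|_{[m]\setminus\{j\}}=\{0,1\}^{m-1}$ for every $j\in[m]$. If $P$ is not almost full, then for some $j$ the marginal predicate $Q = P|_{[m]\setminus\{j\}}$ is a proper subset of $\{0,1\}^{m-1}$; moreover, $Q$ inherits the no-affine-relations property of $P$, since any affine relation on a subset $S\subseteq[m]\setminus\{j\}$ of coordinates of $Q$ is simultaneously an affine relation of $P$. I can therefore apply the induction hypothesis to $(\phi_i)_{i\neq j}$ together with the marginal distribution $\mu|_{[m]\setminus\{j\}}$, and separately incorporate $\phi_j$ using its $(d,\tau)$-regularity via \Cref{thm:it-aint-over-two-sided}.

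In the almost-full case, I would invoke the dichotomy announced in \Cref{sec:outline-general}: an almost full predicate without affine relations must possess a flexible coordinate. Using this, I would generalize the NAND construction from \Cref{lem:monotone-counting-NAND} by building a joint restriction distribution $\rho\in(\{0,1,*\}^m)^n$ in which each input coordinate is sampled independently as follows: with small probability $q$ place a single star at a flexible position $j$ (filling the remaining entries with some $w\in P$ that tolerates both values at $j$), and otherwise sample $\mu$ directly. The construction is engineered so that (i) each marginal $\rho|_j$ is exactly the product restriction distribution required by \Cref{thm:it-aint-over-two-sided}, and (ii) filling each star according to $\mu|_j$ recovers $\mu^n$. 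Applying It Ain't Over Till It's Over to each $\phi_j$ — two-sided when $\chi_\epsilon(j)=*$, and one-sided when $\chi_\epsilon(j)\in\{0,1\}$ — guarantees that with probability close to $1$ over $\rho$, each $\Ex[\phi_j|_{\rho|_j}]$ is bounded away from the forbidden value $1-\alpha(j)$. Since the starred input positions of the different $\rho|_j$ land in disjoint coordinates by construction, conditioned on $\rho$ the events $\phi_j|_{\rho|_j}(y^{(j)})=\alpha(j)$ factor independently; multiplying the conditional probabilities yields the desired bound $>\gamma$.

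The main obstacle will be constructing the joint restriction $\rho$ when the predicate is almost full without affine relations but flexibility is available only at certain coordinates. When every coordinate is flexible (the regime of \Cref{thm:intro-alphabet}), a single star-in-one-position recipe suffices, mirroring the NAND proof almost verbatim. In the general case one must iterate the dichotomy across inflexible coordinates, mixing samples with stars placed at different flexible positions while simultaneously preserving the correct $\mu$ marginals and maintaining the disjointness of starred coordinates that underlies the independence-after-restriction step. This bookkeeping is the delicate part of the argument, and is where we adapt the techniques of \cite{CFMMS2022} together with ideas from \cite{Mossel2012}.
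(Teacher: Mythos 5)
Your proposal captures one important fragment of the argument (the flexible-coordinate/single-star restriction, which the paper does use), but it diverges from the paper's proof in a way that leaves a genuine gap: you never invoke reverse hypercontractivity, and your independence-after-restriction device cannot substitute for it.

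The crux is how one gets the hitting bound when several coordinates simultaneously have $\chi_\epsilon(j) = *$. Your plan is to place a star at a flexible position and conclude, since the free bits of different $\phi_j|_{\rho|_j}$ are disjoint, that conditioning on $\rho$ makes the events $\phi_j|_{\rho|_j}(y^{(j)}) = \alpha(j)$ independent. This works only for the coordinates $j$ that actually receive stars. For an inflexible $j$, the restriction $\rho|_j$ is fully set, $\phi_j|_{\rho|_j}$ is a constant, and the ``event'' $\phi_j|_{\rho|_j} = \alpha(j)$ is a deterministic function of $\rho|_j$; for two inflexible coordinates $j,k$, the restrictions $\rho|_j$ and $\rho|_k$ are sampled jointly and the corresponding events can be arbitrarily correlated (even mutually exclusive). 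Nothing in the construction forces them to be independent, so multiplying the marginal probabilities is unjustified. The paper's proof of \Cref{lem:counting} never relies on independence across coordinates for this step; instead it uses the hitting lemma (\Cref{lem:hitting}), which is proved via reverse hypercontractivity \cite[Lemma 8.3]{MOS13}. That tool is exactly what handles the correlations your argument drops on the floor, and your proposal never mentions it (the passing reference to ideas from \cite{Mossel2012} doesn't make it explicit, and you don't build a substitute).

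Second, your outer structure is a single induction on $m$, with a proposed reduction ``if $P$ is not almost full, apply the IH to $(\phi_i)_{i\neq j}$ and separately incorporate $\phi_j$.'' It is not explained how $\phi_j$ is incorporated, and the natural obstacle is the same correlation problem: $\phi_j(y^{(j)})$ and the vector $(\phi_i(y^{(i)}))_{i\neq j}$ are drawn from the same coupled sample and need not factor. The paper avoids having to combine two nontrivial bounds by using a double induction, first on $m$ and then on the set of non-$*$ entries of $\chi_\epsilon$: in the inductive step it either union-bounds off a coordinate whose $\phi_{j_0}$ is already $\gamma'/2$-close to constant, or it lowers $\epsilon$ to $\gamma'/2$, which strictly increases the set of $*$-entries and lets it recurse. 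That inner induction (together with the base-case argument establishing that when $\chi_\epsilon \equiv *$ the predicate is forced to be all of $\{0,1\}^m$) is absent from your sketch. You do correctly observe that affine-relation-freeness passes to projections and that $\alpha \in P$ follows from the probability bound, but the bookkeeping you flag as ``delicate'' is in fact the place where a different tool — the hitting lemma — is needed, not merely more care with the restriction construction.
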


The proof of the case $\chi_\epsilon \equiv *$ will require the following lemma, whose proof is adapted from~\cite{Mossel2012}.

\begin{lemma}[Hitting lemma] \label{lem:hitting}
Let $\mu$ be a distribution over $\{0,1\}^m$ with full support. For every $\epsilon > 0$ there exists $\gamma > 0$ such that the following holds.

Let $\phi_1,\dots,\phi_m\colon \{0,1\}^n \to \{0,1\}$ be functions satisfying $\epsilon \leq \Ex_{\mu|_j}[\phi_j] \leq 1-\epsilon$ for all $j \in [m]$. Then for all $\alpha \in \{0,1\}^m$,
\[
 \Pr_{(y^{(1)},\dots,y^{(m)}) \sim \mu^n}[(\phi_1(y^{(1)}), \dots, \phi_m(y^{(m)})) = \alpha] > \gamma.
\]
\end{lemma}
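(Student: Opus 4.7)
My plan is to derive the hitting lemma from reverse hypercontractivity for correlated distributions with full support, in the same spirit as Mossel's proof of the quantitative Arrow theorem~\cite{Mossel2012}. Since $\mu$ is fully supported on $\{0,1\}^m$, the theorem of Mossel, Oleszkiewicz, and Sen~\cite{MOS13} supplies a constant $p = p(\mu) \in (0,1)$ such that, for all nonnegative functions $f_1,\dots,f_m\colon \{0,1\}^n \to \mathbb{R}_{\geq 0}$,
\[
 \Ex_{(y^{(1)},\dots,y^{(m)}) \sim \mu^n}\Bigl[\prod_{j=1}^m f_j(y^{(j)})\Bigr] \geq \prod_{j=1}^m \|f_j\|_p,
\]
where the $p$-norm of $f_j$ is computed with respect to the product marginal $(\mu|_j)^n$. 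The single-block version comes directly from MOS13 and the $n$-fold version follows by tensorization; the multi-function form is either read off the semigroup framework of that paper or proved by induction on $m$, applying the two-function statement to $y^{(1)}$ against $(y^{(2)},\dots,y^{(m)})$ and using that the marginal of $\mu$ on any subset of coordinates still has full support.

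To conclude, I would fix any $\alpha \in \{0,1\}^m$ and apply the inequality to the indicators $f_j = \mathbf{1}_{\{\phi_j = \alpha_j\}}$. The hypothesis $\epsilon \leq \Ex_{\mu|_j}[\phi_j] \leq 1-\epsilon$ gives $\Pr_{(\mu|_j)^n}[\phi_j = \alpha_j] \geq \epsilon$ for either value of $\alpha_j$, so $\|f_j\|_p = \Pr[\phi_j = \alpha_j]^{1/p} \geq \epsilon^{1/p}$. Since the left-hand side of the inequality is exactly $\Pr_{\vec y \sim \mu^n}[\forall j : \phi_j(y^{(j)}) = \alpha_j]$, we obtain
\[
 \Pr_{(y^{(1)},\dots,y^{(m)}) \sim \mu^n}\bigl[(\phi_1(y^{(1)}),\dots,\phi_m(y^{(m)})) = \alpha\bigr] \geq \epsilon^{m/p},
\]
and any $\gamma$ strictly smaller than $\epsilon^{m/p}$ suffices.

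The main obstacle is identifying and invoking the right $m$-function reverse hypercontractive estimate; the full-support hypothesis on $\mu$ is exactly what forces $p(\mu)$ to remain bounded away from $1$, preventing the bound from collapsing. Once this tool is in place, the argument is a one-line computation on indicators, with no dependence on $n$ beyond what is built into the product structure of $\mu^n$.
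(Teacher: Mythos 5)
Your argument is correct, and it rests on the same underlying engine as the paper's proof, namely reverse hypercontractivity for correlated product spaces from \cite{MOS13}. The implementation differs, though. The paper inducts on $m$ and at each step invokes only the qualitative two-set version of the result (\cite[Lemma 8.3]{MOS13}): if $A$ and $B$ each have measure at least $\eta$ under the marginals of a fully supported $\nu^n$, then $\nu^n(A\times B)\geq \zeta(\eta)$. It applies this with $X=\{0,1\}$ and $Y=\{0,1\}^{m-1}$, using the inductive hypothesis to lower-bound the measure of $B$. You instead invoke the $m$-function norm form $\Ex[\prod_j f_j]\geq\prod_j\|f_j\|_p$ directly, plug in the indicators $\mathbf{1}_{\{\phi_j=\alpha_j\}}$, and read off $\gamma=\epsilon^{m/p}$. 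This buys you an explicit and cleaner bound, at the cost of needing the multi-function inequality rather than the weaker two-set statement the paper quotes. That multi-function inequality is genuine, but your aside that it ``follows by induction applying the two-function statement to $y^{(1)}$ against $(y^{(2)},\dots,y^{(m)})$'' is a little compressed: after one application you face $\|\prod_{j\geq 2}f_j\|_{p_2}=(\Ex[\prod_{j\geq 2}f_j^{p_2}])^{1/p_2}$, so the induction must be applied to the raised functions $f_j^{p_2}$ with a further norm-exponent shift, which is exactly where $p$ degrades with $m$. This works, but it is worth noting since the paper avoids the issue entirely by working with sets and measures, for which the degradation is absorbed into the unspecified $\zeta(\eta)$. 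Either route proves the lemma; yours is tighter and arguably more transparent once the $m$-function reverse hypercontractivity is in hand.
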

\begin{proof}
The proof is by induction on $m$. If $m = 1$ then we can take $\gamma = \epsilon/2$, so suppose $m \geq 2$.

Applying the inductive hypothesis with $\mu := \mu|_{\{2,\dots,m\}}$ and $\epsilon := \epsilon$ gives $\gamma_{m-1} > 0$ such that for all $\beta \in \{0,1\}^{m-1}$,
\[
 \Pr_{(y^{(2)},\dots,y^{(m)}) \sim \mu|_{\{2,\dots,m\}}^n}[(\phi_2(y^{(2)}), \dots, \phi_m(y^{(m)})) = \beta] > \gamma_{m-1}.
\]

We now appeal to \cite[Lemma 8.3]{MOS13}, whose statement is as follows. Suppose that $X,Y$ are finite sets and that $\nu$ is a distribution over $X \times Y$ with full support. For every $\eta > 0$ there exists $\zeta > 0$ such that the following holds. If $A \subseteq X^n$ and $B \subseteq Y^n$ have measure at least $\eta$ (with respect to the corresponding marginals of $\nu^n$) then $\nu^n(A \times B) \ge \zeta$.

Given $\alpha \in \{0,1\}^m$, we apply the lemma with $X = \{0,1\}$, $Y = \{0,1\}^{m-1}$, $\nu = \mu$ and $\eta := \min(\epsilon,\gamma_{m-1})$ to obtain $\zeta > 0$. Taking $A = \{y^{(1)} : \phi_1(y^{(1)}) = \alpha_1\}$, $B = \{(y^{(2)},\dots,y^{(m)}) : (\phi_2(y^{(2)}),\dots,\phi_m(y^{(m)})) = (\alpha_2,\dots,\alpha_m)\}$, the lemma implies that
\[
 \Pr_{(y^{(1)},\dots,y^{(m)}) \sim \mu^n}[(\phi_1(y^{(1)}), \dots, \phi_m(y^{(m)})) = \alpha] > \zeta,
\]
completing the proof (taking $\gamma := \zeta/2$).
\end{proof}

We can now prove the counting lemma.

\begin{proof}[Proof of \Cref{lem:counting}]
The proof is by double induction: first on $m$, and then on the set of non-$*$ inputs in $\chi_\epsilon$. We also assume, without loss of generality, that $\epsilon < 1/2$.

If $m = 1$ then necessarily $P = \{0, 1\}$, since otherwise $P$ has a constant coordinate. Therefore the lemma trivially holds with $\gamma = \epsilon$. From now on, we assume that $m \geq 2$, and induct on the number of non-$*$ inputs in $\chi_\epsilon$.

\paragraph{Base case}

Suppose that $\chi_\epsilon(j) = *$ for all $j \in [m]$. We will find $d' \in \mathbb{N}$ and $\tau',\gamma' > 0$ such that whenever $d \geq d'$, $\tau \leq \tau'$ and $\gamma \leq \gamma'$, the assumptions imply that $P = \{0,1\}^m$. Applying \Cref{lem:hitting} with $\mu := \mu$ and $\epsilon := \epsilon$ gives us $\gamma'' > 0$ such that
\[
 \Pr_{(y^{(1)},\dots,y^{(m))} \sim \mu^n}[(\phi_1(y^{(1)}), \dots, \phi_m(y^{(m)})) = \alpha] > \gamma''
\]
for all $\alpha \in \{0,1\}^m$.
Taking $\gamma = \min(\gamma', \gamma'')$ will conclude the proof.

\smallskip

Suppose therefore that $P \neq \{0,1\}^m$. We first show that for appropriate $d',\tau',\gamma'$ there exists an index $j_0 \in [m]$ and two inputs $u,v \in P$ such that $u^{\oplus j_0} \in P$ and $v^{\oplus j_0} \notin P$, where the inputs $u^{\oplus j_0},v^{\oplus j_0}$ are obtained from the inputs $u,v$ by flipping the $j_0$'th coordinate. \highlight{This is where we use the assumption that $P$ has no affine relations.} We then show that the probability that $(\phi_1,\dots,\phi_m)$ evaluates to $v^{\oplus j_0}$ is bounded from below, and obtain a contradiction by setting $\gamma'$ small enough.

For each $j \in [m]$, we apply the inductive hypothesis to the predicate $P|_{[m] \setminus \{j\}}$, the distribution $\mu|_{[m] \setminus \{j\}}$, and $\epsilon := \epsilon$ to obtain $d_j,\tau_j,\gamma_j$ such that the following holds. If for all $k \neq j$, the function $\phi_k$ is $(d_j,\tau_j)$-regular with respect to $\mu|_k$ and satisfies $\epsilon \leq \Ex_{\mu|_k}[\phi_k] \leq 1-\epsilon$, then $\alpha \in P|_{[m] \setminus \{j\}}$ for every $\alpha \in \{0,1\}^{[m] \setminus \{j\}}$, implying that $P|_{[m] \setminus \{j\}} = \{0,1\}^{[m] \setminus \{j\}}$.

Choose $d''' = \max(d_1,\dots,d_m)$, $\tau''' = \min(\tau_1,\dots,\tau_m)$, $\gamma''' = \min(\gamma_1,\dots,\gamma_m)$. If $d \geq d'''$, $\tau''' \leq \tau$, $\gamma''' \leq \gamma$ then the assumptions of the lemma imply that $P|_{[m] \setminus \{j\}} = \{0,1\}^{[m] \setminus \{j\}}$ for all $j$. This implies that for every $w \in \{0,1\}^m$ and every $j \in [m]$, either $w \in P$ or $w^{\oplus j} \in P$ (or both).

Let $J$ be the set of variables that $P$ depends on: there is $w \in P$ such that $w^{\oplus j} \notin P$. By assumption, $\emptyset \neq P \neq \{0,1\}^m$, and so $J$ is non-empty. If for every $w \in P$ and every $j \in J$ we have $w^{\oplus j} \notin P$ then $P|_J$ consists of all inputs with a given parity, contradicting the assumption that $P$ has no affine relations. Therefore there must exist $j_0 \in J$ and $u \in P$ such that $u^{\oplus j_0} \in P$. Since $j_0 \in J$, there also exists $v \in P$ such that $v^{\oplus j_0} \notin P$.

\smallskip

The remainder of the argument is reminiscent of the corresponding argument in~\cite{CFMMS2022}. As in the proof of the counting lemma for NAND (\Cref{lem:monotone-counting-NAND}), we will sample $\mu$ using a two-step process. Let $u_\ast$ be obtained from $u$ by setting the $j_0$'th coordinate to $*$. First, we sample a restriction $\rho \in (\{0,1\}^m \cup \{u_\ast\})^n$ by sampling each coordinate independently according to the following law, where $q = \min(\mu(u), \mu(u^{\oplus j_0}))$:
\begin{itemize}
    \item $\rho_i = u$ with probability $\mu(u) - \Pr[\mu|_{j_0} = u_{j_0}] q$.
    \item $\rho_i = u^{\oplus j_0}$ with probability $\mu(u^{\oplus j_0}) - \Pr[\mu|_{j_0} = u^{\oplus j_0}_{j_0}] q$.
    \item $\rho_i = w$ with probability $\mu(w)$ for any $w \neq u,u^{\oplus j_0}$.
    \item $\rho_i = u_*$ with probability $q$.
\end{itemize}

Given $\rho$, we can obtain a sample of $\mu$ by sampling the $j_0$'th coordinate according to $\mu|_{j_0}$ if $\rho_i = u_*$. As in \Cref{lem:monotone-counting-NAND}, the distribution of $\rho|_j$ given that $\rho|_j \neq *$ is the same as $\mu|_j$.

Apply \Cref{thm:it-aint-over-two-sided} (It Ain't Over Till It's Over) to $\phi_{j_0}$ with $p := \Pr[\mu|_{j_0} = 1]$, $q := q$, $\epsilon := \epsilon$ to obtain $d'''',\tau'''',\delta$, and take $d'' = \max(d''',d'''')$ and $\tau'' = \min(\tau''',\tau'''')$. The assumptions of the lemma imply that
\[
 \Pr_\rho\bigl[\Pr_{\mu|_{j_0}}[\phi_{j_0}|_{\rho|_{j_0}} = v^{\oplus j_0}_{j_0}] \geq \delta\bigr] \geq 1-\epsilon.
\]

Since $P|_{[m] \setminus \{j_0\}} = \{0,1\}^{[m] \setminus \{j_0\}}$, we can apply \Cref{lem:hitting} to $\mu|_{[m] \setminus \{j_0\}}$ and $\epsilon := \epsilon$ to obtain $\gamma'''' > 0$ such that
\[
 \Pr_{y \sim (\mu|_{[m] \setminus \{j_0\}})^n}[\phi_j(y^{(j)}) = v_j \text{ for all } j \neq j_0] > \gamma''''.
\]
It follows that
\[
 \Pr_{(y^{(1)},\dots,y^{(m)}) \sim \mu^n}[(\phi_1(y^{(1)}),\dots,\phi_m(y^{(m)})) = v^{\oplus j_0}] > (1-\epsilon) \delta \gamma''''.
\]
Setting $\gamma' = \min(\gamma''',(1-\epsilon)\delta\gamma'''')$ contradicts the assumption that $(\phi_1,\dots,\phi_m)$ is a $(\mu,\gamma)$-approximate generalized polymorphism of $P$ for some $\gamma \leq \gamma'$, and we conclude that necessarily $P = \{0,1\}^m$, as claimed above. This concludes the proof of the base case.

\paragraph{Inductive case}
Suppose that $\chi_\epsilon(j_0) \neq *$ for some $j_0 \in [m]$.

Let us try to reduce the statement of the lemma to the same statement for $P|_{[m] \setminus \{j_0\}}$. We apply the inductive hypothesis to the predicate $P|_{[m] \setminus \{j_0\}}$, the distribution $\mu|_{[m] \setminus \{j_0\}}$, and $\epsilon := \epsilon$ to obtain $d',\tau',\gamma'$ such that if $d \geq d'$, $\tau \leq \tau'$, $\gamma \leq \gamma'$ then the assumptions imply that for every $\alpha$ consistent with $\chi_\epsilon$, $\alpha|_{[m] \setminus \{j_0\}} \in P|_{[m] \setminus \{j_0\}}$, and moreover
\[
 \Pr_{y \sim (\mu|_{[m] \setminus \{j_0\}})^n}[\phi_j(y^{(j)}) = \alpha_j \text{ for all } j \neq j_0] > \gamma'.
\]

If $\Pr_{\mu|_{j_0}}[\phi_{j_0} = \alpha_{j_0} \oplus 1] \leq \gamma'/2$ then
\[
 \Pr_{(y^{(1)},\dots,y^{(m)}) \sim \mu^n}[(\phi_1(y^{(1)}),\dots,\phi_m(y^{(m)})) = \alpha] > \gamma'/2,
\]
implying that the lemma holds if $\gamma \leq \gamma'/2$; we automatically get that $\alpha \in P$ since $(\phi_1,\dots,\phi_m)$ is a $(\mu,\gamma'/2)$-approximate generalized polymorphism of $P$.

If $\Pr_{\mu|_{j_0}}[\phi_{j_0} = \alpha_{j_0} \oplus 1] > \gamma'/2$ (implying that $\gamma'/2 < \epsilon$) then we first observe that the set of $*$-inputs of $\chi_{\gamma'/2}$ strictly contains the set of $*$-inputs of $\chi_{\epsilon}$. Indeed, if $\chi_\epsilon = *$ then $\chi_{\gamma'/2} = *$ since $\gamma'/2 < \epsilon$, and moreover $\chi_\epsilon(j_0) \neq *$ whereas $\chi_{\gamma'/2}(j_0) = *$.

This allows us to apply the inductive hypothesis to $P := P$, $\mu := \mu$, and $\epsilon := \gamma'/2$. We obtain $d'',\tau'',\gamma''$ such that if $d \geq d''$, $\tau \leq \tau''$, $\gamma \leq \gamma''$ then the assumptions imply that for every $\alpha$ consistent with $\chi_{\gamma'/2}$ (in particular, every $\alpha$ consistent with $\chi_\alpha$) we have $\alpha \in P$ and
\[
 \Pr_{(y^{(1)},\dots,y^{(m)}) \sim \mu^n}[(\phi_1(y^{(1)}),\dots,\phi_m(y^{(m)})) = \alpha] > \gamma''.
\]

Choosing $d = \max(d',d'')$, $\tau = \min(\tau',\tau'')$, $\gamma = \min(\gamma'/2,\gamma'')$ completes the proof.
\end{proof}

\subsection{Rounding} \label{sec:main-rounding}

The proof of \Cref{thm:intro-main} (for predicates without affine relations) follows the main plan of the proof of \Cref{thm:intro-monotone}: we apply Jones' regularity lemma on $f_1,\dots,f_m$ to construct a junta $J$ such that on average, $f_j|_{J \gets x^{(j)}}$ is regular. \Cref{lem:counting} suggests a ``rounding'' procedure to construct the generalized polymorphism $(g_1,\dots,g_m)$: we define $g_j|_{J \gets x^{(j)}} \equiv b$ if $\Pr[f_j|_{J \gets x^{(j)}} = b] \geq 1 - \epsilon$ for some $b \in \{0,1\}$, and $g_j|_{J \gets x^{(j)}} = f_j|_{J \gets x^{(j)}}$ otherwise. The lemma implies (for an appropriate choice of parameters) that $(g_1|_{J \gets x^{(1)}}, \dots, g_m|_{J \gets x^{(m)}})$ is a generalized polymorphism whenever all of the involved subfunctions are regular, but gives no information otherwise.

In the monotone setting, we handle this difficulty by setting $g_j|_{J \gets x^{(j)}} \equiv 0$ whenever $f_j|_{J \gets x^{(j)}}$ is not regular. In the general case, there is no such safe direction. Instead, we use a two-step sampling procedure, along the lines of the proof of \Cref{lem:monotone-counting-NAND} (the counting lemma for NAND), to define $g_1,\dots,g_m$, and use \Cref{lem:counting} to show that $g_j$ is close to $f_j$.

The two-step sampling procedures in \Cref{lem:monotone-counting-NAND} and \Cref{lem:counting} rely on the existence of inputs $w_{(j)} \in P$ such that $w_{(j)}^{\oplus j} \in P$. If such inputs exist for all $j$ then the proof of \Cref{thm:intro-main} can be simplified considerably; we take this route when we prove \Cref{thm:intro-alphabet}. However, in general this is not the case. As an example, if $P = \{(1,0,0),(0,1,0),(0,0,1)\}$ then no such inputs exist for any~$j$. Our argument will need to distinguish between these two types of coordinates.

\begin{definition}[Flexible coordinates]
Let $P \subseteq \{0,1\}^m$ be a non-empty predicate. A coordinate $j \in [m]$ is \emph{flexible} if there exists $w_{(j)} \in P$ such that $w_{(j)}^{\oplus j} \in P$, and \emph{inflexible} otherwise.

If $j$ is a flexible coordinate, let $w_{(j,0)} \in P$ be some fixed input such that $(w_{(j,0)})_j = 0$ and $w_{(j,1)} := w_{(j,0)}^{\oplus j} \in P$. Also, let $w_{(j,*)}$ be the restriction obtained by changing the $j$'th coordinate to $*$.
\end{definition}

We can now describe the distribution of the restriction $\rho$ in the first step of the two-step sampling process. Each of its coordinates will be sampled according to the following distribution $\nu$ on $P \cup \{ w_{(j,*)} : j \text{ flexible} \}$, for a small enough $q > 0$:
\begin{itemize}
\item For $w \in P$, sample $w$ with probability
\[
 \mu(w) - \sum_{j\colon w = w_{(j,0)}} (1-p_j)q - \sum_{j\colon w = w_{(j,1)}} p_j q,
\]
where the sum is over all flexible coordinates $j$, and $p_j = \Pr[\mu|_j = 1]$.
\item For every flexible $j$, sample $w_{(j,*)}$ with probability $q$.
\end{itemize}
We can sample $\mu$ by first sampling $\nu$, and in case $w_{(j,*)}$ was sampled, sampling the $j$'th coordinate according to $\mu|_j$. Also, the distribution of $\nu|_j$ conditioned on $\nu|_j \neq *$ coincides with $\mu|_j$.

We start by explaining how to use the two-step sampling process to construct a generalized polymorphism $(g_1,\dots,g_m)$. In the lemma below, $J$ is the set which will be constructed using Jones' regularity lemma. \highlight{This rounding procedure prevents us from ensuring that $g_i = g_j$ even if $f_i = f_j$ and $\mu|_i = \mu|_j$.} In this lemma and below, we use $\min(\mu) := \min_{w \in P} \mu(w)$.

\begin{lemma}[Rounding lemma] \label{lem:rounding-poly}
Let $P \subseteq \{0,1\}^m$ be a non-empty predicate, and let $\mu$ be a distribution on $P$ having full support.
For every $\epsilon,\zeta > 0$ there exists $\delta > 0$ such that the following holds.

Let $f_1,\dots,f_m\colon \{0,1\}^n \to \{0,1\}$, and let $J \subseteq [n]$. 
For a restriction $\rho\colon J^c \to \{0,1,*\}^m$, define functions $g_1^{\rho,\epsilon},\dots,g_m^{\rho,\epsilon}\colon \{0,1\}^n \to \{0,1\}$ as follows: for each $j \in [m]$ and each $x^{(j)} \in \{0,1\}^J$,
\[
 g^{\rho,\epsilon}_j|_{J \gets x^{(j)}} =
 \begin{cases}
     0 & \text{if } \Ex|_{\mu|_j}[f_j|_{J \gets x^{(j)},\rho|_j}] \leq \epsilon, \\
     1 & \text{if } \Ex|_{\mu|_j}[f_j|_{J \gets x^{(j)},\rho|_j}] \geq 1-\epsilon, \\
     f_j|_{J \gets x^{(j)}} & \text{otherwise}.
 \end{cases}
\]
For fixed $x^{(1)},\dots,x^{(m)}$, this corresponds to a coloring $\chi^{\rho,\epsilon}\colon [m] \to \{0,1,*\}$ in a natural way.

If $(f_1,\dots,f_m)$ is a $(\mu,\min(\mu)^{|J|} \delta)$-approximate generalized polymorphism of $P$ then
\[
 \Pr_{\rho \sim \nu^{J^c}}[(g_1^{\rho,\epsilon},\dots,g_m^{\rho,\epsilon}) \text{ is a generalized polymorphism of $P$}] \geq 1-\zeta.
\]
\end{lemma}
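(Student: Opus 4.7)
The plan is to rewrite $\mu^n$ via the two-step sampling already described in the excerpt: first sample $x \sim \mu^J$ and $\rho \sim \nu^{J^c}$ independently, then fill in the $\ast$-slots of $\rho$ via the corresponding marginals $\mu|_j$. I will show that any $\rho$ for which $(g_1^{\rho,\epsilon},\dots,g_m^{\rho,\epsilon})$ fails to be a generalized polymorphism contributes at least $\min(\mu)^{|J|}\epsilon^m$ to the overall violation probability of $(f_1,\dots,f_m)$, so that taking $\delta := \zeta \epsilon^m / 2$ (and assuming $\epsilon \leq 1/2$ without loss of generality) yields the lemma by contradiction.

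\textbf{Step 1: reformulating failure.} Fix $\rho$. I claim that $(g_1^{\rho,\epsilon},\dots,g_m^{\rho,\epsilon})$ fails to be a polymorphism of $P$ if and only if there exist $x = (x^{(1)},\dots,x^{(m)}) \in P^J$ and $\alpha \notin P$ consistent with the coloring $\chi^{\rho,\epsilon}_{x}$. The ``only if'' direction is immediate, since any failing witness $z = (x, y)$ produces such an $\alpha$ via $\alpha_j = g_j^{\rho,\epsilon}(z^{(j)})$. For the converse, write $S_j = \{i \in J^c : \rho_i = w_{(j,\ast)}\}$ and note that within the $\rho$-support the coordinates $y^{(j)}|_{S_j}$ may be chosen freely while preserving validity (both $w_{(j,0)}, w_{(j,1)} \in P$); on each coordinate $j$ with $\chi(j) = \ast$, the definition of $\ast$ ensures that some assignment $y^{(j)}|_{S_j}$ realises $f_j(x^{(j)}, y^{(j)}) = \alpha_j$.

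\textbf{Step 2: counting $f$-violations conditional on $\rho$.} Fix such a failing $\rho$ together with a witness pair $(x, \alpha)$ supplied by Step 1. Conditional on $(\rho, x)$, the two-step process draws $y^{(j)}|_{S_j} \sim \mu|_j^{S_j}$ while $y^{(j)}|_{J^c \setminus S_j}$ is fixed by $\rho$. Because $\nu$ is designed so that the star-sets $S_1, \dots, S_m$ are pairwise disjoint, the variables $f_j(x^{(j)}, y^{(j)})$ for $j \in [m]$ are mutually independent given $(\rho, x)$. For each $j$,
\[
 \Pr[f_j(x^{(j)}, y^{(j)}) = \alpha_j \mid \rho, x] \geq \epsilon,
\]
because this probability equals $1 - \Ex_{\mu|_j}[f_j|_{J \gets x^{(j)}, \rho|_j}] \geq 1 - \epsilon$ when $\chi(j) = 0 = \alpha_j$, equals $\Ex_{\mu|_j}[f_j|_{J \gets x^{(j)}, \rho|_j}] \geq 1 - \epsilon$ when $\chi(j) = 1 = \alpha_j$, and lies strictly between $\epsilon$ and $1 - \epsilon$ when $\chi(j) = \ast$. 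Independence therefore yields a joint probability $\geq \epsilon^m$, and multiplying by $\mu^J(x) \geq \min(\mu)^{|J|}$ produces the claimed $\min(\mu)^{|J|}\epsilon^m$ contribution to the violation probability of $(f_1, \dots, f_m)$ at this $\rho$.

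\textbf{Conclusion and main obstacle.} Averaging over $\rho$: if $\Pr_{\rho \sim \nu^{J^c}}[g^\rho \text{ is not a polymorphism}] > \zeta$, then $\Pr_{z \sim \mu^n}[(f_j(z^{(j)}))_j \notin P] > \zeta \min(\mu)^{|J|}\epsilon^m$, contradicting the hypothesis once $\delta \leq \zeta \epsilon^m / 2$. The only delicate step is the ``if'' direction of the reformulation in Step 1, which requires the flexibility structure underlying $\nu$ (without flexible coordinates we could not freely realise the values $\alpha_j$ on $\ast$-coordinates); this is also why, as the highlighted remark notes, the rounding cannot be made symmetric even when $f_i = f_j$ and $\mu|_i = \mu|_j$, since the per-coordinate restrictions $\rho|_i$ and $\rho|_j$ will typically differ.
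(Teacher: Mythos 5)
Your proof is correct and essentially matches the paper's: both arguments extract from a failing $(g_1^{\rho,\epsilon},\dots,g_m^{\rho,\epsilon})$ a pair $(x,\alpha)$ with $x\in P^J$, $\alpha\notin P$ extending $\chi^{\rho,\epsilon}$, use the disjointness of the star-sets $S_1,\dots,S_m$ to obtain a lower bound $\geq\epsilon^m$ on the conditional probability that $(f_1,\dots,f_m)$ hits $\alpha$, and combine this with $\mu^J(x)\geq\min(\mu)^{|J|}$ and a Markov-type averaging over $\rho$ to get a contradiction with the same choice $\delta=\zeta\epsilon^m/2$. One small remark: the ``if'' direction of your Step~1 reformulation is never used (only ``$g^{\rho,\epsilon}$ fails $\Rightarrow$ witness pair exists'' is needed), so the flexibility of the coordinates is not where it is crucial in this lemma; what the argument really requires is that the $S_j$ are disjoint (giving independence) and that $*$-coordinates have conditional hitting probability strictly between $\epsilon$ and $1-\epsilon$, which is automatic from the definition of $\chi^{\rho,\epsilon}$ and holds vacuously for inflexible $j$ since then $\chi^{\rho,\epsilon}(j)\neq *$.
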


If $j$ is inflexible then $\rho|_j \in \{0,1\}^{J^c}$, and so $f_j|_{J \gets x^{(j)}, \rho|_j}$ is a constant. In this case $g_j^{\rho,\epsilon}|_{J \gets x^{(j)}}$ always gets set to a constant. As we show later, \Cref{lem:counting} explains why this is a sound choice.

\begin{proof}
Suppose that $(f_1,\dots,f_m)$ is a $(\mu,\min(\mu)^{|J|} \delta)$-approximate generalized polymorphism of $P$, where we set $\delta$ later on. This means that
\[
 \Ex_{\rho \sim \nu^{J^c}} \Pr_{\substack{(x^{(1)},\dots,x^{(m)}) \sim \mu^J \\ (y^{(1)},\dots,y^{(m)}) \sim \mu^{J^c}|\rho}}[(f_1|_{J \gets x^{(1)}}(y^{(1)}), \dots, f_m|_{J \gets x^{(m)}}(y^{(m)})) \notin P] \leq \min(\mu)^{|J|} \delta,
\]
and so with probability at least $1-\zeta$ over the choice of $\rho$,
\[
 \Pr_{\substack{(x^{(1)},\dots,x^{(m)}) \sim \mu^J \\ (y^{(1)},\dots,y^{(m)}) \sim \mu^{J^c}|\rho}}[(f_1|_{J \gets x^{(1)}}(y^{(1)}), \dots, f_m|_{J \gets x^{(m)}}(y^{(m)})) \notin P] \leq \min(\mu)^{|J|}\delta/\zeta.
\]
Therefore for every $(x^{(1)},\dots,x^{(m)}) \in P^J$ we have
\[
 \Pr_{(y^{(1)},\dots,y^{(m)}) \sim \mu^{J^c}|\rho}[(f_1|_{J \gets x^{(1)}}(y^{(1)}), \dots, f_m|_{J \gets x^{(m)}}(y^{(m)})) \notin P] \leq \delta/\zeta.
\]
We will show that if $\delta$ is small enough, then this implies that $(g_1^\rho,\dots,g_m^\rho)$ is a generalized polymorphism.

If $(g_1^{\rho,\epsilon},\dots,g_m^{\rho,\epsilon})$ is not a generalized polymorphism of $P$ then there exists $(x^{(1)},\dots,x^{(m)}) \in P^J$ such that $(g_1^{\rho,\epsilon}|_{J \gets x^{(1)}},\dots,g_m^{\rho,\epsilon}|_{J \gets x^{(m)}})$ is not a generalized polymorphism of $P$. This means that the corresponding coloring $\chi^{\rho,\epsilon}$ can be extended to some $\alpha \notin P$. Observe that
\[
 \Pr_{(y^{(1)},\dots,y^{(m)}) \sim \mu^{J^c}|\rho}[(f_1|_{J \gets x^{(1)}}(y^{(1)}), \dots, f_m|_{J \gets x^{(m)}}(y^{(m)})) = \alpha] \geq \epsilon^m.
\]
Choosing $\delta = \epsilon^m\zeta/2$ completes the proof.
\end{proof}

The next step is to show that on average (over $\rho$), the function $g_j^{\rho,\eta}$ is close to $f_j$, for an appropriate choice of $\eta$. For flexible coordinates, this follows immediately from \Cref{thm:it-aint-over-two-sided}. For inflexible coordinates, we will appeal to \Cref{lem:counting}, which implies that for such coordinates, $f_j|_{J \gets x^{(j)}}$ is close to constant. Since we apply \Cref{lem:counting}, this argument will only work for certain subfunctions.

\begin{definition}[Good subfunctions] \label{def:good}
Let $P \subseteq \{0,1\}^m$ be a predicate, and let $\mu$ be a distribution over $P$ having full support. Let $d \in \mathbb{N}$ and $\tau > 0$. Let $f_1,\dots,f_m\colon \{0,1\}^n \to \{0,1\}$, and let $J \subseteq [n]$.

A subfunction $f_j|_{J \gets x}$ is \emph{$(\mu,d,\tau)$-good} if there exists $(x^{(1)},\dots,x^{(m)}) \in P^J$ with $x^{(j)} = x$ such that $f_k|_{J \gets x^{(k)}}$ is $(d,\tau)$-regular with respect to $\mu|_k$ for all $k \in [m]$.
\end{definition}

\begin{lemma}[Most subfunctions are good]
\label{lem:good-most}
Let $P \subseteq \{0,1\}^m$ be a predicate, and let $\mu$ be a distribution over $P$ having full support. Let $d \in \mathbb{N}$ and $\tau > 0$. Let $f_1,\dots,f_m\colon \{0,1\}^n \to \{0,1\}$, and let $J \subseteq [n]$.

Suppose that for all $j$,
\[
 \Pr_{x \sim \mu|_j^J}[f_j|_{J \gets x} \text{ is $(d,\tau)$-regular wrt $\mu|_j$}] \geq 1-\epsilon.
\]
Then for all $j$,
\[
 \Pr_{x \sim \mu|_j^J}[f_j|_{J \gets x} \text{ is $(\mu,d,\tau)$-good}] \geq 1-m\epsilon.
\]
\end{lemma}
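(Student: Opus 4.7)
The plan is a straightforward union bound using the coupling provided by $\mu^J$. Fix a coordinate $j \in [m]$ and sample a full tuple $(x^{(1)},\dots,x^{(m)}) \sim \mu^J$. The crucial observation is that the marginal distribution of $x^{(k)}$ under $\mu^J$ is exactly $\mu|_k^J$ for every $k \in [m]$, so the hypothesis can be rephrased as saying that
\[
 \Pr_{(x^{(1)},\dots,x^{(m)}) \sim \mu^J}[f_k|_{J \gets x^{(k)}} \text{ is $(d,\tau)$-regular wrt } \mu|_k] \geq 1-\epsilon
\]
for each $k \in [m]$.

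Applying the union bound over the $m$ coordinates, the event $E$ that $f_k|_{J \gets x^{(k)}}$ is $(d,\tau)$-regular simultaneously for all $k \in [m]$ has probability at least $1-m\epsilon$ under $\mu^J$. Now, whenever $E$ occurs, the tuple $(x^{(1)},\dots,x^{(m)}) \in P^J$ itself serves as the witness in \Cref{def:good} for the subfunction $f_j|_{J \gets x^{(j)}}$, which is therefore $(\mu,d,\tau)$-good.

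Finally, we push the bound through to the $j$'th marginal: since $f_j|_{J \gets x^{(j)}}$ being good depends only on $x^{(j)}$, and the marginal of $x^{(j)}$ under $\mu^J$ is $\mu|_j^J$, we conclude
\[
 \Pr_{x \sim \mu|_j^J}[f_j|_{J \gets x} \text{ is $(\mu,d,\tau)$-good}] \geq \Pr_{\mu^J}[E] \geq 1-m\epsilon,
\]
as desired. There is no real obstacle here; the only subtlety is recognizing that the joint distribution $\mu^J$ provides a coupling of the $m$ individual regularity events whose marginals all coincide with the ones given by hypothesis, so a single union bound suffices.
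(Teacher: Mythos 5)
Your proof is correct and is essentially identical to the paper's: both sample the full tuple $(x^{(1)},\dots,x^{(m)}) \sim \mu^J$, apply a union bound to show that all $m$ restricted functions are simultaneously regular with probability at least $1-m\epsilon$, and observe that this tuple then witnesses goodness of $f_j|_{J \gets x^{(j)}}$.
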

\begin{proof}
Fix $j \in [m]$. The assumption implies that
\[
 \Pr_{(x^{(1)},\dots,x^{(m)}) \sim \mu^J}[f_k|_{J \gets x^{(k)}} \text{ is $(d,\tau)$-regular wrt $\mu|_k$ for all $k$}] \geq 1-m\epsilon.
\]
If the event happens then $f_j|_{J \gets x^{(j)}}$ is $(\mu,d,\tau)$-good. The lemma immediately follows.
\end{proof}

We can now show that on average, $g^{\rho,\eta}_j$ is close to $f_j$, for an appropriate choice of $\eta$. Recall that $\min(\mu) = \min_{w \in P} \mu(w)$.

\begin{lemma}[Soundness of rounding] \label{lem:rounding-soundness}
Let $P \subseteq \{0,1\}^m$ be a non-empty predicate without affine relations and let $\mu$ be a distribution on $P$ having full support. For every $\epsilon > 0$ there exist $d \in \mathbb{N}$ and $\delta, \eta, \tau > 0$ such that the following holds.

Let $f_1,\dots,f_m\colon \{0,1\}^m \to \{0,1\}$, and let $J \subseteq [n]$. Suppose that for all $j \in [m]$,
\[
 \Pr_{x^{(j)} \sim \mu|_j^J}[f_j|_{J \gets x^{(j)}} \text{ is $(d,\tau)$-regular}] \geq 1 - \epsilon.
\]
If $(f_1,\dots,f_m)$ is a $(\mu,\min(\mu)^{|J|} \delta)$-approximate generalized polymorphism of $P$ then for all $j \in [m]$,
\[
 \Ex_{\rho \sim \nu^J} \Pr_{\mu|_j}[g_j^{\rho,\eta} \neq f_j] = O(\epsilon).
\]
\end{lemma}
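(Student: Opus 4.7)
My plan is to bound $\Ex_{\rho \sim \nu^J}\Pr_{\mu|_j}[g_j^{\rho,\eta} \neq f_j]$ by conditioning on $x^{(j)}$ and treating flexible and inflexible coordinates $j$ separately. First I would set the parameters: apply \Cref{lem:counting} with input $\epsilon$ to obtain $d_1,\tau_1,\gamma_1$, and apply \Cref{thm:it-aint-over-two-sided} to each marginal $\mu|_j$ with error $\epsilon$ and free-probability $q$ (the constant used in the definition of $\nu$) to obtain $d_2,\tau_2,\delta_2$; then set $d = \max(d_1,d_2)$, $\tau = \min(\tau_1,\tau_2)$, $\eta = \delta_2$, and $\delta = \gamma_1$. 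Every tuple $(x^{(1)},\dots,x^{(m)}) \in P^J$ has probability at least $\min(\mu)^{|J|}$ under $\mu^J$, so the hypothesis that $(f_1,\dots,f_m)$ is a $(\mu,\min(\mu)^{|J|}\delta)$-approximate generalized polymorphism of $P$ implies that for every such tuple the restricted tuple $(f_1|_{J \gets x^{(1)}},\dots,f_m|_{J \gets x^{(m)}})$ is itself a $(\mu,\delta)$-approximate generalized polymorphism of $P$, which is exactly what the counting lemma requires.

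Fix $j \in [m]$. By the regularity assumption, $f_j|_{J \gets x^{(j)}}$ fails to be $(d,\tau)$-regular with $\mu|_j^J$-probability at most $\epsilon$, and by \Cref{lem:good-most} it fails to be $(\mu,d,\tau)$-good with probability at most $m\epsilon$; these ``bad'' $x^{(j)}$ contribute at most $O(m\epsilon)=O(\epsilon)$ since the inner probability is at most $1$. I would then handle the ``good'' case in which $f_j|_{J \gets x^{(j)}}$ is both regular and good. A useful observation is that the marginal of $\nu$ on coordinate $j$ coincides with $\mu|_j$ conditional on being non-$\ast$, so for inflexible $j$ the vector $\rho|_j \in \{0,1\}^{J^c}$ is distributed exactly as $\mu|_j^{J^c}$.

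For flexible $j$ in the good case, set $p = \Ex_{\mu|_j}[f_j|_{J \gets x^{(j)}}]$. If $p \in [\epsilon,1-\epsilon]$, \Cref{thm:it-aint-over-two-sided} forces $\Ex_{\mu|_j}[f_j|_{J \gets x^{(j)},\rho|_j}] \in [\eta,1-\eta]$ with $\rho$-probability at least $1-\epsilon$, which makes $g_j^{\rho,\eta}|_{J \gets x^{(j)}} = f_j|_{J \gets x^{(j)}}$ on that event; the complementary $\epsilon$-fraction contributes at most $\epsilon$. If instead $p < \epsilon$, the events where $g_j^{\rho,\eta}|_{J \gets x^{(j)}} \in \{0,\ f_j|_{J \gets x^{(j)}}\}$ contribute at most $p < \epsilon$, and a Markov bound on the conditional expectation gives $\Pr_\rho[g_j^{\rho,\eta}|_{J \gets x^{(j)}} \equiv 1] \leq p/(1-\eta) = O(\epsilon)$, yielding $O(\epsilon)$ overall; the case $p > 1-\epsilon$ is symmetric.

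For inflexible $j$ in the good case, goodness provides a witness $(x^{(1)},\dots,x^{(m)}) \in P^J$ with every $f_k|_{J \gets x^{(k)}}$ being $(d,\tau)$-regular. Applying \Cref{lem:counting} to this witness, every $\alpha$ consistent with the coloring $\chi_\epsilon$ lies in $P$; if we had $\chi_\epsilon(j)=\ast$, then both some extension $\alpha$ and $\alpha^{\oplus j}$ would be consistent with $\chi_\epsilon$ and hence in $P$, contradicting the inflexibility of $j$. Thus $p \leq \epsilon$ or $p \geq 1-\epsilon$. Since $j$ is inflexible, $\rho|_j \in \{0,1\}^{J^c}$ and $f_j|_{J \gets x^{(j)},\rho|_j}$ is the constant $f_j(x^{(j)},\rho|_j)$, so $g_j^{\rho,\eta}|_{J \gets x^{(j)}} \equiv f_j(x^{(j)},\rho|_j)$; using $\rho|_j \sim \mu|_j^{J^c}$, the expected disagreement equals $2p(1-p) \leq 2\epsilon$. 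The hard part is precisely this inflexible case, where direct hypercontractivity does not apply and we must instead extract near-constancy of the subfunction from the counting lemma together with the inflexibility of $j$; modulo that, aligning the $\epsilon$-parameters so that \Cref{thm:it-aint-over-two-sided} and \Cref{lem:counting} fit inside a single pair $(d,\tau)$ is routine.
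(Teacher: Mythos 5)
Your proposal is correct and follows essentially the same approach as the paper's proof: apply \Cref{lem:counting} together with \Cref{lem:good-most} to reduce to good subfunctions, then split into flexible coordinates (handled via It Ain't Over Till It's Over with the observation that the $\nu$-marginal conditioned on $\neq *$ is $\mu|_j$) and inflexible coordinates (handled via the observation that the counting lemma plus inflexibility forces $\chi_\epsilon(j) \neq *$, and that $\nu|_j = \mu|_j$ exactly). The only cosmetic difference is that in the flexible case you invoke the two-sided \Cref{thm:it-aint-over-two-sided} for $p \in [\epsilon, 1-\epsilon]$ and a Markov bound for extreme $p$, whereas the paper bounds the two one-sided summands separately via \Cref{thm:it-aint-over}; both give the same $O(\epsilon)$ bound.
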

\begin{proof}
Apply \Cref{lem:counting} with $\epsilon := \epsilon$ to obtain $d', \tau', \gamma'$ such that the assumptions imply the following, assuming $d \geq d'$, $\tau \leq \tau'$, $\delta \leq \gamma'$. For all $(x^{(1)},\dots,x^{(m)}) \in P^J$, if $f_1|_{J \gets x^{(1)}},\dots,f|_{J \gets x^{(m)}}$ are all $(d,\tau)$-regular then all assignments extending $\chi_\epsilon$ belong to $P$.

For every flexible $j$, apply \Cref{thm:it-aint-over} (the one-sided version of \Cref{thm:it-aint-over-two-sided}) with $p := \Pr[\mu|_j = 1]$, $q := q$ (the parameter used to define $\nu$) and $\epsilon := \epsilon$ to obtain $d_j,\tau_j,\delta_j$ such that for $b \in \{0,1\}$ and $x^{(j)} \in \{0,1\}^J$, if $\chi_\epsilon(j) \neq b$  and $\eta \leq \delta_j$ then
\[
 \Pr_{\rho \sim \nu^J}[\chi^{\rho,\eta}(j) \neq b] \geq 1-\epsilon.
\]
(Note that $\chi_\epsilon(j) \neq b$ iff $\Pr_{\mu|_j}[f_j|_{J \gets x^{(j)}} = b \oplus 1] > \epsilon$, and $\chi^{\rho,\eta}(j) \neq b$ iff $\Pr_{\mu_j}[f_j|_{J \gets x^{(j)},\rho|_j} = b \oplus 1] > \eta$.)

Let $F$ be the set of flexible coordinates. We take $d = \max(d',(d_j)_{j \in F})$, $\tau = \min(\tau',(\tau_j)_{j \in F})$, $\delta = \gamma'$, $\eta = \min(\epsilon, (\delta_j)_{j \in F})$.

Given $j$, observe that
\begin{multline*}
 \Ex_{\rho \sim \nu^J}[\Pr_{\mu|_j}[g_j^{\rho,\eta} \neq f_j] ]\leq \Pr_{x^{(j)} \sim \mu|_j^J}[f_j|_{J \gets x^{(j)}} \text{ is not $(\mu,d,\tau)$-good}] + \\ \sum_{x^{(j)}\colon f_j|_{J \gets x^{(j)}} \text{ is  $(\mu,d,\tau)$-good}} \mu|_j(x^{(j)}) \Ex_{\rho \sim \nu^J} \Pr_{\mu|_j}[g_j^{\rho,\eta}|_{J \gets x^{(j)}} \neq f_j|_{J \gets x^{(j)}}].
\end{multline*}
The first summand is at most $m\epsilon$, and so it suffices to show that for all $x^{(j)}$ such that $f_j|_{J \gets x^{(j)}}$ is $(\mu,d,\tau)$-good,
\[
 \Ex_{\rho \sim \nu^J} \Pr_{\mu|_j}[g_j^{\rho,\eta}|_{J \gets x^{(j)}} \neq f_j|_{J \gets x^{(j)}}] = O(\epsilon).
\]

Unpacking the definition of $g_j^{\rho,\eta}$, the left-hand side is
\[
 \Pr_{\rho \sim \nu|_j^J}[\Ex[f_j|_{J \gets x^{(j)},\rho}] \leq \eta] \cdot \Ex_{\mu|_j}[f_j|_{J \gets x^{(j)}}] +
 \Pr_{\rho \sim \nu|_j^J}[\Ex[1-f_j|_{J \gets x^{(j)},\rho}] \leq \eta] \cdot \Ex_{\mu|_j}[1-f_j|_{J \gets x^{(j)}}].
\]
The two summands are similar, so it suffices to bound the first one.

We consider two cases, according to whether $j$ is flexible or not. If $j$ is flexible then either $\chi_\epsilon(j) = 0$, in which case $\Ex[f_j|_{J \gets x^{(j)}}] \leq \epsilon$, or $\chi_\epsilon(j) \neq 0$, in which case $\Pr_\rho[\chi^{\rho,\eta} = 0] \leq \epsilon$, that is, $\Pr_{\rho \sim \nu|_j^J}[\Ex[f_j|_{J \gets x^{(j)},\rho}] \leq \eta] \leq \epsilon$. In both cases, the summand is bounded by $\epsilon$.

If $j$ is inflexible then we need to use the fact that every extension of $\chi_\epsilon$ belongs to $P$. Since $j$ is inflexible, this implies that $\chi_\epsilon(j) \neq *$. We consider two subcases, according to the value of $\chi_\epsilon(j)$.
If $\chi_\epsilon(j) = 0$ then $\Ex[f_j|_{J \gets x^{(j)}}] \leq \epsilon$. If $\chi_\epsilon(j) = 1$ then $\Ex[f_j|_{J \gets x^{(j)}}] \geq 1 - \epsilon$. Since $j$ is inflexible, $\nu|_j = \mu|_j$, and so $f_j|_{J \gets x^{(j)}, \rho}$ is a constant, which equals~$0$ with probability at most $\epsilon$. In both cases, the summand is bounded by $\epsilon$.
\end{proof}

\subsection{Proof without short affine relations} \label{sec:main-generic}

If $P$ has no affine relations then we can complete the proof as in \Cref{sec:monotone} as follows. First, we apply Jones' regularity lemma on $f_1,\dots,f_m$ to obtain $J$. We then combine \Cref{lem:rounding-poly} and \Cref{lem:rounding-soundness} to find a restriction $\rho$ such that $(g_1^{\rho,\eta},\dots,g_m^{\rho,\eta})$ is a generalized polymorphism and $g_j^{\rho,\eta}$ is close to $f_j$ for all $j$.
In this section, we show how to modify this argument to handle linear relations of size at least $3$; handling smaller linear relations is easier and will be done in the next section.

For the remainder of the section, we assume that $P$ has no \emph{small} affine relations, meaning no affine relations of length smaller than~$3$.

The first step is to peel off all affine relations. Below we use the notation
\[
 \chi_{S,b}(x) = b \oplus \bigoplus_{i \in S} x_i.
\]

\begin{lemma}[Peeling affine relations] \label{lem:peeling}
Let $P$ be a predicate without small affine relations, and let $\mu$ be a distribution on $P$ with full support. There exist $\epsilon_0 > 0$ and sets $F \subseteq I \subseteq [m]$ such that $P|_I$ has no affine relations, and the following holds for all $\epsilon \leq \epsilon_0$.

Let $f_1,\dots,f_m\colon \{0,1\}^n \to \{0,1\}$ be a $(\mu,\epsilon)$-approximate generalized polymorphism of $P$.
\begin{enumerate}[(a)]
\item For each $j \notin F$ there exist $b_j \in \{0,1\}$ and $S_j \subseteq [n]$ such that $\Pr_{\mu|_j}[f_j \neq \chi_{S_j,b_j}] = O(\epsilon)$.
\item If $(g_j)_{j \in I}$ is a generalized polymorphism of $P|_I$ such that $g_j = \chi_{S_j,b_j}$ for all $j \in I \setminus F$, then we can extend it to a generalized polymorphism of $P$ by taking $g_j = \chi_{S_j,b_j}$ for $j \notin I$.
\end{enumerate}
\end{lemma}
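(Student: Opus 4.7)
The plan is to apply \Cref{thm:intro-blr} to every affine relation of $P$ and patch the resulting XOR approximations together. First I set up the combinatorial structure. Let $V \subseteq \mathbb{F}_2^m$ be the subspace of vectors $v$ such that $\langle v, \cdot\rangle$ is constant on $P$; since $P$ has no small affine relations, every nonzero $v \in V$ has weight at least $3$. Define $F \subseteq [m]$ to be the set of coordinates lying in the support of no nonzero $v \in V$. Pick a basis of $V$ in reduced row echelon form, let $J_0$ be its set of pivots, and put $I := [m] \setminus J_0$. Then $F \subseteq I$, and $P|_I$ has no affine relations (a relation with support in $I$ would avoid every pivot, contradicting the basis choice). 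Each pivot $j \in J_0$ comes with a \emph{defining} affine relation $w_j \oplus \bigoplus_{k \in T_j} w_k = c_j$ with $T_j \subseteq I \setminus F$, and a short check gives the decomposition
\[
 P = \{w \in \{0,1\}^m : w|_I \in P|_I \text{ and } w_j = c_j \oplus \bigoplus_{k \in T_j} w_k \text{ for every } j \in J_0\}.
\]

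For part (a), for every $j \notin F$ I fix an affine relation $T^{(j)} \subseteq [m]$ of $P$ (necessarily of weight at least $3$) containing $j$. The restriction of $(f_1,\dots,f_m)$ to the coordinates in $T^{(j)}$ is a $(\mu|_{T^{(j)}}, \epsilon)$-approximate generalized polymorphism of the parity predicate $P_{|T^{(j)}|, c}$ for the appropriate $c$. Applying \Cref{thm:intro-blr} yields a common set $S^{(j)} \subseteq [n]$ and constants $(b_k^{(j)})_{k \in T^{(j)}}$ such that $\Pr_{\mu|_k}[f_k \neq \chi_{S^{(j)}, b_k^{(j)}}] = O(\epsilon)$ for every $k \in T^{(j)}$. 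Set $S_j := S^{(j)}$ and $b_j := b_j^{(j)}$. I choose $\epsilon_0$ small enough (in terms of the marginals $\mu|_k$) that any two distinct affine functions disagree under each $\mu|_k^n$ by more than twice the implicit $O(\epsilon)$ constant; then the approximating XOR for each $f_j$ is unique. Consequently, whenever $j$ belongs to two affine relations $T, T'$, the two applications of \Cref{thm:intro-blr} must coincide at $j$, so in particular $S^{(T)} = S^{(T')}$ and $b_j^{(T)} = b_j^{(T')}$, making the choice of $T^{(j)}$ irrelevant.

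For part (b), extend $(g_j)_{j \in I}$ to $[m]$ by setting $g_j := \chi_{S_j, b_j}$ for $j \in J_0$. Fix $j \in J_0$ and its defining relation $\{j\} \cup T_j$: all coordinates in $\{j\} \cup T_j$ lie in $[m] \setminus F$ and share the affine relation, so by uniqueness they share a common index set $S$. Because the XOR tuple for this relation is produced by \Cref{thm:intro-blr} as an \emph{exact} generalized polymorphism of $P_{|T_j|+1, c_j}$, the constants satisfy the consistency identity $\bigoplus_{k \in \{j\} \cup T_j} b_k = (|S| \oplus 1) \cdot c_j$. Given any input $(x^{(k)})_{k \in [m]}$ whose columns lie in $P$, the $I$-part of $(g_k(x^{(k)}))$ lies in $P|_I$ by assumption, and for each $j \in J_0$ a direct computation combining the pointwise identity $\bigoplus_{k \in \{j\} \cup T_j} x^{(k)}_i = c_j$ with the consistency identity shows $g_j(x^{(j)}) = c_j \oplus \bigoplus_{k \in T_j} g_k(x^{(k)})$. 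By the structural decomposition of $P$ above, $(g_k(x^{(k)})) \in P$.

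The main conceptual obstacle will be arranging that the XOR labels $(S_j, b_j)$ from independent applications of \Cref{thm:intro-blr} to different affine relations are globally consistent; this is exactly what the uniqueness argument in part (a) guarantees, and what lets me invoke the \emph{exact} polymorphism guarantee of \Cref{thm:intro-blr} once per defining relation when verifying the extension in part (b).
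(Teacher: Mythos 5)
Your proof is correct and takes a genuinely different route from the paper. The paper constructs $F$ and $I$ by an iterative greedy process: repeatedly pick an affine relation $A$ whose support consists of still-active coordinates, apply \Cref{thm:intro-blr} to record characters for every $j \in A$, and deactivate one coordinate of $A$, terminating when no such $A$ remains; $I$ is the surviving active set and $F$ the coordinates never touched by any chosen $A$. The paper's uniqueness argument for the recorded characters matches yours, but its verification of Property~(b) is compressed into a single sentence (``if $w \in P|_I$ then there is a unique way to extend it to $P$''), leaving the reader to supply the consistency computation. You replace the greedy process with a single linear-algebraic object --- the $\mathbb{F}_2$-space $V$ of affine relations, an RREF basis, the pivot set $J_0$, and $I = [m] \setminus J_0$ --- which yields a canonical, process-independent choice of $F$ and $I$ (yours is the minimal valid $F$; the paper's may be strictly larger depending on the greedy choices, though either serves the lemma) and an explicit decomposition of $P$ via pivot equations. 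This makes Property~(b) a one-line computation once the consistency identity $\bigoplus_{k \in \{j\}\cup T_j} b_k = (|S|\oplus 1)c_j$ is extracted from the exact-polymorphism guarantee of \Cref{thm:intro-blr}. In short, your write-up is an organized, self-contained version of what the paper gestures at, and fills in the detail the paper omits for part~(b).

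One caveat applying equally to your argument and the paper's: both invoke \Cref{thm:intro-blr} on the marginal $\mu|_A$ for an affine relation $A$, but that theorem requires the distribution to have full support on $P_{|A|,c}$, and it is not automatic that $P|_A = P_{|A|,c}$ even when $P$ has no small affine relations (consider $P \subseteq \{0,1\}^4$ with $P|_{\{1,2,3\}} = \{000,110,011\}$ and the fourth coordinate unconstrained). This is a shared gap inherited from the paper, not one your proof introduces, but it is worth noting.
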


The proof uses \Cref{thm:intro-blr}, proved in \Cref{sec:linearity}, which is the special case of \Cref{thm:intro-main} for affine relations. 

\begin{proof}
We construct $F,I$ using an iterative process. During the process, each coordinate can be active or not active; originally all are active. Furthermore, each coordinate has a list of characters $\chi_{S,b}$, initially empty. The process ends once there is no affine relation involving only active coordinates.

Each step of the iteration proceeds as follows. Choose an affine relation involving the coordinates in some set $A$, all of them active. Since $P$ has no small affine relations, $|A| \geq 3$. Therefore we can apply \Cref{thm:intro-blr} to obtain, for each $j \in A$, a character $\chi_{S_j,b_j}$ such that $\Pr_{\mu|_j}[f_j \neq \chi_{S_j,b_j}] = O(\epsilon)$.
For each $j \in A$, we add the character $\chi_{S_j,b_j}$ to the list of characters for coordinate $j$. We also pick a coordinate $j_0 \in A$ arbitrarily, and render it inactive.

After the process ends, some of the lists are empty, and they comprise the set $F$. The set $I$ contains all active coordinates. Property~(a) is automatically satisfied.

The list of characters for each $j \notin F$ could contain more than one character. We can rule this out by taking $\epsilon_0$ small enough. Indeed, if the list for $j$ contains two different characters $\chi_{S',b'},\chi_{S'',b''}$ then
\[
 \Pr_{\mu|_j}[\chi_{S',b'} \neq \chi_{S'',b''}] = O(\epsilon).
\]
If $\epsilon_0$ is smaller than a constant, this rules out $S' = S''$.
Take any $i \in S' \triangle S''$, and sample coordinate $i$ last. Whether $\chi_{S',b'} \neq \chi_{S'',b''}$ or not depends on the value of coordinate $i$, and so the probability that $\chi_{S',b'} \neq \chi_{S'',b''}$ is at least $\min(\mu|_j(0), \mu|_j(1))$. Therefore, choosing $\epsilon_0 = c \min_{j,b} \mu|_j(b)$ for an appropriate $c > 0$ ensures that every non-empty list contains precisely one character.

It remains to prove Property~(b). For this, we observe that if $w \in P|_I$ then there is a unique way to extend it to an element in $P$. This is precisely how we extend the generalized polymorphism in Property~(b).
\end{proof}

We would like to apply the argument outlined in the beginning of the section to $P|_I$ in such a way that guarantees that $g^{\rho,\eta}_j = \chi_{S_j,b_j}$ for all $j \in I \setminus F$. We do this in two steps. First, we change $f_j$ to $f'_j = \chi_{S_j,b_j}$ for all $j \in I \setminus F$, which increases the error probability in a controlled way. Second, we ensure somehow that $g^{\rho,\eta}_j = f'_j$ for all $j \in I \setminus F$. This can be done in two ways: either $J$ contains $S_j$, or $S_j \setminus J$ is so large that $\chi^{\rho,\eta} = *$ is very likely, as given by the following lemma.

\begin{lemma}[Large characters are balanced] \label{lem:large-characters}
Let $\mu$ be a full support distribution on $\{0,1\}$. For every $q \in (0,1)$ and $\eta,\zeta > 0$ there exists $M \in \mathbb{N}$ such that the following holds.

Let $\nu$ be the distribution on $\{0,1,*\}$ given by $\nu(*) = q$ and $\nu(b) = (1-q)\mu(b)$.
If $f = \chi_{S,b}$ for $|S| \geq M$ and $b \in \{0,1\}$ then
\[
 \Pr_{\rho \sim \nu}[\eta < \Ex_{\mu}[f|_\rho] < 1-\eta] \geq 1 - \zeta.
\]
\end{lemma}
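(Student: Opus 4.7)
The plan is to reduce the statement to a concentration argument on the binomial distribution $\mathrm{Binomial}(|S|, q)$. Write $p = \mu(1) \in (0,1)$, and for a restriction $\rho \sim \nu$, let $T^* = S \cap \rho^{-1}(*)$ and $T = S \setminus T^*$. Then
\[
 f|_\rho(x) = \Bigl(b \oplus \bigoplus_{i \in T} \rho_i\Bigr) \oplus \bigoplus_{i \in T^*} x_i,
\]
where the first parenthesized expression is a constant $c \in \{0,1\}$ determined by $\rho$. Hence $\Ex_\mu[f|_\rho]$ depends only on $c$ and $k := |T^*|$. Setting $k := |T^*|$ and using the standard identity $\Ex_\mu[(-1)^{X_1 \oplus \cdots \oplus X_k}] = (1-2p)^k$ for i.i.d.\ $X_i \sim \mu$, we obtain
\[
 \Ex_\mu[f|_\rho] \in \left\{ \tfrac{1 - (1-2p)^k}{2},\; \tfrac{1 + (1-2p)^k}{2} \right\}.
\]

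The key observation is that since $p \in (0,1)$ we have $|1-2p| < 1$, and so $(1-2p)^k \to 0$ as $k \to \infty$. In particular there exists a constant $k_0 = k_0(p, \eta)$ such that $(1-2p)^{k_0} < 1 - 2\eta$, which forces both of the two possible values of $\Ex_\mu[f|_\rho]$ to lie strictly inside the interval $(\eta, 1-\eta)$ whenever $k \geq k_0$. Thus the event $\eta < \Ex_\mu[f|_\rho] < 1-\eta$ is implied by the event $k \geq k_0$.

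It remains to show that $k \geq k_0$ with probability at least $1-\zeta$ for $|S|$ large enough. But $k = |S \cap \rho^{-1}(*)|$ is distributed as $\mathrm{Binomial}(|S|, q)$ with mean $q|S|$, so by the standard Chernoff bound,
\[
 \Pr[k < k_0] \leq \Pr\bigl[k \leq q|S|/2\bigr] \leq e^{-q|S|/8},
\]
provided $|S| \geq 2k_0/q$. Choosing
\[
 M = \max\!\left( \frac{2k_0}{q},\; \frac{8\log(1/\zeta)}{q} \right)
\]
guarantees that both conditions hold, so that $\Pr[k \geq k_0] \geq 1 - \zeta$, completing the argument. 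The proof involves no real obstacle; the only moving parts are the elementary Fourier identity on $\{0,1\}$ and a Chernoff tail bound for the binomial.
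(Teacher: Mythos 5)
Your proof is correct and takes essentially the same approach as the paper's own sketched argument: first observe that a character on $k$ free coordinates has expectation within $\frac{1}{2}(1 \pm |1-2p|^k)$ (the paper's $M'$ plays the role of your $k_0$), then argue via a binomial concentration bound that the number of surviving free coordinates in $S$ is at least $k_0$ with probability $1-\zeta$. One small slip worth flagging: the condition that confines $\Ex_\mu[f|_\rho]$ to $(\eta,1-\eta)$ for $k \ge k_0$ is $|1-2p|^{k_0} < 1-2\eta$, not $(1-2p)^{k_0} < 1-2\eta$; when $p > 1/2$ (and $k_0$ odd) the latter is negative and hence trivially true without bounding the magnitude. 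Since $|1-2p|<1$, the corrected inequality holds for large enough $k_0$ and the rest of your argument goes through unchanged, so this is a typo rather than a gap.
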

\begin{proof}
The proof is straightforward, so we only outline it. Since $\mu$ has full support, we can find $M' \in \mathbb{N}$ such that if $f' = \chi_{S',b'}$ for $|S'| \geq M'$ then $\eta < \Ex_{\mu}[f'] < 1-\eta$. We can find $M$ for which $f|_\rho$ is of this form with probability $1 - \zeta$.
\end{proof}

We now put everything together.

\begin{proof}[Proof of \Cref{thm:intro-main} for predicates without short affine relations]
Apply \Cref{lem:peeling} to obtain $F \subseteq I \subseteq [m]$ and $\epsilon_0 > 0$.
Recall that the lemma states that $P|_I$ has no affine relations, and that the following holds for every $(\mu,\delta)$-approximate polymorphism of $P$, whenever $\delta \leq \epsilon_0$.

First, for each $j \notin F$, the function $f_j$ is $O(\delta)$-close to some $\chi_{S_j,b_j}$. Second, any generalized polymorphism $(g_j)_{j \in I}$ of $P|_I$ which satisfies $g_j = \chi_{S_j,b_j}$ for $j \in I \setminus F$ can be extended to a generalized polymorphism of $P$ by taking $g_j = \chi_{S_j,b_j}$ for $j \notin I$. 

We can assume without loss of generality that $\epsilon \leq \epsilon_0$ (otherwise replace $\epsilon$ with $\epsilon_0$).
We will prove the theorem with an error probability of $O(\epsilon)$ rather than $\epsilon$ for convenience.

\paragraph{Constructing the junta}

The junta is constructed by applying various lemmas proved in this section. In order to make the argument more readable, we briefly recall the statement of each lemma. We use the following notation: a \emph{random $J$-subfunction of $f'_j$} is obtained from $f'_j$ by restricting the coordinates in $J$ according to $\mu|_j$.
Also, $\rho$ is a random restriction of the coordinates in $J^c$ sampled according to the distribution $\nu$ defined in \Cref{sec:main-rounding}.

Apply \Cref{lem:rounding-soundness} (Soundness of rounding) with the predicate $P|_I$, the distribution $\mu|_I$, and $\epsilon := \epsilon$ to obtain $d',\delta',\eta',\tau'$ such that the following holds. Given a set $J$, suppose that for each $j \in I$, a random $J$-subfunction of $f'_j$ is $(d',\tau')$-regular w.p.\ $1 - \epsilon$; and that $(f_1,\dots,f_m)$ is a $(\mu,\min(\mu)^{|J|} \delta')$-approximate polymorphism of $P|_I$. Then for all $j$, $\Ex_\rho \Pr[g_j^{\rho,\eta'} \neq f'_j] = O(\epsilon)$.

Apply \Cref{thm:jones-junta} (Jones' regularity lemma) with $\epsilon := \epsilon$, $d = d'$, $\tau = \tau'$, and $p_j = \Pr[\mu|_j = 1]$ for all $j \in F$ to obtain $\mathcal{M}\colon \mathbb{N} \to \mathbb{N}$ such that the following holds. For each set $J_0$ there exists a set $J \subseteq J_0$ of size at most $\mathcal{M}(|J_0|)$ such that for all $j \in F$, a random $J$-subfunction of $f'_j$ is $(d',\tau')$-regular w.p.\ $1-\epsilon$.

For every $j \in I \setminus F$, apply \Cref{lem:large-characters} with $\mu := \mu|_j$, $\eta := \eta'$, $\zeta := 1/(2m)$, and the $q$ in the definition of $\nu$ (\Cref{sec:main-rounding}) to obtain $M_j$. The lemma implies that if $|S_j \setminus J| \geq M_j$ then for every $x^{(j)} \in \{0,1\}^J$ we have $\eta' < \Ex[\chi_{S_j,b_j}|_{J \gets x^{(j)}, \rho|_j}] < 1 - \eta'$ with probability $1-1/(2m)$ over the choice of $\rho$.

The set $J_0$ that we construct will be based on an application of \Cref{lem:peeling}, which will yield us a set $S_j$ for each $j \notin F$. We would like the set $J$ constructed by Jones' regularity lemma to satisfy the following, for each $j \in I \setminus F$: either $J$ contains $S_j$, or $|S_j \setminus J| \geq M_j$.
Later on, we determine a value of $M$, depending only on the function $\mathcal{M}$ and the parameters $M_j$ for $j \in I \setminus F$, such that such a set $J$ can be constructed of size at most $M$.

Apply \Cref{lem:rounding-poly} (Rounding lemma) with the predicate $P|_I$, the distribution $\mu|_I$, $\epsilon := \eta'$, and $\zeta := 1/(2m)$ to obtain $\delta''$ such that the following holds. If $(f'_1,\dots,f'_m)$ is a $(\mu,\min(\mu)^{|J|} \delta'')$-approximate polymorphism of $P|_I$ then $(g_j^{\rho,\eta'})_{j \in I}$ is a generalized polymorphism of $P|_I$ w.p.\ $1-1/(2m)$ over the choice of $\rho$.

In order to apply \Cref{lem:rounding-soundness,lem:rounding-poly}, we will need the error probability to be at most $\min(\mu)^{|J|} \min(\delta',\delta'')$. Consequently we choose
\[
 \delta = c \min(\mu)^M \min(\delta',\delta'',\epsilon)/(m+1),
\]
for an appropriate constant $c \in (0, 1)$; we will see where the additional $m+1$ factor comes from in a moment. (Recall that $\delta$ is the parameter such that $(f_1,\dots,f_m)$ is a $(\mu,\delta)$-approximate polymorphism of $P$.)

Apply \Cref{lem:peeling} to $f_1,\dots,f_m$ to obtain, for each $j \notin F$, a function $g_j = \chi_{S_j,b_j}$ such that $\Pr_{\mu|_j}[g_j \neq f_j] \leq \min(\mu)^M \min(\delta',\delta'',\epsilon)/(m+1)$ (this is where we get $c$ from). Define
\[
 f'_j = \begin{cases}
     g_j & \text{if } j \notin F, \\
     f_j & \text{if } j \in F.
 \end{cases}
\]

Observe that $(f'_j)_{j \in I}$ is a $(\mu|_I,\min(\mu)^M \min(\delta',\delta'',\epsilon))$-approximate generalized polymorphism of $P|_I$, summing up the probability that $(f_j)_{j \in I} \notin P|_I$ with the probabilities that $f_j$ and $f'_j$ differ.

\medskip

We can now construct $J$.
First, we construct $J_0$. Start with $J_0 := \emptyset$, and while some $j \in I \setminus F$ satisfies $|S_j \setminus J_0| < M_j + \mathcal{M}(|J_0|)$, add $S_j$ to $J_0$. The process terminates with a set $J_0$ such that for each $j \in I \setminus F$, either $J_0 \supseteq S_j$, or $|S_j \setminus J_0| \geq M_j + \mathcal{M}(|J_0|)$.

We proceed to bound the size of $J_0$. Let $M' = \max_{j \in I \setminus F} M_j$, and assume without loss of generality that $\mathcal{M}$ is monotone (otherwise, replace it with $\mathcal{M}'(s) := \max_{t \leq s} \mathcal{M}(t)$). Taking $B_0 = 0$ and $B_{t+1} = B_t + M' + \mathcal{M}(B_t)$, a simple induction shows that after adding $t$ many $S_j$'s, we have $|J_0| \leq B_t$. Thus the final size of $J_0$ is at most $B_{|I \setminus F|}$.

Finally, we obtain $J$ by applying Jones' regularity lemma to the functions $f'_j$ for $j \in F$. Observe that $|J| \leq M := \mathcal{M}(B_{|I \setminus F|})$. Furthermore, for each $j \in I \setminus F$, either $J_0 \supseteq S_j$, in which case $J \supseteq S_j$, or $|S_j \setminus J_0| \geq M_j + \mathcal{M}(|J_0|) \geq M_j + |J|$, in which case $|S_j \setminus J| \geq M_j$.

\paragraph{Finding a good restriction}

We would like to find a restriction $\rho$ such that all of the following hold:
\begin{enumerate}[(a)]
\item $(g^{\rho,\eta}_j)_{j \in I}$ is a generalized polymorphism of $P|_I$.
\item For each $j \in I \setminus F$, $g^{\rho,\eta}_j = f'_j$.
\item For each $j \in F$, $\Pr_{\mu|_j}[g^{\rho,\eta}_j \neq f'_j] = O(\epsilon)$.
\end{enumerate}
Given such $\rho$, we define $g_j = g^{\rho,\eta}_j$ for $j \in I$ to complete the proof via \Cref{lem:peeling}.

\Cref{lem:rounding-poly} implies that the first property holds with probability $1-1/(2m)$. 
We go on to the second property. Let $j \in I \setminus F$. If $S_j \subseteq J$ then $g^{\rho,\eta}_j = f'_j$ always. Otherwise, $|S_j \setminus J| \geq M_j$, and so \Cref{lem:large-characters} implies that $g^{\rho,\eta}_j = f'_j$ with probability at least $1 - 1/(2m)$ (crucially, the property $\chi_j^{\rho,\eta} = *$ doesn't depend on the input $x^{(j)}$ to $J$). In total, the first two properties hold with probability at least $1/2$.

\Cref{lem:rounding-soundness} implies that
\[
 \Ex_\rho \sum_{j \in F} \Pr_{\mu|_j}[g^{\rho,\eta}_j \neq f'_j] = O(\epsilon).
\]
Conditioning on the first two properties, this still holds (with the right-hand side doubled). Hence there exists a restriction $\rho$ satisfying all three properties, completing the proof.
\end{proof}

\subsection{Concluding the proof} \label{sec:main-conclusion}

In this short section, we complete the proof of \Cref{thm:intro-main} by handling short affine relations.

\begin{proof}[Proof of \Cref{thm:intro-main}]
We prove the theorem with an error term of $O(\epsilon)$ rather than $\epsilon$, for simplicity.

By negating coordinates, we can assume that all short affine relations are of the following forms: (i) $w_j = 0$ for all $w \in P$, (ii) $w_j = w_k$ for all $w \in P$.

Let $Z$ be the set of coordinates in $P$ that are always~$0$. Let $I$ consist of a choice of one coordinate from each set of equivalent non-constant coordinates. For each $j \in I$, let $I_j$ be the coordinates equivalent to $j$; possibly $I_j = \{j\}$. The sets $Z,(I_j)_{j \in I}$ thus partition $[m]$.

The predicate $P|_I$ has no short affine relations, and so the special case handled in \Cref{sec:main-generic} applies to it, giving us a value of $\delta$. We will prove the theorem for $\delta := \min(\delta, \epsilon, 1)$.

Let $(f_1,\dots,f_m)$ be a $(\mu,\delta)$-approximate generalized polymorphism of $P$. Applying the special case, we obtain a generalized polymorphism $(g_j)|_{j \in I}$ of $P|_I$. We complete it to a generalized polymorphism of $P$ by taking $g_j = 0$ for $j \in Z$ and $g_k = g_j$ for $k \in I_j$ (where $j \in I$).

It remains to bound the distance between $f_j$ and $g_j$ for $j \notin I$. If $j \in Z$ then $f_j(0,\dots,0) = 0$ since $(f_1,\dots,f_m)$ is an approximate polymorphism, and so $\Pr_{\mu|_j}[g_j \neq f_j] = 0$. If $k \in I_j$ then $\Pr_{\mu|_j}[f_k \neq f_j] \leq \delta$ and so $\Pr_{\mu|_j}[g_k \neq f_k] \leq \epsilon+\delta \leq 2\epsilon$. This completes the proof.
\end{proof}

\section{Linearity testing} \label{sec:linearity}

In this section we prove \Cref{thm:intro-blr}, which extends the BLR test to arbitrary distributions. The proof uses ideas from~\cite{DFH2025}, which extended a related test to a specific family of distributions.

\thmintroblr*

Before describing the proof, we observe when $\epsilon$ is small enough (an assumption we can make without loss of generality), we can guarantee the ``furthermore'' clause. Indeed, if $f_i = f_j$ and $\mu|_i = \mu|_j$ then
\[
 \Pr_{\mu|_i]}[g_i \neq g_j] \leq 2\epsilon.
\]
As shown during the course of the proof of \Cref{lem:large-characters}, for small enough $\epsilon$ (as a function only of $\mu|_i$) this implies that $g_i = g_j$.

The starting point of the proof is the case in which $\mu$ is the uniform distribution over $P_{m,b}$, which we denote by $\pi$. This case is standard, but for completeness we prove it in \Cref{sec:linearity-uniform}.

The general case is handled by reduction to the uniform distribution, using an agreement theorem from~\cite{DFH2025}. The basic idea is to sample $x \sim \mu$ in two steps. In the first step, we sample $x \sim \nu$ with probability $1-q$ (for appropriate $\nu,q$), and leave it unsampled with probability $q$. In the second case, if $x$ wasn't sampled, we sample it according to $\pi$.

If we stop after the first step, we are in a position to apply the result of \Cref{sec:linearity-uniform}, deducing that the resulting subfunctions are close to characters. We show that these characters agree with each other, and apply the agreement theorem to deduce that on average, they are restrictions of the same character (up to sign).

Our argument will require two agreement theorems: one from \cite{DFH2025}, and a folklore result about mixing Markov chains which we prove in \Cref{sec:mixing}.
The reduction itself is described in \Cref{sec:linearity-arbitrary}, where we also provide a more detailed outline.

\subsection{Uniform distribution}
\label{sec:linearity-uniform}

In this section we prove \Cref{thm:intro-blr} in the case of the uniform distribution. The proof follows the classical argument of~\cite{BCHKS96}.

\begin{proof}[Proof of \Cref{thm:intro-blr} for the uniform distribution]
Since we are going to use Fourier analysis, it will be more convenient to switch from $\{0,1\}$ to $\{-1,1\}$. Accordingly, we assume that $f_1,\dots,f_m\colon \{-1,1\}^n \to \{-1,1\}$ satisfy
\[
 \Pr_{x^{(1)},\dots,x^{(m)}}[f_1(x^{(1)}) \cdots f_m(x^{(m)}) = B] \geq 1 - \delta,
\]
where $B = (-1)^b$ and $x^{(1)},\dots,x^{(m)}$ are sampled as follows: the first $m-1$ are sampled uniformly and independently, and $x^{(m)}_i = B x^{(1)}_i \cdots x^{(m-1)}_i$.

The assumption is equivalent to
\[
 \Ex_{x^{(1)},\dots,x^{(m)}}[B f_1(x^{(1)}) \cdots f_m(x^{(m)})] \geq 1 - 2\delta.
\]
Substituting the Fourier expansions gives
\[
 1 - 2\delta \leq \sum_{S_1,\dots,S_m} B \hat{f}_1(S_1) \cdots \hat{f}_m(S_m) \Ex_{x^{(1)},\dots,x^{(m-1)}}[\chi_{S_1}(x^{(1)}) \cdot \chi_{S_{m-1}}(x^{(m-1)}) \chi_{S_m}(B x^{(1)} \cdots x^{(m-1)})],
\]
where $\chi_S(x) = \prod_{i \in S} x_i$. Multiplicativity and orthogonality of characters shows that the expectation vanishes unless all $S_j$ are equal, and so we obtain
\[
 1 - 2\delta \leq \sum_S B^{|S|+1} \hat{f}_1(S) \cdots \hat{f}_m(S) \leq \max_S |\hat{f}_1(S)| \sum_S |\hat{f}_2(S) \cdots \hat{f}_m(S)| \leq \max_S |\hat{f}_1(S)| \sum_S |\hat{f}_2(S) \hat{f}_3(S)|,
\]
where we used $m \geq 3$.

At this point, we apply the Cauchy--Schwarz inequality to bound the second factor on the right:
\[
 1 - 2\delta \leq \max_S |\hat{f}_1(S)| \sqrt{\sum_S \hat{f}_2(S)^2} \sqrt{\sum_S \hat{f}_3(S)^2} = \max_S |\hat{f}_1(S)|.
\]
Therefore there exists $S_1$ such that $|\hat{f}_1(S_1)| \geq 1 - 2\delta$. Let $B_1$ be the sign of $\hat{f}_1(S_1)$. Then
\[
 \Pr[f_1 = B_1 \chi_{S_1}] \geq 1-\delta.
\]
Accordingly, we choose $\delta = \epsilon$.

In exactly the same way, we can find $B_i \chi_{S_i}$ approximating $f_2,\dots,f_m$:
\[
 \Pr[f_i = B_i \chi_{S_i}] \geq 1-\delta.
\]
It follows that for $B' = B_1 \cdots B_m$,
\[
 \Pr_{x^{(1)},\dots,x^{(m)}}[f_1(x^{(1)}) \cdots f_m(x^{(m)}) = B' \chi_{S_1}(x^{(1)}) \cdots \chi_{S_m}(x^{(m)})] \geq 1 - m\delta,
\]
and so
\[
 \Pr_{x^{(1)},\dots,x^{(m)}}[B' \chi_{S_1}(x^{(1)}) \cdots \chi_{S_m}(x^{(m)}) = B] \geq 1 - (m+1)\delta.
\]

At this point we recall the definition of $x^{(m)}$, which implies that
\[
 B' \chi_{S_1}(x^{(1)}) \cdots \chi_{S_m}(x^{(m)}) = B'B^{|S_m|} \chi_{S_1 \triangle S_m}(x^{(1)}) \cdots \chi_{S_{m-1} \triangle S_m}(x^{(m-1)}).
\]
If not all $S_i$ are equal then
\[
 \Pr_{x^{(1)},\dots,x^{(m)}}[B' \chi_{S_1}(x^{(1)}) \cdots \chi_{S_m}(x^{(m)}) = B] = \frac{1}{2},
\]
which we can rule out by ensuring, without loss of generality, that $\epsilon = \delta < 1/(2(m+1))$. Thus all $S_i$ are equal, concluding the proof.
\end{proof}

\subsection{Agreement lemma for mixing Markov chains}
\label{sec:mixing}

During the proof of \Cref{thm:intro-blr} we will encounter the following situation. There is a function $f\colon X \to \Sigma$ and a way to sample $x,y \in X$ in a coupled fashion such that $(x,y)$ and $(y,x)$ have the same distribution. Given $\Pr[f(x) \neq f(y)] \leq \epsilon$, we would like to deduce that $\Pr[f(x) \neq \sigma] = O(\epsilon)$ for some $\sigma \in \Sigma$. 

We can describe the coupling as a symmetric bistochastic $X \times X$ matrix $M$ whose entries describe the coupling. We denote by $\lambda(M)$ the second largest eigenvalue of $M$. If $\lambda(M) < 1$ then $M$ has a unique stationary distribution $\mu(M)$.

\begin{lemma}[Agreement lemma for Markov chains] \label{lem:agreement-lambda}
Let $X,\Sigma$ be finite sets, and let $M$ be a symmetric bistochastic $X \times X$ matrix with $\lambda = \lambda(M) < 1$ and stationary distribution $\mu = \mu(M)$.

If a function $f\colon X \to \Sigma$ satisfies
\[
 \Pr_{(x,y) \sim M}[f(x) \neq f(y)] \leq \epsilon
\]
then there exists $\sigma \in \Sigma$ such that
\[
 \Pr_{x \sim \mu}[f(x) \neq \sigma] \leq \frac{\epsilon}{1-\lambda}.
\]
\end{lemma}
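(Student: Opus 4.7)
The plan is to take $\sigma$ to be a plurality value of $f$ under $\mu$ and to lower-bound its $\mu$-mass via the spectral expansion of $M$. First, observe that a symmetric bistochastic matrix has the uniform measure as its stationary distribution, so $\mu$ is uniform on $X$, and we may work in $L^2(\mu)$ in the standard way; in particular, the constant function $\mathbf{1}$ is the top eigenvector of $M$ with eigenvalue $1$, and every vector orthogonal to $\mathbf{1}$ has Rayleigh quotient at most $\lambda = \lambda(M)$.

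For each $\sigma \in \Sigma$, write $A_\sigma = f^{-1}(\sigma)$ and $a_\sigma = \mu(A_\sigma)$, and decompose $\mathbf{1}_{A_\sigma} = a_\sigma \mathbf{1} + g_\sigma$ with $g_\sigma$ orthogonal to the constant function, so that $\|g_\sigma\|^2 = a_\sigma(1-a_\sigma)$. Since the classes $A_\sigma$ partition $X$,
\[
 \Pr_{(x,y) \sim M}[f(x) = f(y)] = \sum_{\sigma} \langle \mathbf{1}_{A_\sigma}, M \mathbf{1}_{A_\sigma}\rangle = \sum_{\sigma} a_\sigma^2 + \sum_\sigma \langle g_\sigma, M g_\sigma\rangle.
\]
Applying the spectral bound $\langle g_\sigma, M g_\sigma \rangle \leq \lambda \|g_\sigma\|^2 = \lambda a_\sigma(1-a_\sigma)$ to each summand yields
\[
 \Pr[f(x) = f(y)] \leq \sum_\sigma a_\sigma^2 + \lambda\Bigl(1 - \sum_\sigma a_\sigma^2\Bigr) = \lambda + (1-\lambda)\sum_\sigma a_\sigma^2.
\]

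Combining this with the hypothesis $\Pr[f(x) = f(y)] \geq 1 - \epsilon$ and rearranging gives $\sum_\sigma a_\sigma^2 \geq 1 - \epsilon/(1-\lambda)$. Let $\sigma^* \in \Sigma$ attain the maximum $a_{\sigma^*} = \max_\sigma a_\sigma$. Then $\sum_\sigma a_\sigma^2 \leq a_{\sigma^*} \sum_\sigma a_\sigma = a_{\sigma^*}$, so $a_{\sigma^*} \geq 1 - \epsilon/(1-\lambda)$ and hence $\Pr_\mu[f \neq \sigma^*] \leq \epsilon/(1-\lambda)$.

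The only subtle point is ensuring the right spectral inequality is used: we bound the quadratic form $\langle g_\sigma, M g_\sigma\rangle$ only from above, so it is enough that the second largest eigenvalue of $M$ is at most $\lambda$; we do not need a bound of the form $|\lambda_i| \leq \lambda$, which would be needed for a two-sided estimate. Aside from this, the argument is a routine application of the expander mixing lemma tailored to a partition into color classes.
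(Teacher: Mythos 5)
Your proof is correct and is essentially the same spectral argument as the paper's: both center the indicators $\mathbf{1}_{A_\sigma}$, apply the bound $\langle g_\sigma, M g_\sigma\rangle \leq \lambda\|g_\sigma\|^2$ for $g_\sigma \perp \mathbf{1}$, derive $\sum_\sigma a_\sigma^2 \geq 1 - \epsilon/(1-\lambda)$, and finish with $\sum_\sigma a_\sigma^2 \leq \max_\sigma a_\sigma$ (equivalently, the paper's $\min_\sigma(1-a_\sigma) \leq \sum_\sigma a_\sigma(1-a_\sigma)$). You organize the algebra via agreement probability and quadratic forms where the paper uses disagreement probability and $\Ex[(f_\sigma(x)-f_\sigma(y))^2]$, but these are the same computation, and your observation that only a one-sided eigenvalue bound is needed applies equally to the paper's proof.
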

\begin{proof}
For $\sigma \in \Sigma$, let
\[
 f_\sigma(x) = \begin{cases}
     1 & \text{if } f(x) = \sigma, \\
     0 & \text{if } f(x) \neq \sigma.
 \end{cases}
\]

Let $g_\sigma(x) = f_\sigma(x) - \Ex_\mu[f_\sigma]$, so that $\Ex_\mu[g_\sigma] = 0$. The assumptions on $M$ imply that
\[
 \Ex_{(x,y) \sim M}[g_\sigma(x) g_\sigma(y)] \leq \lambda \Ex_{x \sim \mu}[g_\sigma(x)^2].
\]

We can relate the left-hand side to agreement for $f_\sigma$:
\begin{align*}
 \Pr_{(x,y) \sim M}[f_\sigma(x) \neq f_\sigma(y)] &=
 \Ex_{(x,y) \sim M}[(f_\sigma(x) - f_\sigma(y))^2] \\ &=
 \Ex_{(x,y) \sim M}[(g_\sigma(x) - g_\sigma(y))^2] = 
 2\Ex_{x \sim \mu}[g_\sigma(x)^2] - 2\Ex_{(x,y) \sim M}[g_\sigma(x) g_\sigma(y)].
\end{align*}
It follows that
\[
 \Pr_{(x,y) \sim M}[f_\sigma(x) \neq f_\sigma(y)] \geq 2\Ex_{x \sim \mu}[g_\sigma(x)^2] - 2\Ex_{(x,y) \sim M}[g_\sigma(x) g_\sigma(y)] \geq 2(1-\lambda)\Ex_{x \sim \mu}[g_\sigma(x)^2].
\]

In a completely analogous way, we obtain
\[
 \Pr_{x,y \sim \mu}[f_\sigma(x) \neq f_\sigma(y)] = 2\Ex_{x \sim \mu}[g_\sigma(x)^2] - 2\Ex_{x,y \sim \mu}[g_\sigma(x) g_\sigma(y)] = 2\Ex_{x \sim \mu}[g_\sigma(x)^2] \leq \frac{\Pr_{(x,y) \sim M}[f_\sigma(x) \neq f_\sigma(y)]}{1-\lambda}.
\]

We now circle back to $f$. Summing the above inequality over all $\sigma$, we obtain
\[
 \sum_{\sigma \in \Sigma} \Pr_{x,y \sim \mu}[f_\sigma(x) \neq f_\sigma(y)] \leq \frac{1}{1-\lambda} \sum_{\sigma \in \Sigma} \Pr_{(x,y) \sim M}[f_\sigma(x) \neq f_\sigma(y)] = \frac{2\Pr_{(x,y) \sim M}[f(x) \neq f(y)]}{1 - \lambda},
\]
since given that $f(x) \neq f(y)$, we obtain $f_\sigma(x) \neq f_\sigma(y)$ for precisely two choices of $\sigma$, namely $f(x)$ and $f(y)$. As for the left-hand side, it is equal to
\[
2\sum_{\sigma \in \Sigma} \Pr_{x \sim \mu}[f(x) \neq \sigma] \Pr_{y \sim \mu}[f(y) = \sigma] \geq 2\min_{\sigma \in \Sigma} \Pr_{x \sim \mu}[f(x) \neq \sigma] \sum_{\sigma \in \Sigma} \Pr_{y \sim \mu}[f(y) = \sigma] = 2\min_{\sigma \in \Sigma} \Pr_{x \sim \mu}[f(x) \neq \sigma].
\]
Plugging this in the previous inequality, we conclude that
\[
 \min_{\sigma \in \Sigma} \Pr_{x \sim \mu}[f(x) \neq \sigma] \leq \frac{\Pr_{(x,y) \sim M}[f(x) \neq f(y)]}{1 - \lambda} \leq \frac{\epsilon}{1 - \lambda}. \qedhere
\]
\end{proof}

In our application, $M$ will be a product chain where each factor has full support (a condition which can be replaced by ergodicity) and there are finitely many types of factors. In this case we can bound $\lambda(M)$ irrespective of the number of factors.

\begin{lemma}[Agreement lemma for product chains] \label{lem:agreement-product}
Let $Y,\Sigma$ be finite sets, and let $M_1,\dots,M_t$ be a finite number of symmetric bistochastic $Y \times Y$ matrices with strictly positive entries. There exists $C > 0$ such that the following holds.

Given $\psi\colon [n] \to [t]$, define the distribution $M_\psi$ on $Y^n \times Y^n$ by sampling the $i$'th coordinate independently according to $M_{\psi(i)}$. Let $\mu_\psi$ be the corresponding stationary distribution, obtained by sampling the $i$'th coordinate independently according to $\mu(M_{\psi(i)})$.

If a function $f\colon Y^n \to \Sigma$ satisfies
\[
 \Pr_{(x,y) \sim M_\psi}[f(x) \neq f(y)] \leq \epsilon
\]
then there exists $\sigma \in \Sigma$ such that
\[
 \Pr_{x \sim \mu_\psi}[f(x) \neq \sigma] \leq C\epsilon.
\]
\end{lemma}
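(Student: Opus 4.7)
The plan is to reduce directly to \Cref{lem:agreement-lambda} by establishing a uniform spectral gap for the chains $M_\psi$: I would show that the second largest eigenvalue $\lambda(M_\psi)$ is bounded by a constant $\alpha^* < 1$ depending only on the finite collection $M_1,\dots,M_t$, after which the conclusion follows from \Cref{lem:agreement-lambda} with $C = 1/(1-\alpha^*)$.

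To produce the uniform bound, I would observe that by construction $M_\psi$ is the tensor product $M_{\psi(1)} \otimes \cdots \otimes M_{\psi(n)}$ of Markov transition matrices on $Y^n$, so its eigenvalues are exactly the products $\prod_{i=1}^n \lambda_{i,k_i}$, where $\lambda_{i,k_i}$ ranges over the eigenvalues of $M_{\psi(i)}$. Each $M_j$ is symmetric bistochastic with strictly positive entries, hence irreducible and aperiodic, so its top eigenvalue $1$ is simple and every other eigenvalue has magnitude strictly less than $1$. I would let $\alpha_j$ be this maximum non-top magnitude and set $\alpha^* := \max_{j \in [t]} \alpha_j$, which is strictly less than $1$ since $t$ is finite. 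Every eigenvalue of $M_\psi$ other than $1$ is a product $\prod_i \lambda_{i,k_i}$ with at least one $k_i \geq 2$; that coordinate contributes a factor of magnitude at most $\alpha_{\psi(i)} \leq \alpha^*$, while every remaining factor has magnitude at most $1$, so the product has magnitude at most $\alpha^*$. Since $M_\psi$ is symmetric, this bound on magnitudes also bounds the second largest real eigenvalue, giving $\lambda(M_\psi) \leq \alpha^* < 1$.

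With this gap in place, applying \Cref{lem:agreement-lambda} to the hypothesis $\Pr_{(x,y) \sim M_\psi}[f(x) \neq f(y)] \leq \epsilon$ yields some $\sigma \in \Sigma$ with $\Pr_{x \sim \mu_\psi}[f(x) \neq \sigma] \leq \epsilon/(1-\alpha^*)$, so taking $C := 1/(1-\alpha^*)$ completes the argument. I do not expect any genuine obstacle: the crux is just a uniform spectral gap for tensor products of chains drawn from a finite family of aperiodic irreducible matrices, which is standard, and the stationarity of $\mu_\psi$ under $M_\psi$ follows coordinatewise from the stationarity of each $\mu(M_{\psi(i)})$ under $M_{\psi(i)}$.
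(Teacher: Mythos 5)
Your proposal is correct and takes essentially the same route as the paper: both reduce to \Cref{lem:agreement-lambda} via a uniform spectral gap, observing that the eigenvalues of the product chain $M_\psi$ are products of factor eigenvalues, and that positivity of entries gives each factor a gap uniformly over the finite family. The paper states this in two sentences; you spell out the tensor-product eigenvalue structure and the irreducibility/aperiodicity argument in detail, but the underlying idea is identical.
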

\begin{proof}
Given \Cref{lem:agreement-lambda}, it suffices to find $\lambda$ such that $\lambda(M_\psi) \leq \lambda$ for all $\psi$; we can then take $C = 1/(1-\lambda)$.

Since all entries of $M_1,\dots,M_t$ are positive, we can find $\lambda < 1$ such that all eigenvalues of $M_1,\dots,M_t$ other than the top ones are bounded by $\lambda$ in absolute value. It immediately follows that $\lambda(M_\psi) \leq \lambda$.
\end{proof}

\subsection{Arbitrary distributions}
\label{sec:linearity-arbitrary}

In this section we prove \Cref{thm:intro-blr} in full generality, by reduction to the special case $\mu = \pi$ (recall that $\pi$ is the uniform distribution over $P_{m,b}$).

For small enough $q > 0$, we can find a distribution $\nu$ such that
\[
 \mu = q \pi + (1-q) \nu.
\]
Indeed, such a distribution exists whenever $q \leq \min_{w \in P_{m,b}} \mu(w)/\pi(w)$. We furthermore choose $q$ so that $1/q$ is an integer (this will slightly simplify some future argument).

We can sample $x \sim \mu^n$ using a three-step process:
\begin{enumerate}
    \item Let $S \sim \mu_q^n$. This means that $S$ is a subset of $[n]$ chosen by including each $i$ with probability $q$ independently.
    \item If $i \notin S$, sample $x_i$ according to $\nu$.
    \item If $i \in S$, sample $x_i$ according to $\pi$.
\end{enumerate}
If we stop after the first two steps, then the remaining step is a sample from $\pi^S$, which we can analyze using the special case of \Cref{thm:intro-blr}.

\begin{lemma}[Reduction to uniform distribution] \label{lem:blr0}
For every $S \subseteq [m]$ and $\alpha \in P_{m,b}^S$ there exist $b_1(S,\alpha),\dots,b_m(S,\alpha) \in \{0,1\}$ and $A(S,\alpha) \subseteq S$ such that for all $j \in [m]$,
\[
 \Ex_{(S,\alpha) \sim (\mu_q^n, \nu^{S^c})} \Pr_{\pi|_j}[f_j|_{S^c \gets \alpha|_j} \neq \chi_{A(S,\alpha),b_j(S,\alpha)}] = O(\epsilon).
\]
\end{lemma}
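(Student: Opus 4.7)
\medskip

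\noindent\textbf{Proof plan.} The plan is to leverage the three-step sampling of $\mu^n$ described just before the lemma statement. Conditioning on the outcome of the first two steps (the set $S \sim \mu_q^n$ and the restriction $\alpha \in P_{m,b}^{S^c}$ sampled from $\nu^{S^c}$), the third step is a draw from $\pi^S$. Hence the assumption that $(f_1,\dots,f_m)$ is a $(\mu,\delta)$-approximate generalized polymorphism of $P_{m,b}$ is equivalent to
\[
 \Ex_{(S,\alpha)}\bigl[\delta(S,\alpha)\bigr] \leq \delta,
\]
where
\[
 \delta(S,\alpha) := \Pr_{(x^{(1)},\dots,x^{(m)}) \sim \pi^S}\bigl[(f_1|_{S^c \gets \alpha|_1}(x^{(1)}),\dots,f_m|_{S^c \gets \alpha|_m}(x^{(m)})) \notin P_{m,b}\bigr].
\]
That is, for $(S,\alpha)$ drawn from the product of the first two steps, the restricted tuple is, on average, an approximate generalized polymorphism of $P_{m,b}$ with respect to the \emph{uniform} distribution $\pi$ on the coordinates in $S$.

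Next I would invoke the uniform case proved in \Cref{sec:linearity-uniform}: there is a constant $\delta_0 = \delta_0(m) > 0$ (taken smaller than the $1/(2(m+1))$ threshold that appears there) such that whenever $\delta(S,\alpha) \leq \delta_0$, the uniform-distribution BLR analysis produces a common set $A(S,\alpha) \subseteq S$ and signs $b_1(S,\alpha),\dots,b_m(S,\alpha) \in \{0,1\}$ with
\[
 \Pr_{\pi|_j}\bigl[f_j|_{S^c \gets \alpha|_j} \neq \chi_{A(S,\alpha),b_j(S,\alpha)}\bigr] \leq (m+1)\,\delta(S,\alpha)
\]
for every $j \in [m]$. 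For pairs $(S,\alpha)$ with $\delta(S,\alpha) > \delta_0$, we simply define $A(S,\alpha)$ and $b_j(S,\alpha)$ arbitrarily; the crude bound $1$ on the closeness will be absorbed by the rare probability of landing in this ``bad'' regime.

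Finally, the expected closeness is controlled via Markov's inequality. We have $\Pr_{(S,\alpha)}[\delta(S,\alpha) > \delta_0] \leq \delta/\delta_0$, so
\[
 \Ex_{(S,\alpha)} \Pr_{\pi|_j}\bigl[f_j|_{S^c \gets \alpha|_j} \neq \chi_{A(S,\alpha),b_j(S,\alpha)}\bigr]
 \leq \frac{\delta}{\delta_0} + (m+1)\,\delta
 = O(\delta) = O(\epsilon),
\]
using the hypothesis $\delta = \Theta(\epsilon)$ (with the constant chosen small enough as a function of $m$ so that $\delta \leq \delta_0$). This yields the lemma.

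The only mildly delicate point is making sure that for good pairs $(S,\alpha)$, the uniform-distribution BLR result can actually be applied to the restricted tuple on $\{0,1\}^S$; this requires observing that each column $(x^{(1)}_i,\dots,x^{(m)}_i)$ for $i \in S^c$ already lies in $P_{m,b}$ by construction of $\nu$, so the approximate-polymorphism condition for the restricted tuple under $\pi^S$ is a clean instance of the uniform case. I do not anticipate any obstacle beyond this bookkeeping; the core work is already done in \Cref{sec:linearity-uniform}, and the rest is the standard restriction-plus-Markov reduction used throughout the paper.
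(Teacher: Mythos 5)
Your proof is correct and follows essentially the same route as the paper: define the conditional failure probability $\epsilon(S,\alpha)$, observe that its expectation over $(S,\alpha)$ is at most the overall failure probability, apply the uniform-distribution case of the theorem to each restriction, and take expectations. The paper compresses your explicit threshold-and-Markov step into the single statement that the pointwise closeness is $O(\epsilon(S,\alpha))$ (which holds unconditionally, since when $\epsilon(S,\alpha)$ exceeds the constant $1/(2(m+1))$ the trivial bound of $1$ is already $O(\epsilon(S,\alpha))$), but this is the same calculation.
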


Here $\chi_{A,b}(x) = b \oplus \bigoplus_{i \in A} x_i$. We also use $\chi_A = \chi_{A,0}$.

\begin{proof}
For every $S$ and every $\alpha \in P_{m,b}^S$, define
\[
 \epsilon(S, \alpha) = \Pr_{\beta \sim \pi^S}[(f_1|_{S^c \gets \alpha|_1}(\beta|_1), \dots, f_m|_{S^c \gets \alpha|_m}(\beta|_m)) \notin P_{m,b}],
\]
observing that
\[
 \Ex_{(S,\alpha) \sim (\mu_q^n, \nu^{S^c})}[\epsilon(S, \alpha)] = \Pr_{\mu^S}[(f_1,\dots,f_m) \notin P_{m,b}] \leq \epsilon.
\]

Apply the special case of the uniform distribution to every $(S,\alpha)$ to obtain $b_1(S,\alpha),\dots,b_m(S,\alpha) \in \{0,1\}$ and $A(S,\alpha) \subseteq S$ such that for all $j \in [m]$,
\[
 \Pr_{\pi|_j}[f_j|_{S^c \gets \alpha|_j} \neq \chi_{A(S,\alpha),b_j(S,\alpha)}] = O(\epsilon(S,\alpha)).
\]
The result immediately follows.
\end{proof}

The rest of the proof comprises the following steps:
\begin{enumerate}
    \item Using the agreement theorem of~\cite{DFH2025} we show that for each $\alpha \in P_{m,b}^n$ there exists a consensus set $A(\alpha)$ such that typically $A(S,\alpha|_{S^c}) = A(\alpha) \cap S$.
    \item Using \Cref{lem:agreement-product} we show that there exists a consensus set $A$ such that typically $A(\alpha) = A$.
    \item Using \Cref{lem:agreement-product} we show that the functions $f_j \oplus \chi_{A,b_j}$ are close to constant, completing the proof.
\end{enumerate}

\subsubsection{Step 1}

The first step constitutes the following lemma.

\begin{lemma}[Agreement over $S$] \label{lem:blr1}
For every $\alpha \in P_{m,b}^n$ there exists $A(\alpha) \subseteq [n]$ such that
\[
 \Ex_{\alpha \sim \nu^n} \Pr_{S \sim \mu_q}[A(S,\alpha|_{S^c}) \neq A(\alpha)\cap S] = O(\epsilon).
\]
\end{lemma}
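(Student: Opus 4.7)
The plan is to apply the agreement theorem of~\cite{DFH2025} for each fixed $\alpha \in P_{m,b}^n$, treating the characters $\chi_{A(S,\alpha|_{S^c}),\,b_j(S,\alpha|_{S^c})}$ produced by \Cref{lem:blr0} as a family of local linear approximations of $f_j$ on cylindrical slices. The cylinder indexed by $S$ is $\{y \in \{0,1\}^n : y|_{S^c} = \alpha|_j|_{S^c}\}$, which is just the domain of $f_j|_{S^c \gets \alpha|_j}$ embedded back into $\{0,1\}^n$. Because (by \Cref{lem:blr0}) the function $f_j$ is approximately linear on most such cylinders, the DFH agreement theorem --- designed for precisely this kind of agreement-on-random-subcubes scenario --- should extract a single global character $\chi_{A_j(\alpha),\,b_j(\alpha)}$ on $\{0,1\}^n$ whose restriction to each good cylinder recovers the local character. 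In particular, one gets $A(S,\alpha|_{S^c}) = A_j(\alpha) \cap S$ for a $1 - O(\epsilon_j(\alpha))$ fraction of $S$, where $\epsilon_j(\alpha)$ denotes the $\alpha$-slice of the expectation bound in \Cref{lem:blr0}.

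Next I need $A_j(\alpha)$ to be independent of $j$ so that $A(\alpha) := A_1(\alpha)$ is a meaningful common set. This is inherited from the uniform-distribution BLR analysis of \Cref{sec:linearity-uniform}, where the argument already forces $A(S,\alpha|_{S^c})$ to agree across the index $j$ for every good $(S, \alpha|_{S^c})$. Given this, one has $A_j(\alpha) \cap S = A_{j'}(\alpha) \cap S$ for a $1 - O(\epsilon)$ fraction of $S \sim \mu_q^n$; since $S$ hits each coordinate with probability $q$, any element of $A_j(\alpha) \triangle A_{j'}(\alpha)$ would break this equality with positive probability, forcing $A_j(\alpha) = A_{j'}(\alpha)$. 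Integrating the per-$\alpha$ bounds of the agreement theorem over $\alpha \sim \nu^n$ and using \Cref{lem:blr0} then yields the required $O(\epsilon)$ expectation bound. For the (measure-zero, or anyway small-measure) $\alpha$ on which the agreement conclusion fails, we simply set $A(\alpha) := \emptyset$; its contribution is absorbed into the $O(\epsilon)$ slack.

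The main obstacle will be invoking the DFH agreement theorem in the exact form we need: their theorem is stated for a specific biased affine test and its natural family of slices, and we will need either to cast our setting (random cylinders through $\alpha|_j$ under subset distribution $\mu_q^n$) into theirs, or to adapt their proof. A secondary subtlety is the distribution mismatch between $\nu|_j$ (from which the fixed coordinates $\alpha|_j|_{S^c}$ are drawn) and $\pi|_j$ (under which the BLR error is measured); the decomposition $\mu = q\pi + (1-q)\nu$ together with the full-support hypothesis on $\mu$ ensures that the Radon--Nikodym ratio $d\nu|_j / d\pi|_j$ is bounded by a constant, which is what controls any coupling or change-of-measure argument appearing in the reduction.
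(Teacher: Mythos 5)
Your overall plan matches the paper's: fix $\alpha$, apply the DFH agreement theorem (\Cref{thm:DFH-agreement}) to the local data $\phi_S := A(S,\alpha|_{S^c})$ viewed as a function $S \to \{0,1\}$, then integrate over $\alpha \sim \nu^n$. However, you are treating the heart of the argument as if it were automatic, and that is where the gap lies. The DFH theorem as used here is a generic direct-product agreement theorem: given arbitrary local functions $\phi_S\colon S \to \Sigma$, \emph{if} they satisfy the pairwise agreement hypothesis
\[
 \Pr_{(S_1,S_2,T) \sim \mu_{q,r}^n}[\phi_{S_1}|_T \neq \phi_{S_2}|_T] \leq \epsilon,
\]
\emph{then} they are close to restrictions of a global $\psi$. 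It is not a theorem that takes ``$f$ is approximately linear on most slices'' directly as input. You cannot cite it to ``extract'' a global character until you have established the hypothesis, i.e., that for a $1-O(\epsilon)$ fraction of $(S_1,S_2,T) \sim \mu_{q,r}^n$ one has $A(S_1,\alpha|_{S_1^c}) \cap T = A(S_2,\alpha|_{S_2^c}) \cap T$. Your writeup never argues this; this is precisely \Cref{eq:blr1} in the paper and it is the only nontrivial part of the proof.

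Establishing the hypothesis requires an additional idea you have not mentioned: further decompose $\pi = c\nu + (1-c)\lambda$ so that when you split $S$ into $T$ and $S \setminus T$, the coordinates in $S \setminus T$ are $\nu$-distributed (matching how the original $\alpha$ was drawn) and the coordinates in $T$ are $\lambda$-distributed. This lets you re-express the conclusion of \Cref{lem:blr0} as: for $(S_t,\alpha|_{S_t^c})$, the common restriction $f_1|_{T^c \gets \alpha|_{T^c,1}}$ is $O(\epsilon)$-close (under $\lambda|_1^T$) to the character $\chi_{A(S_t,\alpha|_{S_t^c}) \cap T,\,B_t}$, for both $t=1,2$. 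Since two distinct characters $\chi_{A_1,B_1},\chi_{A_2,B_2}$ with $A_1 \neq A_2$ disagree with probability at least $\min(\lambda|_1(0),\lambda|_1(1))$ under any product distribution, it follows that $A(S_1,\cdot) \cap T = A(S_2,\cdot) \cap T$ except with probability $O(\epsilon)$, which is the required hypothesis. Without this chain --- the decomposition $\pi = c\nu + (1-c)\lambda$, the choice $r = (1-c)q$, the reinterpretation of the two slice-restrictions as approximating the same $T$-restricted function, and the separation of distinct characters --- the proof does not go through. Your appeal to bounded Radon--Nikodym ratios between $\nu|_j$ and $\pi|_j$ is in the right spirit but does not substitute for the decomposition, which is what actually allows the two samples $(S_1,\alpha)$ and $(S_2,\alpha)$ to be compared on the common $T$.

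One minor point: you spend effort arguing that $A_j(\alpha)$ is independent of $j$, but this is already built into \Cref{lem:blr0} (a single $A(S,\alpha)$ works for all $j$, with only the constants $b_j(S,\alpha)$ varying), so the paper's proof need only work with $f_1$ throughout.
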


The proof will require an agreement theorem essentially proved in~\cite{DFH2025}. The theorem concerns the distribution $\mu_{q,r}^n$, where $0 < q < r < 1$. This is the distribution on triples $(S_1,S_2,T)$ defined as follows:
\begin{itemize}
    \item Sample $T \sim \mu_r^n$.
    \item Sample $S_1 \supseteq T$ so that $S_1 \sim \mu_q^n$.
    \item Sample $S_2 \supseteq T$ so that $S_2 \sim \mu_q^n$.
\end{itemize}
The sampling in the second and third steps can be performed as follows. If $i \in T$, then we always put $i \in S_1$, and otherwise, we put it in $S_1$ with probability $\frac{q-r}{1-r}$.

\begin{theorem}[Agreement theorem] \label{thm:DFH-agreement}
Let $0 < r < q < 1$. Suppose that for every $S \subseteq [n]$ we have a function $\phi_S\colon S \to \Sigma$, where $\Sigma$ is some finite alphabet. If
\[
 \Pr_{(S_1,S_2,T) \sim \mu_{q,r}^n}[\phi_{S_1}|_T \neq \phi_{S_2}|_T] \leq \epsilon
\]
then there exists $\psi\colon [n] \to \Sigma$ such that
\[
 \Pr_{S \sim \mu_q^n}[\phi_S \neq \psi|_S] = O(\epsilon).
\]
\end{theorem}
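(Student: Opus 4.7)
The plan is to define a candidate global function $\psi\colon [n] \to \Sigma$ by plurality—for each coordinate $i$, take $\psi(i)$ to be the most common value of $\phi_S(i)$ over random $S \sim \mu_q^n$ conditioned on $S \ni i$—and then argue that $\phi_S = \psi|_S$ for all but an $O(\epsilon)$-fraction of $S$. This is the standard template for agreement-testing theorems; what needs to be implemented is its two halves: extracting a \emph{local} agreement statement from the hypothesis, and then \emph{globalizing} it into agreement with $\psi$.

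For the local step, I would exploit the product structure of $\mu_{q,r}^n$ over coordinates. Conditioning on $i \in T$ fixes the behavior at coordinate $i$ and leaves the other $n-1$ coordinates independent with a fixed joint distribution on $(j \in S_1, j \in S_2)$ depending only on $q$ and $r$. This yields a symmetric Markov chain on $\{S \ni i\}$ whose stationary distribution is $\mu_q^n$ restricted to such sets, with strictly positive transition entries on each coordinate factor. \Cref{lem:agreement-product} then gives, for each $i$, an estimate of the form
\[
 \Pr_{S \sim \mu_q^n \mid S \ni i}[\phi_S(i) \neq \psi(i)] \;\leq\; \frac{C}{r}\, \Pr\bigl[i \in T \text{ and } \phi_{S_1}(i) \neq \phi_{S_2}(i)\bigr],
\]
where $C = C(q,r)$ is the constant produced by the lemma.

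Summing this inequality over $i$ with weight $\Pr[i \in S] = q$ and applying a union bound yields
\[
 \Pr_{S \sim \mu_q^n}[\phi_S \neq \psi|_S] \;\leq\; \frac{Cq}{r}\cdot \Ex_{(S_1,S_2,T)}\!\bigl[\,\bigl|\{i \in T : \phi_{S_1}(i) \neq \phi_{S_2}(i)\}\bigr|\,\bigr],
\]
so the remaining task is to bound the expected number of disagreements on $T$ by $O(\epsilon)$. This is the \textbf{main obstacle}: the hypothesis only controls the probability that \emph{any} disagreement occurs, while a direct first-moment bound would give the much weaker estimate $\epsilon \cdot |T|$. Closing this gap is the technical core of the agreement theorem in \cite{DFH2025}; the proof there uses a bootstrap/subsampling argument that compares the $(q,r)$- and $(q,r')$-distributions for various $r' \le r$ and shows that atypically large disagreements would already be visible at smaller overlaps, contradicting the hypothesis applied with altered parameters. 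In our presentation I would invoke this final step as a black box from \cite{DFH2025} rather than reprove it, since the statement is nearly identical to what appears there.
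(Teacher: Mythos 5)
The paper's own ``proof'' of this theorem is simply a citation: it observes that the slice version is Theorem~3.1 of \cite{DFH2025}, and that the product-space version follows by the ``going to infinity'' argument used for their Theorem~5.4. Your proposal instead attempts to reconstruct an actual argument, so you are taking a genuinely different route.

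Your scaffold is correct as far as it goes: defining $\psi$ by plurality, conditioning on $i \in T$ to obtain a product Markov chain on subsets containing $i$ with bounded spectral gap, applying \Cref{lem:agreement-product} coordinate-by-coordinate, and then union-bounding to arrive at
\[
 \Pr_{S \sim \mu_q^n}[\phi_S \neq \psi|_S] \;\leq\; \frac{Cq}{r}\cdot \Ex_{(S_1,S_2,T)}\!\Bigl[\,\bigl|\{i \in T : \phi_{S_1}(i) \neq \phi_{S_2}(i)\}\bigr|\,\Bigr],
\]
is all fine, and you correctly identify the real obstruction: the hypothesis only bounds $\Pr[\exists i \in T : \phi_{S_1}(i) \neq \phi_{S_2}(i)]$, whereas the right-hand side needs the \emph{expected number} of disagreements on $T$, and in general these differ by a factor as large as $|T| \approx rn$. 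This gap is indeed the technical heart of any agreement theorem. However, the resolution you propose---``invoke this final step as a black box from \cite{DFH2025}''---does not reduce anything, because that paper does not isolate an ``expected number of disagreements is $O(\epsilon)$'' lemma as a citable intermediate step; they prove the agreement theorem wholesale. Once you are citing \cite{DFH2025} for the hard step you are citing the whole theorem, which is exactly what the paper does, only without the preliminary scaffold. You are also missing the one piece of actual content in the paper's proof: \cite{DFH2025} Theorem~3.1 is stated on the \emph{slice}, and some work (the ``going to infinity'' limiting argument from their Theorem~5.4) is needed to pass from the slice to the $\mu_q^n$ product distribution appearing here. If you want a self-contained proof rather than a citation, your first-moment scaffold alone will not close the gap; you would need an actual concentration or bootstrap argument to convert the probability-of-any-disagreement hypothesis into a first-moment bound, and that is the substantive work.
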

\begin{proof}
The slice version of this result is \cite[Theorem 3.1]{DFH2025}. This version can be proved using the ``going to infinity'' argument which is used to prove \cite[Theorem 5.4]{DFH2025}.
\end{proof}

We can now prove the lemma.

\begin{proof}[Proof of \Cref{lem:blr1}]
We would like to get into the setting of \Cref{thm:DFH-agreement}, for $q := q$, an appropriate $r$, and $\phi_S := A(S,\alpha|_S)$, for various values of $\alpha$.

For small enough $c > 0$ we can find a distribution $\lambda$ such that
\[
 \pi = c \nu + (1-c) \lambda.
\]
We take $r = (1-c)q$. We will show that
\begin{equation} \label{eq:blr1}
 \Ex_{\alpha \sim \nu^n} \Pr_{(S_1,S_2,T) \sim \mu_{q,r}^n}[A(S_1,\alpha|_{S_1^c}) \cap T \neq A(S_2,\alpha|_{S_2^c}) \cap T] = O(\epsilon).
\end{equation}
Applying \Cref{thm:DFH-agreement} for each $\alpha$ separately then immediately implies the lemma.

In order to show that \Cref{eq:blr1} holds, we use the fact that $f_1|_{S_1 \gets \alpha|_{S_1}}$ is close to $\chi_{A(S_1,\alpha|_{S_1})}$ (up to negation), and similarly $f_1|_{S_2 \gets \alpha|_{S_2}}$ is close to $\chi_{A(S_2,\alpha|_{S_2})}$ (up to negation). Fixing the elements outside of $T$, we obtain that the same function is close to both $\chi_{A(S_1,\alpha|_{S_1}) \cap T}$ and $\chi_{A(S_2,\alpha|_{S_2}) \cap T}$ (up to negation), which can only happen if $A(S_1,\alpha|_{S_1}) \cap T = A(S_2,\alpha|_{S_2}) \cap T$ (since different characters are far from each other).

The first step in this plan is to massage the conclusion of \Cref{lem:blr0} using the equation for $\pi$:
\[
 \Ex_{\substack{(S, \alpha') \sim (\mu_q^n, \nu^{S^c})\\ (T, \alpha'') \sim (\mu_{1-c}^S, \nu^{S \setminus T})}} \min_{B \in \{0,1\}} \Pr_{\lambda|_1^T}[f_1|_{S^c \gets \alpha'|_1, S \setminus T \gets \alpha''|_1} \neq \chi_{A(S,\alpha') \cap T, B}] = O(\epsilon).
\]
(We could determine $B$ explicitly, but this is not necessary.)

An equivalent way to sample $(S,\alpha'),(T,\alpha'')$ is to first sample $S,T$, then sample $\alpha \sim \nu^n$, and take $\alpha' = \alpha|_{S^c}$ and $\alpha'' = \alpha|_{S \setminus T}$. In particular, for $t \in \{1,2\}$,
\[
 \Ex_{\alpha \sim \nu^n} \Ex_{(S_1,S_2,T) \sim \mu_{q,(1-c)q}^n} \min_{B_t \in \{0,1\}} \Pr_{\lambda|_1^T} [f_1|_{T^c \gets \alpha|_{T^c,1}} \neq \chi_{A(S_t,\alpha|_{S_t^c}) \cap T, B_t}] = O(\epsilon).
\]
Combining this for $t = 1$ and $t = 2$ gives
\[
 \Ex_{\alpha \sim \nu^n} \Ex_{(S_1,S_2,T) \sim \mu_{q,(1-c)q}^n} \min_{B_1,B_2 \in \{0,1\}} \Pr_{\lambda|_1^T} [\chi_{A(S_1,\alpha|_{S_1^c}) \cap T, B_1} \neq \chi_{A(S_2,\alpha|_{S_2^c}) \cap T, B_2}] = O(\epsilon).
\]

At this point we use the fact that different characters are far from each other: if $A_1 \neq A_2$ then for all $B_1,B_2$,
\[
 \Pr_{\lambda|_1^T}[\chi_{A_1,B_1} \neq \chi_{A_2,B_2}] \geq \min(\lambda|_1(0), \lambda|_1(1)).
\]
Since $\lambda$ is fixed, this immediately implies \Cref{eq:blr1}, completing the proof.
\end{proof}

\subsubsection{Step 2}

The second step constitutes the following lemma.

\begin{lemma}[Agreement over $\alpha$] \label{lem:blr2}
There exists $A \subseteq [n]$ such that
\[
 \Pr_{\alpha \sim \nu^n}[A(\alpha) \neq A] = O(\epsilon).
\]
\end{lemma}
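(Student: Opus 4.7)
The plan is to apply \Cref{lem:agreement-product} (used in the slightly more general form allowing reversible product chains with full-support stationary distributions in place of bistochastic ones; the spectral argument from the proof of \Cref{lem:agreement-lambda} carries through unchanged, since only the spectral gap and positivity of the entries enter) to the map $A\colon P_{m,b}^n \to 2^{[n]}$. Take the coordinate-wise chain $M$ on $P_{m,b}$ defined by
\[
 M(w,w') = (1-q)\delta_{w,w'} + q\nu(w'),
\]
which is reversible with respect to $\nu$, has spectral gap $q$, and has strictly positive entries (shrinking $q$ slightly, if necessary, so that $\nu$ has full support). The product chain $M^n$ on $P_{m,b}^n$ has stationary distribution $\nu^n$ and the same spectral gap $q$. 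By the agreement lemma, it suffices to establish
\[
 \Pr_{(\alpha,\alpha') \sim M^n}[A(\alpha) \neq A(\alpha')] = O(\epsilon),
\]
and then take $A := \arg\max_{B} \Pr_{\alpha \sim \nu^n}[A(\alpha) = B]$.

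To verify the hypothesis, unpack the coupling: a step of $M^n$ produces a pair $(\alpha,\alpha')$ together with a random refresh set $R \sim \mu_q^n$ such that $\alpha|_{R^c} = \alpha'|_{R^c}$ and $\alpha|_R, \alpha'|_R$ are independent samples from $\nu^R$. Now apply \Cref{lem:blr1} to both $\alpha$ and $\alpha'$ with $S := R$. Since $\alpha|_{R^c} = \alpha'|_{R^c}$ forces $A(R,\alpha|_{R^c}) = A(R,\alpha'|_{R^c})$, we obtain
\[
 A(\alpha) \cap R \;=\; A(R,\alpha|_{R^c}) \;=\; A(R,\alpha'|_{R^c}) \;=\; A(\alpha') \cap R
\]
with probability $1 - O(\epsilon)$ over the coupling. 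Thus $A(\alpha)$ and $A(\alpha')$ agree on the (random) refresh set $R$.

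The main obstacle, and the real content of the argument, is promoting this partial agreement on $R$ to full agreement on $[n]$. I would handle this by either iterating the chain $M^n$ for a constant number of steps (so that the union of the refresh sets covers $[n]$ with overwhelming probability) and telescoping the $O(\epsilon)$ error across steps, or by applying \Cref{thm:DFH-agreement} a second time to stitch together the partial agreements across independently sampled $S$'s into a single consistent global subset. The delicate point is to ensure the error does not blow up with $n$ while passing from ``agrees on random $R$'' to ``agrees everywhere''; this requires using the chain structure (which gives us independence at the coordinate level) rather than a naive coordinate-wise union bound. Once this is done, the agreement lemma converts the $O(\epsilon)$ bound on $\Pr_{M^n}[A(\alpha)\neq A(\alpha')]$ into the desired $O(\epsilon)$ bound on $\Pr_{\nu^n}[A(\alpha)\neq A]$.
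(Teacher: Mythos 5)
Your setup is on the right track — you correctly use Lemma~\ref{lem:blr1} to get agreement on the refresh set $R$, and you correctly reach for the product-chain agreement lemma (Lemma~\ref{lem:agreement-product}, suitably generalized to reversible chains) — but you have not verified the hypothesis of that lemma, and the two fixes you sketch for the gap do not work as stated.

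The hypothesis you need for Lemma~\ref{lem:agreement-product} is $\Pr_{(\alpha,\alpha')\sim M^n}[A(\alpha)\neq A(\alpha')]=O(\epsilon)$, i.e.\ agreement of $A$ on \emph{all} of $[n]$. What you actually get from Lemma~\ref{lem:blr1} is only $A(\alpha)\cap R = A(\alpha')\cap R$; on $R^c$ the sets $A(\alpha)$ and $A(\alpha')$ can differ arbitrarily, because $A(\alpha)$ depends on all coordinates of $\alpha$, not just on $\alpha|_{R^c}$. Iterating the chain does not close this gap, because the partial agreements do not telescope: after $k$ steps you know $A(\alpha^{(t-1)})\cap R_t = A(\alpha^{(t)})\cap R_t$ for each $t$, and to deduce $A(\alpha^{(0)})_i = A(\alpha^{(k)})_i$ you would need $i\in R_t$ for \emph{every} $t$, an event of probability $q^k$ — iterating makes things worse, not better. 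A second application of Theorem~\ref{thm:DFH-agreement} is also not straightforward, since the objects to be glued, $A(S,\alpha|_{S^c})$, depend on $\alpha$ and there is no natural family $\phi_S\colon S\to\Sigma$ indexed by $S$ alone.

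The missing idea is to apply the agreement lemma not to $A(\cdot)$ but to $A(\cdot)\cap T$ for a \emph{sub}-sampled $T\subseteq S$. After you subsample $T\sim\mu_{1/2}(S)$ and marginalize over $S$, the conditional chain on $(\alpha,\alpha')$ given $T$ — denoted $\nu'(T)$ in the paper — resamples every coordinate with positive probability (those in $T$ always, the rest with probability $\frac{q/2}{1-q/2}$), so it is ergodic and the agreement lemma applies to the $\Sigma=2^T$-valued map $\alpha\mapsto A(\alpha)\cap T$. This yields a consensus set $A'(T)\subseteq T$ with $\Ex_T\Pr_\alpha[A(\alpha)\cap T\neq A'(T)]=O(\epsilon)$. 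Finally, one needs a separate gluing step: the paper partitions $[n]$ into $T_1,\dots,T_{2/q}$ via a random coloring so that each part is distributed as $\mu_{q/2}^n$, and defines $A$ by $A\cap T_t := A'(T_t)$. Your proposal contains neither the subsampling trick (which is what makes the conditional chain ergodic and so renders the agreement lemma applicable) nor the partition gluing, and both are essential.
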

\begin{proof}
The proof relies on the fact that while $A(\alpha)$ is a function of all of $\alpha$, $A(S,\alpha|_{S^c})$ only depends on part of $\alpha$.
In particular, suppose that we sample two copies of $\nu^n$ by sampling $\alpha \sim \nu^n$ and then obtaining $\alpha'$ by only resampling the coordinates in $S$. Denote this distribution by $\nu(S)$. Since $\alpha|_{S^c} = \alpha'|_{S^c}$, \Cref{lem:blr1} shows that
\[
 \Pr_{(S,\alpha,\alpha') \sim (\mu_q^n, \nu(S))}[A(\alpha) \cap S \neq A(\alpha') \cap S] = O(\epsilon).
\]

In order to proceed, we introduce a conceit:
\[
 \Pr_{\substack{(S,\alpha,\alpha') \sim (\mu_q^n, \nu(S)) \\ T \sim \mu_{1/2}(S)}}[A(\alpha) \cap T \neq A(\alpha') \cap T] \leq
\Pr_{(S,\alpha,\alpha') \sim (\mu_q^n, \nu(S))}[A(\alpha) \cap S \neq A(\alpha') \cap S] = O(\epsilon).
\]
If we marginalize over $S$, the distribution of $(\alpha,\alpha')$, which we denote $\nu'(T)$, becomes the following. First, sample $\alpha \sim \nu^n$. Then, sample $S \supseteq T$ by including any $i \notin T$ with probability $\frac{q/2}{1-q/2}$. Finally, resample all coordinates in $S$. We deduce
\[
 \Pr_{(T,\alpha,\alpha') \sim (\mu_{q/2}^n, \nu'(T))}[A(\alpha) \cap T \neq A(\alpha') \cap T] = O(\epsilon).
\]

In contrast to the distribution $\nu(S)$, which only mixes the coordinates in $S$, the distribution $\nu'(T)$ mixes all coordinates. Moreover, there are only two types of coordinates: those in $T$ (which always get resampled) and the rest (which get resampled with probability $\frac{q/2}{1-q/2}$). This allows us to apply \Cref{lem:agreement-product}, obtaining $A'(T) \subseteq T$ satisfying
\[
 \Ex_{T \sim \mu_{q/2}^n} \Pr_{\alpha \sim \nu^n}[A(\alpha) \cap T \neq A'(T)] = O(\epsilon).
\]

Now it's time for the final trick. Recall that $1/q$ is an integer. Choose $c \colon [n] \to [2/q]$ uniformly at random, and for $t \in [2/q]$, let $T_t(c) = c^{-1}(t)$. Since $T_t \sim \mu_{q/2}^n$ for each $t$, we have
\[
 \Ex_c \Pr_{\alpha \sim \nu^n}[A(\alpha) \cap T_t(c) \neq A'(T_t(c)) \text{ for some }t] = O(\epsilon).
\]

We can find $c$ such that the probability above is $O(\epsilon)$. Define $A$ via $A \cap T_t(c) = A'(T_t(c))$; this makes sense since $T_1(c),\dots,T_{2/q}(c)$ partition $[n]$. The lemma immediately follows.
\end{proof}

\subsubsection{Step 3}

In the final step, we complete the proof of \Cref{thm:intro-blr}.

\begin{proof}[Proof of \Cref{thm:intro-blr}]
The main idea is to ``factor out'' the set $A$ found in \Cref{lem:blr2}. Accordingly, we define functions $h_1,\dots,h_m\colon \{0,1\}^n \to \{0,1\}$ by
\[
 h_j = f_j \oplus \chi_A.
\]
We will complete the proof by showing that $h_j$ is $O(\epsilon)$-close to a constant.

As our starting point, we combine \Cref{lem:blr0,lem:blr1,lem:blr2} to obtain
\[
 \Ex_{(S,\alpha) \sim (\mu_q^n, \nu^{S^c})} \Pr_{\pi|_j^S}[f_j|_{S^c \gets \alpha|_j} \neq \chi_{A \cap S,b_j(S,\alpha)}] = O(\epsilon).
\]
Plugging in $h_j$, we obtain
\[
 \Ex_{(S,\alpha) \sim (\mu_q^n, \nu^{S^c})} \Pr_{\pi|_j^S}[h_j|_{S^c \gets \alpha|_j} \neq \chi_{A \cap S^c,b_j(S,\alpha)}(\alpha|_{S^c,j})] = O(\epsilon).
\]
The expression $\chi_{A \cap S^c,b_j(S,\alpha)}(\alpha|_{S^c,j})$ doesn't depend on the coordinates in $S$, and so
\[
 \Ex_{(S,\alpha) \sim (\mu_q^n, \nu^{S^c})} \Pr_{\beta,\beta' \sim \pi|_j^S}[h_j|_{S^c \gets \alpha|_j}(\beta) \neq h_j|_{S^c \gets \alpha|_j}(\beta')] = O(\epsilon).
\]

Let $\gamma \in P_{m,b}^n$ be the vector defined by $\gamma|_{S^c} = \alpha$ and $\gamma|_S = \beta$, and let $\gamma' \in P_{m,b}^n$ be defined similarly with $\gamma'|_S = \beta'$. The inputs to $h_j$ in the formula above are thus $\gamma|_j$ and $\gamma'|_j$.

We can sample $\gamma,\gamma'$ as follows. First, sample $\gamma \in \mu^n$. Next, sample $S$ given $\gamma$; the (non-zero) probability that $i \in S$ depends only on $\gamma_i$, and can be calculated using Bayes' law. Finally, $\gamma'$ is obtained by resampling the coordinates in $S$ according to $\pi$.
This means that the distribution of $(\gamma,\gamma')$ corresponds to some product chain in the sense of \Cref{lem:agreement-product}, with a single type of factor. The same holds for $(\gamma|_j,\gamma'|_j)$, and so \Cref{lem:agreement-product} implies that there exists $b_j \in \{0,1\}$ such that
\[
 \Pr_{\mu^n}[h_j \neq b_j] = O(\epsilon).
\]

Finally, we take $g_j = \chi_{A,b_j}$. Observe that $\Pr_{\mu|_j^n}[g_j \neq f_j] = O(\epsilon)$, and so
\[
 \Pr_{\mu^n}[(g_1,\dots,g_m) \notin P_{m,b}] = O(\epsilon).
\]
Whether $(g_1,\dots,g_m) \in P_{m,b}$ or not depends only on $b_1,\dots,b_m$ (the dependence on the input cancels out). We can assume without loss of generality that $\epsilon$ is small enough to make the right-hand side smaller than~$1$. This implies that $(g_1,\dots,g_m)$ is a generalized polymorphism of $P_{m,b}$, completing the proof.
\end{proof}

\section{Intersecting families} \label{sec:intersecting}

In this section we provide an alternative proof \Cref{thm:intro-friedgut-regev}, originally due to Friedgut and Regev~\cite{FR18} (who improved earlier results of Dinur, Friedgut, and Regev~\cite{DFR08,DR09}).

\thmintrofriedgutregev*

Instead of working directly with functions on $\binom{[n]}{pn}$, Friedgut and Regev first reduce \Cref{thm:intro-friedgut-regev} to the following statement.

\begin{theorem}[Fractional Friedgut--Regev] \label{thm:friedgut-regev-fractional}
Fix $p \in (0,1/2)$. Let $P_{\NAND} = \{ (0,0),(1,0),(0,1) \}$, and let $\mu$ be the following distribution: $\mu(0,0) = 1-2p$, $\mu(1,0) = \mu(0,1) = p$.
For every $\epsilon > 0$ there exist $\delta,M > 0$ such that the following holds.

Suppose that $f_1,f_2\colon \{0,1\}^n \to [0,1]$ satisfy
\[
 \Ex_{(x^{(1)},x^{(2)}) \sim \mu^n}[f_1(x^{(1)}) f_2(x^{(2)})] \leq \delta.
\]
Then there exist functions $g_1,g_2\colon \{0,1\}^n \to \{0,1\}$, depending on at most $M$ coordinates, such that $(g_1,g_2)$ is a generalized polymorphism of $P_{\NAND}$, and moreover for $j \in \{1,2\}$,
\[
 \Ex_{x^{(j)} \sim \mu|_j^n}[(1-g_j(x^{(j)})) f_j(x^{(j)})] \leq \epsilon.
\]

Furthermore, if $f_1 = f_2$ then $g_1 = g_2$.
\end{theorem}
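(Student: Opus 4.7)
The plan is to mirror the proof of \Cref{thm:intro-monotone} in the fractional setting, replacing $\{0,1\}$-valued functions by $[0,1]$-valued ones throughout. There are two preparatory ingredients. First, a fractional analogue of \Cref{lem:monotone-counting-NAND}: if $\phi_1, \phi_2\colon \{0,1\}^n \to [0,1]$ are $(d,\tau)$-regular with respect to $\mu|_1 = \mu|_2 = \mu_p$ and satisfy $\Ex_{\mu|_j}[\phi_j] \geq \epsilon/2$, then $\Ex_{\mu^n}[\phi_1(y^{(1)}) \phi_2(y^{(2)})] \geq \gamma$ for some $\gamma = \Theta(\epsilon^{\Theta(1)})$. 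Its proof copies \Cref{lem:monotone-counting-NAND} verbatim: decompose each coordinate of $\mu$ with a small free-variable probability $q > 0$ to obtain a restriction $\rho$ starring either the first or the second position, apply \Cref{thm:it-aint-over} in its $[0,1]$-valued form (the Fourier-analytic proof of \cite{MOO10} goes through for $[0,1]$-valued functions without change) to each $\phi_j$, and multiply the resulting lower bounds, using the fact that the two marginal resamplings decouple after conditioning on $\rho$. Second, I would use the $[0,1]$-valued tree form of Jones' regularity lemma — the statement of \Cref{thm:jones-tree} extended to $[0,1]$-valued functions — which is proved in \Cref{sec:regularity} by the same potential-function argument.

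With these ingredients, I would apply the fractional Jones' lemma to $f_1, f_2$ with error parameter $\epsilon/4$, $p_j = p$, and the $(d, \tau)$ produced by the fractional counting lemma, obtaining a decision tree $T$ of depth $M = O(md/\epsilon\tau) = O(1/\epsilon^C)$ such that for $j \in \{1,2\}$,
\[
 \Pr_{\rho \sim T}[f_j|_\rho \text{ is $(d,\tau)$-regular with respect to $\mu_p$}] \geq 1 - \epsilon/4.
\]
At each leaf $\rho$ of $T$, I define
\[
 g_j|_\rho \equiv \begin{cases} 0 & \text{if } f_j|_\rho \text{ is not $(d,\tau)$-regular or } \Ex_{\mu_p}[f_j|_\rho] \leq \epsilon/2, \\ 1 & \text{otherwise}. \end{cases}
\]
This makes each $g_j$ a $\{0,1\}$-valued decision tree of depth at most $M$. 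Since the tree $T$ and the labelling rule are identical for $j = 1$ and $j = 2$ (in particular $\mu|_1 = \mu|_2$), we automatically obtain $g_1 = g_2$ whenever $f_1 = f_2$. The bound $\Ex_{\mu|_j}[(1 - g_j) f_j] \leq \epsilon$ follows by splitting on the reason for $g_j|_\rho = 0$: irregular leaves contribute at most $\epsilon/4$ (since $f_j \leq 1$) and low-expectation leaves contribute at most $\epsilon/2$.

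It remains to verify that $(g_1, g_2)$ is a generalized polymorphism of $P_{\NAND}$ for small enough $\delta$. Suppose otherwise; then there exist $x^{(1)}, x^{(2)} \in P_{\NAND}^J$ (where $J$ is the set of variables queried along some path of $T$) such that $g_1|_{J \gets x^{(1)}} \equiv g_2|_{J \gets x^{(2)}} \equiv 1$, so both $f_j|_{J \gets x^{(j)}}$ are $(d,\tau)$-regular with expectation at least $\epsilon/2$. The fractional counting lemma then yields
\[
 \Ex_{(y^{(1)}, y^{(2)}) \sim \mu^{J^c}}\bigl[f_1|_{J \gets x^{(1)}}(y^{(1)}) \, f_2|_{J \gets x^{(2)}}(y^{(2)})\bigr] \geq \gamma,
\]
and since this particular pair $(x^{(1)}, x^{(2)})$ has $\mu^J$-probability at least $\min(\mu)^M$, we conclude $\Ex_{\mu^n}[f_1 f_2] \geq \min(\mu)^M \gamma = 1/\exp\Theta(1/\epsilon^C)$. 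Choosing $\delta$ strictly smaller contradicts the hypothesis and completes the proof.

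The main obstacle is quantitative rather than conceptual: one must track parameters through the $[0,1]$-valued versions of It Ain't Over Till It's Over and Jones' regularity lemma to land at the claimed polynomial depth $M = O(1/\epsilon^C)$ and exponential error $\delta = 1/\exp\Theta(1/\epsilon^C)$. Qualitatively, nothing new happens beyond the monotone $\{0,1\}$-valued case; quantitatively, one must verify that replacing the $\{0,1\}$ range by $[0,1]$ in the Fourier-analytic inputs does not degrade the relevant bounds, a check that is essentially already carried out in \Cref{sec:regularity} for Jones' lemma.
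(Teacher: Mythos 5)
Your proposal follows the paper's proof essentially line for line: the same fractional extensions of It Ain't Over Till It's Over, Jones' regularity lemma (decision-tree version), and the counting lemma for NAND; the same definition of $g_j$ by labeling each leaf $\rho$ of $T$ with $0$ or $1$ depending on whether $f_j|_\rho$ is regular with expectation above $\epsilon/2$; the same union bound for $\Ex_{\mu|_j}[(1-g_j)f_j] \le \epsilon$; and the same counting-lemma contradiction to show $(g_1,g_2)$ is a polymorphism, with $\delta = \min(\mu)^{O(M)}\gamma$. The only deviations are cosmetic (you feed $\epsilon/4$ into Jones where the paper uses $\epsilon/2$, and you explicitly spell out the ``furthermore'' clause, which the paper leaves implicit), so this is the same proof.
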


Notice that in this statement, the functions $f_1,f_2$ are not necessarily Boolean, but they take values in the interval $[0,1]$.

Let us briefly explain how \Cref{thm:intro-friedgut-regev} follows from \Cref{thm:friedgut-regev-fractional}.
Given $\mathcal{F}$, which we identify with the corresponding Boolean function, we define
\[
 f_1(x) = f_2(x) =
 \begin{cases}
     \Ex_{\substack{x' \leq x \\ |x'| = k}}[\mathcal{F}(x')] & \text{if } |x| \geq k, \\
     0 & \text{otherwise}.
 \end{cases}
\]
\cite[Lemma 7.3]{FR18} shows that the assumption of \Cref{thm:intro-friedgut-regev} implies that of \Cref{thm:friedgut-regev-fractional}. We apply \Cref{thm:friedgut-regev-fractional}, and convert $g_1 = g_2$ to a family $\mathcal{G}$ in the natural way. \cite[Lemma 7.4]{FR18} shows that the conclusion of \Cref{thm:friedgut-regev-fractional} implies the conclusion of \Cref{thm:intro-friedgut-regev}.

The proof of \Cref{thm:friedgut-regev-fractional} is very similar to the proof of \Cref{thm:intro-monotone}. First, we need versions of \Cref{thm:it-aint-over} (It Ain't Over Till It's Over) and \Cref{thm:jones-was-tree} (Jones' regularity lemma) for functions taking values in $[0,1]$.

The original proof of \Cref{thm:it-aint-over}~\cite{MOO10} was in fact formulated for functions taking values in $[0,1]$. (Note that our definition of regularity makes sense for arbitrary real-valued functions.)

\begin{theorem}[Fractional It Ain't Over Till It's Over] \label{thm:it-aint-over-fractional}
\Cref{thm:it-aint-over} holds for functions $f\colon \{0,1\}^n \to [0,1]$ (with the same parameters).
\end{theorem}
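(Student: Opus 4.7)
The plan is to observe that the proof of \Cref{thm:it-aint-over} in \cite{MOO10} never uses Booleanity of $f$ beyond the pointwise bound $0 \le f \le 1$. Recall the structure of the MOO argument: decompose $f = f^{\le d} + f^{>d}$; use hypercontractivity to bound the $L^2$-contribution of $f^{>d}$ after applying the noise/restriction operator; and apply the invariance principle to the low-degree, low-max-influence polynomial $f^{\le d}$ to compare its distribution under a random restriction with its distribution when the free coordinates are replaced by matched Gaussian variables. Anti-concentration of low-degree, low-influence polynomials under Gaussian inputs then rules out any large point mass of $\Ex_{\mu_p}[f|_\rho]$ near zero. Each of these ingredients depends only on the Fourier expansion of $f$ together with the uniform bound $|f| \le 1$, and never on $f$ being $\{0,1\}$-valued, so the same proof with the same constants $C, d, \tau, \delta$ yields the statement for $f\colon \{0,1\}^n \to [0,1]$.

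If a black-box reduction to the Boolean version is preferred, the following would work. Adjoin auxiliary coordinates $y \in \{0,1\}^N$ sampled according to $\mu_{1/2}$, let $r(y) \in [0,1]$ denote the binary fraction encoded by $y$, and define the Boolean function $F(x, y) = \mathbf{1}[r(y) \le f(x)]$. Then $\Ex[F] = \Ex[f]$, and for any restriction $\rho$ of the $x$-coordinates, $\Ex_y \Ex_{\mu_p}[F|_\rho] = \Ex_{\mu_p}[f|_\rho]$. A direct Fourier computation shows that the low-degree $x$-influences of $F$ converge to those of $f$ as $N \to \infty$, so for $N$ large enough $F$ is $(d, 2\tau)$-regular on its $x$-coordinates (and the $y$-coordinates, being symmetric and plentiful, can be absorbed into the ``free'' side of the restriction). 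Applying the Boolean version of \Cref{thm:it-aint-over} to $F$, leaving the $y$-coordinates entirely free, and then averaging over $y$ recovers the fractional conclusion for $f$.

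The main obstacle in the reduction route is quantifying the discretization error: one must show that the $N$ auxiliary bits do not inflate the low-degree $x$-influences of $F$ beyond $\tau$ and that the loss $\delta \to \delta - o(1)$ incurred by replacing ``$\ge \delta$'' in the Boolean conclusion by its fractional counterpart is negligible. Since the direct appeal to the MOO proof sidesteps these bookkeeping issues entirely, we would present the theorem by that route, with a single sentence pointing to the bounded-function formulation of the main result of \cite{MOO10}.
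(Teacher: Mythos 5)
Your primary route---observing that the proof of \Cref{thm:it-aint-over} in \cite{MOO10} is already formulated for $[0,1]$-valued functions and depends only on the pointwise bound $0\le f\le 1$, not on Booleanity---is exactly the argument the paper gives, which consists of a single sentence citing \cite{MOO10}. The auxiliary-coordinate reduction in your second paragraph is unnecessary, and you correctly conclude that the direct citation is the cleaner route.
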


A simple modification of the proof of Jones' regularity lemma, which we present in \Cref{sec:regularity}, extends it to the same setting.

\begin{theorem}[Fractional Jones' regularity lemma] \label{thm:jones-was-tree-fractional}
\Cref{thm:jones-was-tree} holds for functions $f\colon \{0,1\}^n \to [0,1]$ (with the same parameters).
\end{theorem}

Upon inspection, the proof of \Cref{lem:monotone-counting-NAND} (Counting lemma for NAND) translates to the following statement.

\begin{lemma}[Fractional counting lemma for NAND] \label{lem:monotone-counting-NAND-fractional}
Fix $p \in (0, 1/2)$, and let $P_{\NAND},\mu$ be as in \Cref{thm:friedgut-regev-fractional}.
For every $\epsilon > 0$ there exists a constant $C = C(P,\mu)$ such that the following holds for $d = \Theta(\log(1/\epsilon))$, $\tau = \Theta(\epsilon^C)$, and $\gamma = \Theta(\epsilon^{2C})$.

Let $\phi_1,\dots,\phi_m\colon \{0,1\}^n \to [0,1]$ be functions such that $\phi_j$ is $(d,\tau)$-regular with respect to $\mu|_j$ and $\Ex_{\mu|_j}[\phi_j] \geq \epsilon$. Then
\[
 \Ex_{(y^{(1)},\dots,y^{(2)}) \sim \mu^n}[\phi_1(y^{(1)}) \phi_2(y^{(2)})] \geq \gamma.
\]
\end{lemma}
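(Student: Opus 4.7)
The plan is to mimic the proof of the Boolean counting lemma (Lemma \ref{lem:monotone-counting-NAND}) almost verbatim, substituting the fractional version of It Ain't Over Till It's Over (\Cref{thm:it-aint-over-fractional}) and replacing the ``$\Pr[\phi_j = 1]$'' statements with expectations. The geometry of the two-step sampling process does all the work; nothing about Booleanity was essential.

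Specifically, let $p = \Pr[\mu|_1 = 1] = \Pr[\mu|_2 = 1]$ and fix a small parameter $q > 0$, chosen so that all the probabilities in the following product distribution are nonnegative. Sample a restriction $\rho \in (\{0,1,*\}^2)^n$ coordinatewise, assigning each $\rho_i$ the value $(0,0)$ with probability $(1-2p) - 2(1-p)q$, the value $(1,0)$ or $(0,1)$ each with probability $p - pq$, and the ``partial'' values $(*,0)$ and $(0,*)$ each with probability $q$. Exactly as in the proof of \Cref{lem:monotone-counting-NAND}, sampling a star coordinate according to the corresponding marginal recovers $\mu$, and the distribution of $\rho|_j$ conditioned on $\rho|_j \neq *$ equals $\mu|_j$. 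The crucial structural feature is that the set of coordinates where the first component of $\rho$ is starred is \emph{disjoint} from the set where the second component is starred, so that conditional on $\rho$, the samples $y^{(1)}$ and $y^{(2)}$ are independent.

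Next, apply \Cref{thm:it-aint-over-fractional} to each $\phi_j$ with parameters $p := p$, $q := q$, $\epsilon := \epsilon$, obtaining $d_j = \Theta(\log(1/\epsilon))$, $\tau_j = \Theta(\epsilon^C)$, and $\delta_j = \Theta(\epsilon^C)$ such that
\[
 \Pr_\rho\bigl[\Ex_{\mu|_j}[\phi_j|_{\rho|_j}] \geq \delta_j\bigr] \geq 1 - \epsilon.
\]
Setting $d = \max(d_1,d_2)$ and $\tau = \min(\tau_1,\tau_2)$, the union bound gives that with probability at least $1 - 2\epsilon$ over $\rho$, \emph{both} restricted expectations are at least $\delta_j$. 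By conditional independence of $y^{(1)}$ and $y^{(2)}$ given $\rho$,
\[
 \Ex_{(y^{(1)},y^{(2)}) \sim \mu^n}[\phi_1(y^{(1)})\phi_2(y^{(2)})]
 = \Ex_\rho\bigl[\Ex_{\mu|_1}[\phi_1|_{\rho|_1}] \cdot \Ex_{\mu|_2}[\phi_2|_{\rho|_2}]\bigr]
 \geq (1 - 2\epsilon)\delta_1 \delta_2,
\]
and we take $\gamma := (1 - 2\epsilon)\delta_1 \delta_2 = \Theta(\epsilon^{2C})$.

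I do not anticipate a genuine obstacle: the only place where the Boolean argument invoked $\{0,1\}$-valuedness was in writing the joint probability as a product, and for $[0,1]$-valued functions the same factorization holds at the level of expectations thanks to conditional independence. The fractional IAOTIO supplies the sole technical ingredient whose Boolean version was used in the original proof, and its parameters are identical, so the dependence of $\gamma$ on $\epsilon$ carries over unchanged.
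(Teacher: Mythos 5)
Your proposal is correct and follows essentially the same route as the paper's proof sketch: construct the same two-step restriction distribution $\nu$ with at most one starred coordinate per position, apply the fractional version of It Ain't Over Till It's Over to each $\phi_j$, and use conditional independence of $y^{(1)},y^{(2)}$ given $\rho$ (since their free coordinate sets are disjoint) to factor the expectation. Your constant $(1-2\epsilon)$ versus the paper's $(1-\epsilon)$ is a trivial bookkeeping difference in the union bound; both yield $\gamma = \Theta(\epsilon^{2C})$.
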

\begin{proof}[Proof sketch]
In the proof of \Cref{lem:monotone-counting-NAND} we define a random restriction $\rho \sim \nu^n$, where $\nu$ is supported on inputs in $\{0,1,*\}^m$ with at most one free coordinate. We show that for an appropriate choice of $d = \Theta(\log(1/\epsilon))$ and $\tau,\delta_1,\delta_2 = \Theta(1/\epsilon^C)$,
\[
 \Pr_{\rho}[\Ex_{\mu|_j}[\phi_j|_{\rho|_j}] \geq \delta_j] \geq 1 - \epsilon \text{ for } j \in \{1,2\}.
\]
If we sample $(y^{(1)},y^{(2)})|\rho$ then $\phi_1(y^{(1)}),\phi_2(y^{(2)})$ are independent since they depend on disjoint sets of coordinates. Therefore
\[
 \Ex_{(y^{(1)},\dots,y^{(2)}) \sim \mu^n}[\phi_1(y^{(1)}) \phi_2(y^{(2)})] \geq (1-\epsilon)\delta_1\delta_2. \qedhere
\]
\end{proof}

We can now prove \Cref{thm:friedgut-regev-fractional}, closely following the proof of \Cref{thm:intro-monotone}.

\begin{proof}[Proof of \Cref{thm:friedgut-regev-fractional}]
Apply \Cref{lem:monotone-counting-NAND-fractional} with $\epsilon := \epsilon/2$ to obtain $d,\tau,\gamma$.

We apply \Cref{thm:jones-was-tree-fractional} with $\epsilon := \epsilon/2$ and the parameters $d,\tau$ to $f_1,f_2$ to obtain a set $J$ of size $M$. 
We define the functions $g_1,g_2\colon \{0,1\}^n \to \{0,1\}$ as follows. For each $x \in \{0,1\}^J$,
 \[
  g_j|_{J \gets x} = \begin{cases}
      0 & \text{if $f_j|_{J \gets x}$ is not $(d,\tau)$-regular or $\Ex_{\mu|_j}[f_j|_{J \gets x}] \leq \epsilon/2$}, \\
      1 & \text{otherwise}.
  \end{cases}
 \]

If we sample $x$ according to $\mu|_j$ then according to Jones' regularity lemma, $f_j|_{J \gets x}$ is $(d,\tau)$-regular with probability at least $1-\epsilon/2$. This shows that $\Ex_{\mu|_j}[(1-g_j) f_j] \leq \epsilon/2 + \epsilon/2 = \epsilon$.

It remains to show that $(g_1,g_2)$ is a generalized polymorphism of $P_\NAND$ for small enough $\delta$. If this is not the case, then there exists an assignment $(x^{(1)},x^{(2)}) \in P^J$ such that $(g_1|_{J \gets x^{(1)}},g_2|_{J \gets x^{(2)}})$ is not a generalized polymorphism of $P_\NAND$, which implies that $f_1|_{J \gets x^{(1)}},f_2|_{J \gets x^{(2)}}$ are both regular and have expectation at least $\epsilon/2$. \Cref{lem:monotone-counting-NAND-fractional} thus implies that $\Ex_\mu[f_1|_{J \gets x^{(1)}} f_2|_{J \gets x^{(2)}}] \geq \gamma$, which is ruled out by defining $\delta = \min(\mu)^M\gamma/2$.
\end{proof}

The proofs of \Cref{thm:friedgut-regev-fractional,thm:intro-friedgut-regev} extend to other predicates. We leave this to future work.

\section{General alphabets} \label{sec:general-alphabets}

In this section we prove \Cref{thm:intro-alphabet}, an analog of \Cref{thm:intro-main} for some predicates over larger alphabets.

\thmintroalphabet*

The proof follows the general outline of the proof of \Cref{thm:intro-main}, but is much simpler. First, the counting lemma is direct rather than inductive. Second, there is no need to accommodate affine relations.

Before starting the proof proper, we need to generalize the notion of regularity to the setting of functions $f\colon \Sigma^n \to \Sigma$. The original proof of It Ain't Over Till It's Over~\cite{MOO10} actually works for arbitrary alphabets. Given a function $f\colon \Sigma^n \to \{0,1\}$ and a distribution $\mu$ over $\Sigma$ with full support, the Efron--Stein decomposition is the unique decomposition
\[
 f = \sum_S f_S
\]
in which $f_S$ depends only on the coordinates in $S$, the functions $f_S$ are orthogonal, and $f_S$ has zero expectation if we fix the values of coordinates in some set $T \not\supseteq S$. The definition of low-degree influences readily extends:
\[
 \Inf_i[f^{\leq d}] = \sum_{\substack{|S| \leq d \\ i \in S}} \|f_S\|^2,
\]
where the norm is computed according to $\mu$.

This prompts the following definition of regularity.

\begin{definition}[Regularity for arbitrary alphabets] \label{def:alphabet-regularity}
Let $\Sigma$ be a finite alphabet of size at least~$2$, let $\mu$ be a distribution on $\Sigma$ with full support, and let $d \in \mathbb{N}$ and $\tau > 0$.

A function $f\colon \Sigma^n \to \Sigma$ is $(d,\tau)$-regular with respect to $\mu$ if $\Inf_i[(f^{=\sigma})^{\leq d}]$ for all $i \in [n]$ and $\sigma = \Sigma$, where
\[
 f^{=\sigma}(x) = \begin{cases}
     1 & \text{if } f(x) = \sigma, \\
     0 & \text{otherwise}.
 \end{cases}
\]
\end{definition}

With this definition, we can extend both It Ain't Over Till It's Over and Jones' regularity lemma.

\begin{theorem}[It Ain't Over Till It's Over for arbitrary alphabets] \label{thm:it-aint-over-alphabet}
For every alphabet $\Sigma$, full support distribution $\mu$, $q \in (0,1)$ and $\epsilon > 0$ the following holds for some constant $C$ and $d = \Theta(\log(1/\epsilon))$, $\tau = \Theta(\epsilon^C)$, and $\delta = \Theta(\epsilon^C)$.

Let $\rho$ be a random restriction obtained by sampling each coordinate $i \in [n]$ independently according to the following law: with probability $1-q$, draw a sample from $\mu$, and otherwise, draw $*$.

If $f\colon \Sigma^n \to \Sigma$ is $(d,\tau)$-regular with respect to $\mu$ then for every $\sigma \in \Sigma$ such that $\Pr_{\mu}[f = \sigma] \geq \epsilon$ we have
\[
 \Pr_\rho[\Pr[f|_\rho = \sigma] \geq \delta] \geq 1-\epsilon.
\]
\end{theorem}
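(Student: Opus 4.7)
The plan is to reduce the statement, for each fixed $\sigma \in \Sigma$, to a general-product-space version of the original It Ain't Over Till It's Over theorem of MOO10 applied to the indicator function $f^{=\sigma}$. First I would observe that $\Pr_\mu[f = \sigma] = \Ex_{\mu^n}[f^{=\sigma}]$ and $\Pr[f|_\rho = \sigma] = \Ex[(f^{=\sigma})|_\rho]$, so the conclusion of the theorem is exactly the statement that $\Pr_\rho[\Ex[(f^{=\sigma})|_\rho] \geq \delta] \geq 1 - \epsilon$, which is a statement about the Boolean-valued (hence $[0,1]$-valued) function $f^{=\sigma}\colon \Sigma^n \to \{0,1\}$.

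Next I would unpack the hypothesis. By \Cref{def:alphabet-regularity}, the assertion that $f$ is $(d,\tau)$-regular with respect to $\mu$ is, by definition, the assertion that each indicator $f^{=\sigma}$ satisfies $\Inf_i[(f^{=\sigma})^{\leq d}] \leq \tau$ for every coordinate $i \in [n]$, where the low-degree truncation is taken in the Efron--Stein decomposition of $f^{=\sigma}$ on the product space $(\Sigma^n, \mu^n)$. Thus, for each $\sigma$ individually, $f^{=\sigma}$ is a $(d,\tau)$-regular bounded function on $(\Sigma^n,\mu^n)$ in the standard sense of regularity on a product space.

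The final step is to invoke the product-space version of It Ain't Over Till It's Over. The original MOO10 proof, which underlies \Cref{thm:it-aint-over} and its fractional refinement \Cref{thm:it-aint-over-fractional}, is written for arbitrary finite product spaces with full-support marginals: for every finite product space $(\Sigma^n,\mu^n)$ with $\mu$ of full support, every $q \in (0,1)$ and every $\epsilon>0$, there exist $d = \Theta(\log(1/\epsilon))$ and $\tau,\delta = \Theta(\epsilon^C)$ such that if $g\colon \Sigma^n \to [0,1]$ is $(d,\tau)$-regular and $\Ex_{\mu^n}[g] \geq \epsilon$, then $\Pr_\rho[\Ex[g|_\rho] \geq \delta] \geq 1-\epsilon$, where $\rho$ is the $(q,\mu)$-restriction in the statement of the theorem. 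Applying this with $g := f^{=\sigma}$ yields the result for that particular $\sigma$; since the constant $C$ is allowed to depend on $\mu$ and $q$ (and $|\Sigma|$ is fixed), no union bound is needed because the conclusion is quantified over $\sigma$ outside the probability.

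The main obstacle is a verification rather than a new idea: one must check that MOO10's argument goes through verbatim on $(\Sigma^n,\mu^n)$. The two ingredients are the Efron--Stein decomposition, which is available on any product space, and a hypercontractive inequality for the associated noise operator on $(\Sigma^n,\mu^n)$, whose constant depends only on $\min_{\sigma'} \mu(\sigma')$. Since $\mu$ is fixed throughout, this constant is absorbed into the exponent $C = C(\mu,q)$, and the claimed dependencies $d = \Theta(\log(1/\epsilon))$, $\tau,\delta = \Theta(\epsilon^C)$ are preserved. (Indeed, MOS13, cited in the introduction for reverse hypercontractivity on general product measures, supplies exactly the tools needed to carry this step through without loss.)
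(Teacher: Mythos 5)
Your proof is correct and matches the paper's own argument exactly: the paper states that the theorem ``follows immediately from~\cite{MOO10} by considering the functions $f^{=\sigma}$,'' which is precisely your reduction to indicator functions and application of the product-space version of It Ain't Over Till It's Over. Your additional observation that no union bound over $\sigma$ is needed (since $\sigma$ is quantified outside the probability) is a correct and worthwhile clarification.
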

\begin{proof}
Follows immediately from~\cite{MOO10} by considering the functions $f^{=\sigma}$.
\end{proof}

Jones' regularity lemma can also be extended, as we show in \Cref{sec:regularity}.

\begin{theorem}[Jones' regularity lemma for arbitrary alphabets] \label{thm:jones-alphabet}
For every alphabet $\Sigma$, $m \in \mathbb{N}$, full support distributions $\mu_1,\dots,\mu_m$, and every $\epsilon, \tau > 0$, $d \in \mathbb{N}$ the following holds for some function $M \in \mathbb{N}$.

For all functions $f_1,\dots,f_m\colon \Sigma^n \to \Sigma$ there exists a set $J$ of size at most $M$ such that for all $j$,
\[
 \Pr_{x \sim \mu_j}[f|_{J \gets x} \text{ is $(d,\tau)$-regular with respect to $\mu_j$}] \geq 1 - \epsilon.
\]
\end{theorem}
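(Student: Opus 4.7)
The plan is to follow the proof of Theorem~\ref{thm:jones-junta} in the larger-alphabet setting, with the Efron-Stein decomposition of functions $g\colon \Sigma^n \to [0,1]$ with respect to the product measure $\mu^n$ playing the role of the Fourier expansion. The two identities used throughout the Boolean argument carry over by orthogonality of Efron-Stein components: for any $J \subseteq [n]$ and $i \notin J$,
\[
  \Ex_{x \sim \mu^J}\!\bigl[\|(g|_{J \gets x})^{\le d}\|_\mu^2\bigr] = \sum_{S \,:\, |S \setminus J| \le d} \|g_S\|_\mu^2,
  \qquad
  \Ex_{x \sim \mu^J}\!\bigl[\Inf_i[(g|_{J \gets x})^{\le d}]\bigr] = \sum_{S \,:\, i \in S,\ |S \setminus J| \le d} \|g_S\|_\mu^2.
\]

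By Definition~\ref{def:alphabet-regularity}, regularity of $f_j$ is equivalent to simultaneous regularity of the $|\Sigma|$ Boolean indicators $f_j^{=\sigma}\colon \Sigma^n \to \{0,1\}$. Starting with $J = \emptyset$, I would iterate as long as some $j \in [m]$ fails the stopping criterion $\Pr_{x \sim \mu_j^J}[f_j|_{J \gets x} \text{ is not $(d,\tau)$-regular}] \le \epsilon$. A Markov-type bound, together with the standard estimate $\sum_i \Inf_i[h^{\le d}] \le d\|h\|^2$ for any $h\colon \Sigma^n \to [0,1]$, converts such a failure into $\sum_{i,\sigma} \Ex_x\bigl[\Inf_i[(f_j^{=\sigma}|_{J \gets x})^{\le d}]\bigr] > \epsilon\tau$, guaranteeing the existence of some coordinate $i \notin J$ and some $\sigma \in \Sigma$ with significant expected low-degree influence, which I would add to $J$.

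The termination of the iteration is controlled by the potential
\[
  \Phi(J) = \sum_{j=1}^m \sum_{\sigma \in \Sigma} \sum_{S \,:\, |S \setminus J| \le D} \|(f_j^{=\sigma})_S\|_{\mu_j}^2,
\]
for a suitable internal degree $D \ge d$, which is monotone non-decreasing in $J$ (the index set of the inner sum can only grow as $J$ grows) and bounded above by $\sum_{j,\sigma} \|f_j^{=\sigma}\|_{\mu_j}^2 = m$. A direct computation using the restriction identity gives
\[
  \Phi(J \cup \{i\}) - \Phi(J) = \sum_{j,\sigma} \Ex_{x \sim \mu_j^J}\!\bigl[\Inf_i[(f_j^{=\sigma}|_{J \gets x})^{= D+1}]\bigr].
\]

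The main obstacle is the standard Jones-lemma mismatch between the degree at which non-regularity is witnessed (at most $d$) and the degree governing the potential gain ($=D+1$). This is bridged in the same way as in the proof of Theorem~\ref{thm:jones-junta}: by choosing $D$ slightly larger than $d$ and working with an appropriate weighted combination of Fourier masses at degrees $\le D$, the Efron-Stein mass witnessing non-regularity at some degree $k \le d$ is shown either to contribute directly to the per-step gain of $\Phi$ once the corresponding coordinate is absorbed into $J$, or to force another coordinate to do so shortly thereafter. This pins the per-step gain below by an $n$-independent quantity depending only on $\epsilon,\tau,d,m,|\Sigma|$ and $\min_{j,\sigma}\mu_j(\sigma)$, so boundedness of $\Phi$ by $m$ caps the iteration at some $M$ depending only on the stated parameters, producing a junta $J$ with $|J| \le M$. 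Since the Efron-Stein identities above hold for any full-support product measure, no further structural change beyond this cosmetic rewrite of the Boolean argument is needed.
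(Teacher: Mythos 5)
Your high-level plan --- pass to the $m|\Sigma|$ Boolean indicators $f_j^{=\sigma}\colon\Sigma^n\to\{0,1\}$, work with the Efron--Stein decomposition under $\mu_j^n$, and run a Jones-type potential argument --- is exactly the paper's strategy: the paper derives \Cref{thm:jones-alphabet} from a master lemma (\Cref{thm:jones-general-junta}) for functions $\Sigma^n\to[0,1]$, applied to the indicators. Your two restriction identities are correct, as is your computation of the one-step potential increment. However, the execution has two concrete gaps.

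First, your iteration adds a \emph{single} coordinate $i$ per step. After the Markov step you only know $\sum_{i,\sigma}\Ex_x\bigl[\Inf_i[(f_j^{=\sigma}|_{J\gets x})^{\le d}]\bigr] > \epsilon\tau$, and pigeonholing over the roughly $n|\Sigma|$ summands gives a best single coordinate with expected influence only about $\epsilon\tau/(n|\Sigma|)$: the per-step gain vanishes with $n$, so boundedness of $\Phi$ does not cap $|J|$ independently of $n$. This is not an artifact --- the coordinate witnessing non-regularity genuinely varies with the restriction $x\in\Sigma^J$. The paper's fix (proof of \Cref{thm:jones-general-noisy-junta}) is to refine the full tree $C(J_t)$ by one level --- a possibly different query at each of its $|\Sigma|^{|J_t|}$ leaves --- and set $J_{t+1}$ to all variables appearing in the refined tree. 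Then the per-step gain is $\Omega(\epsilon\tau)$, while $|J_{t+1}|\le|J_t|+|\Sigma|^{|J_t|}$ stays $n$-independent.

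Second, the degree mismatch is never actually resolved. Your potential $\sum_{S:|S\setminus J|\le D}\|g_S\|^2$ gains only from Efron--Stein mass at degree exactly $D+1$ relative to $J$, which is disjoint from the degree-$\le d$ mass certifying non-regularity, and deferring the fix to ``the proof of \Cref{thm:jones-junta}'' is circular: the paper proves that theorem from the same master lemma you are trying to establish. The paper sidesteps the mismatch by replacing hard cutoffs with \emph{noisy} influences $\Inf_i^\rho[f]=\sum_{S\ni i}\rho^{|S|}\|f_S\|^2$ and using $\Stab_\rho$ as the potential; \Cref{lem:splitting} gives the \emph{exact} identity
\[
  \Ex_{a\sim\mu}\bigl[\Stab_\rho[f|_{i\gets a}]\bigr]=\Stab_\rho[f]+\tfrac{1-\rho}{\rho}\Inf_i^\rho[f],
\]
so the stopping criterion and the potential gain refer to the same quantity, and low-degree regularity is recovered afterward via $\Inf_i[g^{\le d}]\le\rho^{-d}\Inf_i^\rho[g]$ with $\rho=1-1/d$. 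A weighted hard-cutoff potential such as $\sum_{|S\setminus J|\le d}(d+1-|S\setminus J|)\|g_S\|^2$ would also work here (its increment upon adding $i$ dominates $\Ex_x\bigl[\Inf_i[(g|_{J\gets x})^{\le d}]\bigr]$), but you never pin down the weights, so as written the argument does not close.
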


While we can prove a counting lemma in the style of \Cref{lem:monotone-counting} or \Cref{lem:counting}, the counting lemma that is useful here is similar to the one implicitly used to prove \Cref{lem:rounding-poly}. We prove this lemma in \Cref{sec:alphabet-counting}, where we also prove an analog of \Cref{lem:rounding-soundness}. Combining these with Jones' regularity lemma, we complete the proof of \Cref{thm:intro-alphabet} in \Cref{sec:alphabet-main}.

For the rest of this section, we fix $P$ and $\mu$.

\subsection{Counting lemma} \label{sec:alphabet-counting}

The counting lemma that we prove uses a restriction similar to the one appearing in \Cref{sec:main-rounding}. In that section, we had to distinguish between flexible coordinates and inflexible coordinates. In our case, all coordinates are flexible by assumption.

\begin{definition}[Restriction] \label{def:alphabet-restriction}
For every $j \in [m]$, let $w_{(j,*)}$ be a partial input, missing only the $j$'th coordinate, such that all of its completions $w_{(j,\sigma)}$ belong to $P$.

The distribution $\nu$ is a distribution on $Q := P \cup \{ w_{(j,*)} : j \in [m] \}$ defined as follows, for a small enough $q > 0$:
\begin{itemize}
    \item For $w \in P$, sample $w$ with probability
    \[
     \mu(w) - q \sum_{(j,\sigma)\colon w = w_{(j,\sigma)}} \mu|_j(\sigma).
    \]
    \item For every $j \in [m]$, sample $w_{(j,*)}$ with probability $q$.
\end{itemize}
Concretely, it suffices to take $q = \min(\mu)/m$, where $\min(\mu) = \min_{w \in P} \mu(w)$.

Given $\rho \in Q$, the distribution $\mu|\rho$ is obtained by sampling the missing coordinate in $w_{(j,*)}$ using $\mu|_j$.

By construction, if $\rho \sim \nu$ and $x \sim \mu|\rho$ then $x \sim \mu$. Moreover, the marginal distribution of $\rho_j$ given $\rho_j \neq *$ is $\mu|_j$.
\end{definition}

\begin{lemma}[Counting lemma] \label{lem:alphabet-counting}
Let $\phi_1,\dots,\phi_m\colon \Sigma^n \to \Sigma$, and let $\rho \in Q^n$. Let $w \in \Sigma^m$ be such that for each $j \in [m]$,
\[
 \Pr_{\mu|_j}[\phi_j|_{\rho|_j} = w_j] \geq \epsilon.
\]
Then
\[
 \Pr_{\mu|\rho}[(\phi_1,\dots,\phi_m) = w] \geq \epsilon^m.
\]
\end{lemma}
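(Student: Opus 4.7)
The plan is to observe that, conditional on $\rho$, the $m$ events $\phi_j(x^{(j)}) = w_j$ are mutually independent under $\mu|\rho$ and each has probability at least $\epsilon$, so their joint probability is at least $\epsilon^m$.

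First I would unpack the sampling procedure for $\mu|\rho$ from \Cref{def:alphabet-restriction}. Given $\rho \in Q^n$, each coordinate $i \in [n]$ is sampled independently: if $\rho_i \in P$, the sample equals $\rho_i$ deterministically; if $\rho_i = w_{(k,*)}$ for some $k \in [m]$, then for $\ell \neq k$ the $\ell$-th entry is fixed to $(w_{(k,*)})_\ell$ and only the $k$-th entry is drawn, from $\mu|_k$. Thus all randomness in $\mu|\rho$ comes from independent draws at the ``free slots'' $(i,k)$ with $\rho_i = w_{(k,*)}$.

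Next I would look at the marginal of the $j$-th column $x^{(j)} := (y_i^{(j)})_{i \in [n]}$, where $y \sim \mu|\rho$. The deterministic entries of $x^{(j)}$ are exactly the non-$*$ entries of $\rho|_j$, and the entries at positions $i$ with $\rho|_j(i) = *$ (equivalently, $\rho_i = w_{(j,*)}$) are i.i.d.\ samples from $\mu|_j$. Consequently $x^{(j)}$ has exactly the distribution with respect to which $\phi_j|_{\rho|_j}$ was evaluated in the hypothesis, and so
\[
 \Pr_{\mu|\rho}[\phi_j(x^{(j)}) = w_j] = \Pr_{\mu|_j}[\phi_j|_{\rho|_j} = w_j] \geq \epsilon.
\]

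The key structural point is that the free slot of $x^{(j)}$ at position $i$ exists iff $\rho_i = w_{(j,*)}$; since a single $\rho_i$ equals at most one $w_{(k,*)}$, the free slots of $x^{(j)}$ and $x^{(k)}$ live at disjoint positions whenever $j \neq k$. Independence of the draws at distinct free slots then implies that the random variables $\phi_1(x^{(1)}), \dots, \phi_m(x^{(m)})$ are mutually independent under $\mu|\rho$. Multiplying the $m$ lower bounds gives
\[
 \Pr_{\mu|\rho}[(\phi_1(x^{(1)}), \dots, \phi_m(x^{(m)})) = w] = \prod_{j=1}^m \Pr_{\mu|_j}[\phi_j|_{\rho|_j} = w_j] \geq \epsilon^m,
\]
as required. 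There is no real obstacle here: once the two-step description of $\mu$ from \Cref{def:alphabet-restriction} is unpacked, the lemma reduces to a bookkeeping check that the columns of a sample from $\mu|\rho$ are driven by disjoint independent coins.
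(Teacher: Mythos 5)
Your proof is correct and is essentially the paper's proof, with the independence claim spelled out: the paper simply notes that since each coordinate of $\rho$ has at most one free slot, the events $\phi_j|_{\rho|_j} = w_j$ are independent, and concludes. Your more explicit bookkeeping (free slots of distinct columns live at disjoint positions, so the column vectors $x^{(j)}$ are mutually independent) is exactly the observation the paper is implicitly invoking.
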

\begin{proof}
Since $\rho$ is supported on inputs with at most one free coordinate, the events $\phi_j|_{\rho|_j} = w_j$ are independent. The lemma immediately follows.
\end{proof}

We will eventually choose $\rho$ and $\delta$ (a bound on $\Pr_\mu[(f_1,\dots,f_m) \notin P]$) so that the counting lemma implies that any $w$ satisfying the conditions in the lemma belongs to $P$. This suggests the following rounding procedure.

\begin{definition}[Rounding] \label{def:alphabet-rounding}
Let $\phi_1,\dots,\phi_m \in \Sigma^n \to \Sigma$ and let $\rho \in Q^n$.

For a parameter $\epsilon \in (0,1)$, we define $\round_j^{\rho,\epsilon}(\phi_j)\colon \Sigma^n \to \Sigma$ as follows. First, let
\[
 \Sigma_{\geq \epsilon} := \{ \sigma \in \Sigma : \Pr_{\mu|_j}[\phi_j|_{\rho_j} = \sigma] \geq \epsilon \},
\]
and choose $\sigma_0 \in \Sigma_{\geq \epsilon}$ arbitrarily (say the symbol with the largest probability). Then
\[
 \round_j^{\rho,\epsilon}(\phi_j)(x) =
 \begin{cases}
     \phi_j(x) & \text{if } \phi_j(x) \in \Sigma_{\geq \epsilon}, \\
     \sigma_0 & \text{otherwise}.
 \end{cases}
\]
\end{definition}

The following lemma bounds the error in rounding, and roughly corresponds to \Cref{lem:rounding-soundness}.

\begin{lemma}[Rounding lemma] \label{lem:alphabet-rounding}
For every $\epsilon > 0$ there exist $d \in \mathbb{N}$ and $\tau,\eta > 0$ such that the following holds.

Let $\phi_1,\dots,\phi_m\colon \Sigma^n \to \Sigma$. If $\phi_j$ is $(d,\tau)$-regular with respect to $\mu|_j$ for all $j$ then for all $j$,
\[
 \Ex_{\rho \sim \nu^n}\bigl[\Pr_{\mu|_j}[\round_j^{\rho,\eta}(\phi_j) \neq \phi_j]\bigr] \leq \epsilon.
\]
\end{lemma}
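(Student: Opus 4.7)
The plan is to reduce directly to Theorem~\ref{thm:it-aint-over-alphabet} (It Ain't Over Till It's Over for arbitrary alphabets). The key observation is that for each coordinate $i \in [n]$, the marginal distribution of $\rho|_j^{(i)}$ under $\nu$ is exactly the restriction distribution required by that theorem: by construction of $\nu$ in Definition~\ref{def:alphabet-restriction}, $\rho|_j^{(i)} = *$ with probability $q$, and conditionally on $\rho|_j^{(i)} \neq *$ it is distributed as $\mu|_j$. Hence $\rho|_j$ is a random restriction of the kind analyzed in Theorem~\ref{thm:it-aint-over-alphabet} with alphabet $\Sigma$, distribution $\mu|_j$, and freezing probability $q$.

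First I would apply Theorem~\ref{thm:it-aint-over-alphabet} for each $j \in [m]$, with distribution $\mu|_j$, freezing probability $q$, and accuracy parameter $\epsilon' := \epsilon/(2|\Sigma|)$, to obtain $d_j, \tau_j, \eta_j$ such that for every $(d_j,\tau_j)$-regular function $\phi_j$ and every $\sigma \in \Sigma$ with $\Pr_{\mu|_j}[\phi_j = \sigma] \geq \epsilon'$ one has
\[
\Pr_{\rho \sim \nu^n}\bigl[\Pr_{\mu|_j}[\phi_j|_{\rho|_j} = \sigma] \geq \eta_j\bigr] \geq 1 - \epsilon'.
\]
Taking $d := \max_j d_j$, $\tau := \min_j \tau_j$, and $\eta := \min_j \eta_j$ makes these bounds hold uniformly in $j$, since $(d,\tau)$-regularity implies $(d_j,\tau_j)$-regularity for every $j$.

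Next I would unpack the rounding error. By Definition~\ref{def:alphabet-rounding}, $\round_j^{\rho,\eta}(\phi_j)(x) \neq \phi_j(x)$ iff $\phi_j(x) = \sigma$ for some $\sigma$ with $\Pr_{\mu|_j}[\phi_j|_{\rho|_j} = \sigma] < \eta$. Since $\rho$ and $x$ are drawn independently in the statement,
\[
\Ex_{\rho \sim \nu^n}\bigl[\Pr_{\mu|_j}[\round_j^{\rho,\eta}(\phi_j) \neq \phi_j]\bigr] = \sum_{\sigma \in \Sigma} \Pr_{\mu|_j}[\phi_j = \sigma] \cdot \Pr_{\rho \sim \nu^n}\bigl[\Pr_{\mu|_j}[\phi_j|_{\rho|_j} = \sigma] < \eta\bigr].
\]
Splitting the sum at the threshold $\epsilon'$: symbols $\sigma$ with $\Pr_{\mu|_j}[\phi_j = \sigma] < \epsilon'$ contribute at most $\epsilon'$ each because the first factor is small, while symbols with $\Pr_{\mu|_j}[\phi_j = \sigma] \geq \epsilon'$ contribute at most $\epsilon'$ each by the IAOIO bound above, making the second factor small. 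Summing over at most $|\Sigma|$ symbols in each class yields a total of at most $2|\Sigma|\epsilon' = \epsilon$, as required.

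The argument is essentially a routine application of It Ain't Over Till It's Over; the only subtlety is verifying that the marginal of $\rho|_j$ under $\nu$ matches the restriction distribution in Theorem~\ref{thm:it-aint-over-alphabet}, which is an immediate consequence of Definition~\ref{def:alphabet-restriction}. I do not foresee a substantive obstacle beyond bookkeeping.
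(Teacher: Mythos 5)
Your proposal is correct and takes essentially the same approach as the paper: apply It Ain't Over Till It's Over (Theorem~\ref{thm:it-aint-over-alphabet}) with accuracy $\Theta(\epsilon/|\Sigma|)$, write the rounding error as a sum over symbols $\sigma$ of $\Pr_{\mu|_j}[\phi_j = \sigma]\cdot\Pr_\rho[\sigma\notin\Sigma_{\ge\eta}]$, and split into the cases where $\Pr_{\mu|_j}[\phi_j=\sigma]$ is small versus where IAOIO makes $\Pr_\rho[\sigma\notin\Sigma_{\ge\eta}]$ small. Your bookkeeping of the $j$-dependence via taking max/min and your verification that the marginal of $\rho|_j$ matches the IAOIO restriction distribution are both correct (the latter is already recorded in Definition~\ref{def:alphabet-restriction}); the only trivial slack is that each symbol falls into exactly one class, so the total is $|\Sigma|\epsilon'$ rather than $2|\Sigma|\epsilon'$, but this only makes your bound tighter.
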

\begin{proof}
Apply \Cref{thm:it-aint-over-alphabet} (It Ain't Over Till It's Over) with $\Sigma,\mu,q$ and $\epsilon := \epsilon/|\Sigma|$ to obtain $d,\tau,\delta$. We take $\eta := \delta$.

Recall the definition of $\Sigma_{\geq \eta}$ in \Cref{def:alphabet-rounding}. By definition,
\[
 \Pr_{\mu|_j}[\round_j^{\rho,\eta} \neq \phi_j] =
 \sum_{\sigma \notin \Sigma_{\geq \eta}} \Pr_{\mu|_j}[\phi_j = \sigma].
\]
Therefore
\[
 \Ex_{\rho \sim \nu^n}\bigl[\Pr_{\mu|_j}[\round_j^{\rho,\eta} \neq \phi_j]\bigr] =
 \sum_{\sigma \in \Sigma} \Pr[\sigma \notin \Sigma_{\geq \eta}] \Pr_{\mu|_j}[\phi_j = \sigma].
\]

For each $\sigma \in \Sigma$, we consider two cases. If $\Pr_{\mu|_j}[\phi_j = \sigma] < \epsilon/|\Sigma|$ then the summand is clearly at most $\epsilon/|\Sigma|$. Otherwise, $\Pr[\sigma \notin \Sigma_{\geq \eta}] \leq \epsilon/|\Sigma|$ by \Cref{thm:it-aint-over-alphabet}. In both cases, the summand is at most $\epsilon/|\Sigma|$. Summing over all $\sigma \in \Sigma$ yields the result.
\end{proof}

\subsection{Main result} \label{sec:alphabet-main}

We are now ready to prove \Cref{thm:intro-alphabet}.

\begin{proof}[Proof of \Cref{thm:intro-alphabet}]
We prove the theorem with an error probability of $O(\epsilon)$ rather than $\epsilon$.

Apply \Cref{lem:alphabet-rounding} with $\epsilon$ to obtain $d,\tau,\eta$. Apply \Cref{thm:jones-alphabet} with alphabet $\Sigma$, distributions $\mu|_1,\dots,\mu|_m$, and parameters $\epsilon,d,\tau$ to obtain $M$ and a set $J$ of size at most $M$ such that for all $j$,
\[
 \Pr_{x \sim \mu_j^J}[f|_{J \gets x} \text{ is $(d,\tau)$-regular with respect to $\mu_j$}] \geq 1-\epsilon.
\]

As in the proof of \Cref{thm:intro-main}, we say that a subfunction $f_j|_{J \gets x}$ is \emph{good} if there exist $(x^{(1)},\dots,x^{(m)}) \in P^J$ such that $x^{(j)} = x$ and $f_k|_{J \gets x^{(k)}}$ is $(d,\tau)$-regular for all $k$. A simple argument (written down explicitly as \Cref{lem:good-most}) shows that for all $j$,
\[
 \Pr_{x \sim \mu|_j}[f_j|_{J \gets x} \text{ is good}] \geq 1-m\epsilon.
\]

With hindsight, define
\[
 \delta = \min(\mu)^M \eta^m/3.
\]

We have
\[
 \Ex_{\rho \sim \nu^{J^c}} \Ex_{(x^{(1)},\dots,x^{(m)}) \sim \mu^J} \Pr_{\mu^{J^c}|\rho}[(f_1|_{J \gets x^{(1)}},\dots,f_m|_{J \gets x^{(m)}}) \notin P] = \Pr_{\mu^n}[(f_1,\dots,f_m) \notin P] \leq \delta.
\]
Therefore with probability at least $1/2$ over the choice of $\rho$,
\[
 \Ex_{(x^{(1)},\dots,x^{(m)}) \sim \mu^J} \Pr_{\mu^{J^c}|\rho}[(f_1|_{J \gets x^{(1)}},\dots,f_m|_{J \gets x^{(m)}}) \notin P] \leq 2\delta < \min(\mu)^M \eta^m.
\]
In this case, we say that $\rho$ is \emph{good}.

If $\rho$ is good then for any $(x^{(1)},\dots,x^{(m)}) \in P^J$,
\[
 \Pr_{\mu|\rho}[(f_1|_{J \gets x^{(1)}},\dots,f_m|_{J \gets x^{(m)}}) \notin P] < \eta^M.
\]
Applying \Cref{lem:alphabet-counting}, this shows that $(\round_1^{\rho,\eta}(f_1|_{J \gets x^{(1)}}), \dots, \round_m^{(\rho,\eta}(f_m|_{J \gets x^{(m)}}))$ is a generalized polymorphism of $P$.
Accordingly, we define $g_1^\rho,\dots,g_m^\rho$ by
\[
 g_j^\rho|_{J \gets x} = \round_j^{(\rho,\eta)}(f_j|_{J \gets x}).
\]
For every good $\rho$, the functions $(g_1,\dots,g_m)$ are a generalized polymorphism of $P$.

If $f_j|_{J \gets x}$ is good then \Cref{lem:alphabet-rounding} shows that
\[
 \Ex_{\substack{\rho \sim \nu^{J^c} \\ \rho \text{ good}}}\bigl[\Pr_{\mu|_j^{J^c}}[g_j^\rho|_{J \gets x} \neq f_j|_{J \gets x}]\bigr] \leq \epsilon/\Pr[\rho\text{ is good}] \leq 2\epsilon.
\]
Since the probability that $f_j|_{J \gets x}$ is not good is at most $m\epsilon$, it follows that
\[
 \sum_{j=1}^m \Ex_{\substack{\rho \sim \nu^{J^c} \\ \rho \text{ good}}} \bigl[\Pr_{\mu_j^n}[g_j^\rho \neq f_j]\bigr] \leq m(m+2)\epsilon.
\]
In particular, we can find $\rho$ for which this sum is at most $m(m+2)\epsilon$. Taking $g_j = g_j^\rho$ for all $j$ completes the proof.
\end{proof}

\section{Regularity lemmas} \label{sec:regularity}

In this section we prove the various versions of Jones' regularity lemma: \Cref{thm:jones-was-tree,thm:jones-junta,thm:jones-was-tree-fractional,thm:jones-alphabet}.

We derive all of them from the following general version.

\begin{theorem}[Jones' regularity lemma] \label{thm:jones-general-junta}
For every alphabet $\Sigma$, every $m \in \mathbb{N}$, every full support distributions $\mu_1,\dots,\mu_m$, and every $\epsilon, \tau > 0$, $d \in \mathbb{N}$ the following holds for some function $\mathcal{M}\colon \mathbb{N} \to \mathbb{N}$.

For all functions $f_1,\dots,f_m\colon \Sigma^n \to [0,1]$ and set $J_0 \subseteq [n]$ there exists a set $J \supseteq J_0$ of size at most $\mathcal{M}(|J|)$ such that for all $j$,
\[
 \Pr_{x \sim \mu_j}[f_j|_{J \gets x} \text{ is $(d,\tau)$-regular with respect to $\mu_j$}] \geq 1 - \epsilon.
\]
\end{theorem}

\medskip

The proof proceeds via a different notion of regularity, which uses noisy influences.

\begin{definition}[Noisy influences] \label{def:noisy-influences}
For $\rho \in (0,1)$, let $N_\rho$ be the distribution of pairs $x,y \in \Sigma^n$ sampled as follows. We sample $x \sim \mu^n$. We sample $y$ by resampling each coordinate of $x$ with probability $1-\rho$. The noise stability of a function $f\colon \Sigma^n \to \mathbb{R}$ is defined as
\[
 \Stab_\rho[f] = \Ex_{(x,y) \sim N_\rho}[f(x) f(y)] = \sum_S \rho^{|S|} \|f_S\|^2,
\]
where $f_S$ are the components of the Efron--Stein decomposition of $f$, and the norm is computed according to $\mu$.

For a coordinate $i$, let $E_i f$ be obtained by averaging over the coordinate $i$:
\[
 E_i f(x) = \Ex_{a \sim \mu}[f(x|_{i \gets a})] = \sum_{i \notin S} f_S.
\]
The noisy influences of $f$ are
\[
 \Inf_i^\rho[f] = \Stab_\rho[f - E_i f] = \sum_{i \in S} \rho^{|S|} \|f_S\|^2.
\]

We say that $f$ is $(\rho,\tau)$-noisy-regular if $\Inf_i^\rho[f] \leq \tau$ for all $i$.
\end{definition}

We derive \Cref{thm:jones-general-junta} from the following version for noisy influences.

\begin{theorem}[Jones' regularity lemma for noisy influences] \label{thm:jones-general-noisy-junta}
For every alphabet $\Sigma$, every $m \in \mathbb{N}$, every full support distributions $\mu_1,\dots,\mu_m$, and every $\epsilon, \tau > 0$, $d \in \mathbb{N}$ the following holds for some function $\mathcal{M}\colon \mathbb{N} \to \mathbb{N}$.

For all functions $f_1,\dots,f_m\colon \Sigma^n \to [0,1]$ and every $J_0 \subseteq [n]$ there exists a set $J \supseteq J_0$ of size at most $\mathcal{M}(|J_0|)$ such that for all $j$,
\[
 \Pr_{x \sim \mu_j}[f_j|_{J \gets x} \text{ is $(\rho,\tau)$-noisy-regular with respect to $\mu_j$ for all $j$}] \geq 1 - \epsilon.
\]
\end{theorem}

The proof of \Cref{thm:jones-general-noisy-junta} uses a potential argument, and employs decision trees as intermediate representations. For a decision tree $T$, we define
\[
 \Phi_{f,\mu}^\rho(T) = \Ex_{\ell \in T}[\Stab_\rho[f|_\ell]],
\]
where $\ell$ is a random leaf of $T$ sampled using $\mu$. The potential function we use is the sum of these potentials for all $f_j$:
\[
 \Phi(T) = \sum_j \Phi_{f_j,\mu_j}^\rho(T).
\]

All properties of this potential function follow from the following simple calculation, which corresponds to splitting a node in the tree.

\begin{lemma}[Splitting a node] \label{lem:splitting}
For every $i$ we have
\[
 \Ex_{a \sim \mu}[\Stab_\rho[f|_{i \gets a}]] = \Stab_\rho[f] + \frac{1-\rho}{\rho} \Inf_i^\rho[f].
\]
\end{lemma}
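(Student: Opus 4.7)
The plan is to expand both sides using the Efron--Stein decomposition $f = \sum_{S \subseteq [n]} f_S$ and the Fourier-type formulas $\Stab_\rho[f] = \sum_S \rho^{|S|} \|f_S\|^2$ and $\Inf_i^\rho[f] = \sum_{S \ni i} \rho^{|S|} \|f_S\|^2$ given in \Cref{def:noisy-influences}. The only non-trivial step is understanding how the Efron--Stein decomposition behaves under fixing coordinate $i$; once this is in hand the identity falls out by a short calculation.

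First I would determine the Efron--Stein decomposition of the restriction $f|_{i \gets a}$ as a function on $\Sigma^{[n] \setminus \{i\}}$. The claim is that for every $T \subseteq [n] \setminus \{i\}$,
\[
 (f|_{i \gets a})_T \;=\; f_T + f_{T \cup \{i\}}|_{i \gets a}.
\]
The justification is the uniqueness characterization of the decomposition: both summands on the right depend only on coordinates in $T$, and for every $j \in T$ their averages $E_j f_T$ and $E_j f_{T \cup \{i\}}|_{i \gets a} = (E_j f_{T \cup \{i\}})|_{i \gets a}$ vanish by the defining property of $f_T$ and $f_{T \cup \{i\}}$ (averaging over a coordinate that lies in the index set gives zero). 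Summing over $T$ reproduces $f|_{i \gets a}$, since every $S \subseteq [n]$ is either contained in $[n] \setminus \{i\}$ (giving the first summand for $T = S$) or contains $i$ (giving the second summand for $T = S \setminus \{i\}$).

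Next I would compute $\Ex_{a \sim \mu}\|(f|_{i \gets a})_T\|^2$. Expanding the square,
\[
 \Ex_{a}\|f_T + f_{T \cup \{i\}}|_{i \gets a}\|^2 = \|f_T\|^2 + 2\,\bigl\langle f_T,\; E_i f_{T \cup \{i\}}\bigr\rangle + \Ex_a \|f_{T \cup \{i\}}|_{i \gets a}\|^2.
\]
The cross term vanishes since $E_i f_{T \cup \{i\}} = 0$ (because $i \in T \cup \{i\}$), and by Fubini $\Ex_a \|f_{T\cup\{i\}}|_{i\gets a}\|^2 = \|f_{T \cup \{i\}}\|^2$ measured with respect to $\mu^n$. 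Thus $\Ex_a\|(f|_{i \gets a})_T\|^2 = \|f_T\|^2 + \|f_{T \cup \{i\}}\|^2$.

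Finally I would assemble:
\[
 \Ex_a \Stab_\rho[f|_{i \gets a}] = \sum_{T \not\ni i} \rho^{|T|}\bigl(\|f_T\|^2 + \|f_{T \cup \{i\}}\|^2\bigr) = \sum_S \rho^{|S|} \|f_S\|^2 + \sum_{S \ni i} \bigl(\rho^{|S|-1} - \rho^{|S|}\bigr)\|f_S\|^2,
\]
where we reindexed the second sum by $S = T \cup \{i\}$ and combined the $i \notin S$ terms with the first sum. The remainder factors as $\rho^{|S|-1}(1-\rho) = \frac{1-\rho}{\rho}\rho^{|S|}$, giving $\Stab_\rho[f] + \frac{1-\rho}{\rho}\Inf_i^\rho[f]$ as required. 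I do not anticipate any real obstacle; the one spot that needs care is verifying the identification of the Efron--Stein components above, since this is where commutation of conditional expectations with restriction to a disjoint coordinate is used.
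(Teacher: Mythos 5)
Your proof is correct, and it takes a genuinely different route from the paper's. The paper proves the identity probabilistically: it splits $\Stab_\rho[f] = \Ex_{(x,y)\sim N_\rho}[f(x)f(y)]$ according to whether the noise operator keeps coordinate $i$ (probability $\rho$) or resamples it (probability $1-\rho$), yielding $\Stab_\rho[f] = \rho\,\Ex_a[\Stab_\rho[f|_{i\gets a}]] + (1-\rho)\,Q$ where $Q$ is a cross term; it then expands $\Inf_i^\rho[f] = \Stab_\rho[f - E_i f]$ and shows, after observing that three of the four resulting terms are all equal to $Q$, that $\Inf_i^\rho[f] = \Stab_\rho[f] - Q$; eliminating $Q$ between the two identities gives the lemma. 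Your argument instead works at the level of the Efron--Stein decomposition: you identify the components of the restricted function as $(f|_{i\gets a})_T = f_T + f_{T\cup\{i\}}|_{i\gets a}$ via the uniqueness characterization, use the vanishing of $E_i f_{T\cup\{i\}}$ to kill the cross term and get $\Ex_a\|(f|_{i\gets a})_T\|^2 = \|f_T\|^2 + \|f_{T\cup\{i\}}\|^2$, and then reindex the resulting sum. Both arguments are clean. The paper's avoids having to establish how the Efron--Stein decomposition commutes with restriction to a fixed coordinate (the one delicate point in your proof, which you correctly flag and correctly handle), whereas yours makes the spectral bookkeeping explicit and sidesteps the need to recognize the three-term cancellation in the expansion of $\Stab_\rho[f - E_i f]$.
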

\begin{proof}
The definition of noise stability shows that
\[
 \Stab_\rho[f] = \rho \Ex_{a \sim \mu}[\Stab_\rho[f|_{i \gets a}]] +
 (1-\rho) \Ex_{\substack{a,b \sim \mu \\ (x',y') \sim N_\rho}}[f|_{i \gets a}(x') f|_{i \gets b}(y')].
\]

Similarly, the noisy influence is
\[
 \Inf_i^\rho[f] = \Ex_{\substack{(x,y) \sim N_\rho \\ a,b \sim \mu}}[(f(x) - f(x|_{i \gets a}))(f(y) - f(y|_{i \gets b})] = 
 \Stab_\rho[f] - \Ex_{\substack{a,b \sim \mu \\ (x',y') \sim N_\rho}}[f|_{i \gets a}(x') f|_{i \gets b}(y')],
\]
where the second term is the result of summing three identical terms with different signs. Substituting the earlier formula gives
\[
 \Inf_i^\rho[f] = \rho \Ex_{a \sim \mu}[\Stab_\rho[f|_{i \gets a}]] -
 \rho \Ex_{\substack{a,b \sim \mu \\ (x',y') \sim N_\rho}}[f|_{i \gets a}(x') f|_{i \gets b}(y')].
\]
The lemma immediately follows.
\end{proof}

We can now prove \Cref{thm:jones-general-noisy-junta}.

\begin{proof}
For a set $J$, let $C(J)$ be the decision tree querying all coordinates in $J$.

We construct a sequence of sets $J_t$ as follows. The start point is $J_0$. If $J_t$ doesn't satisfy the conclusion of the lemma for $f_j$, by \Cref{lem:splitting} we can add one more level to $C(J_t)$ to obtain a tree $T'$ such that
\[
 \Phi(T') \geq \Phi(C(J_t)) + \frac{1-\rho}{\rho}\epsilon\tau.
\]
Let $J_{t+1}$ consist of all variables appearing in $T'$. We can obtain a tree with the same set of leaves as $C(J_{t+1})$ by splitting nodes of $T'$, and so \Cref{lem:splitting} implies that
\[
 \Phi(C(J_{t+1})) \geq \Phi(T') \geq \Phi(C(J_t)) + \frac{1-\rho}{\rho}\epsilon\tau.
\]

Since $\Stab_\rho[f] \leq \|f\|^2$ for every function $f$, it is easy to check that $0 \leq \Phi(T) \leq m$ for all $T$. In particular, the process above must terminate after at most $m/\epsilon\tau \cdot \rho/(1-\rho)$ steps.
Since $|J_{t+1}| \leq |J_t| + |\Sigma|^{|J_t|}$, the final size of the resulting set $J$ can be bounded independently of $n$.
\end{proof}

We deduce \Cref{thm:jones-general-junta} by relating low-degree influences and noisy influences.

\begin{proof}[Proof of \Cref{thm:jones-general-junta}]
If a function $g\colon \Sigma^n \to [0,1]$ is $(\rho,\tau)$-noisy-regular then for every $\rho \in (0,1)$ and every $i \in [n]$ we have
\[
 \Inf_i[g^{\leq d}] = \sum_{\substack{|S| \leq d \\ i \in S}} \|f_S\|^2 \leq \rho^{-d} \sum_{i \in S} \rho^{|S|} \|f_S\|^2 = \rho^{-d} \Inf_i^\rho[g].
\]

We choose $\rho = 1 - 1/d$ and apply \Cref{thm:jones-general-noisy-junta} with $\tau := \rho^d \tau$ to complete the proof, observing that $\rho^d = \Theta(1)$.
\end{proof}

The proof makes it clear that the dependence of $\mathcal{M}$ on the parameters is quite bad (of tower type).

\section{Open questions} \label{sec:open-questions}

We would like to highlight the following open questions:
\begin{enumerate}
    \item Extend \Cref{thm:intro-alphabet} to cover all predicates $P \subseteq \Sigma^m$.

    The simplest predicate that our techniques cannot handle is $\{(a,b,c) \in \mathbb{Z}_3 : a + b + c \neq 0\}$.
    
    \item Prove \Cref{thm:intro-main} with the additional guarantee that if $f_i = f_j$ and $\mu|_i = \mu|_j$ then $g_i = g_j$.

    This guarantee follows from \Cref{thm:intro-main} when all generalized polymorphisms $g_1,\dots,g_m$ of $P$ are such that if $\mu|_i = \mu_j$ then either $g_i = g_j$ or $\Pr_{\mu|_i}[g_i \neq g_j] = \Omega(1)$. This is the case for the predicates $P_{m,b}$ considered in \Cref{thm:intro-blr}. We are also able to provide this guarantee in the monotone setting (\Cref{thm:intro-monotone}).

    There are other situations in which we can prove this guarantee as a corollary of \Cref{thm:intro-main}. One example is the predicate $P_\land = \{(a,b,a \land b) : a,b \in \{0,1\}\}$. In this case the generalized polymorphisms $g_1,g_2,g_3$ come in two types: (i) $g_1 = g_2 = g_3 = \bigwedge_{i \in S} x_i$ for some $S$; (ii) $g_1 = g_3 = 0$ or $g_2 = g_3 = 0$. In the first case, we automatically get the guarantee. In the second case, say $g_1 = g_3 = 0$, the only claim which doesn't automatically hold is that if $f_1 = f_2$ and $\mu|_1 = \mu|_2$ then $g_1 = g_2$. In this case $g_2$ is close to $g_1$ (since $f_1 = f_2$ and $g_1,g_2$ are close to $f_1,f_2$), and so we can set $g_2 \equiv 0$ to satisfy the guarantee while maintaining the other properties in the theorem.

    Many predicates satisfy a similar but more restricted type of guarantee: every generalized polymorphism $(g_1,\dots,g_m)$ is such that either (i) all functions are of the form $x_i$ or $1 - x_i$ (for the same $i$), or (ii) some of the functions are constant, and these constants constitute a ``certificate'' for the predicate. This case can be handled just as the case of $P_\land$. A characterizations of this type of behavior can be found in~\cite{Filmus2025}.

    \item Determine the optimal relation between $\epsilon$ and $\delta$ in \Cref{thm:intro-main}. We conjecture that the optimal dependence is polynomial or even linear. This holds in the case of the predicates $P_{m,b}$ of \Cref{thm:intro-blr}, and also for many functional predicates (see~\cite[Appendix D]{CFMMS2022} for an illustrative example).
\end{enumerate}

\addcontentsline{toc}{section}{Bibliography}

\bibliographystyle{alpha}
\bibliography{biblio}

\end{document}